\DeclarePairedDelimiter\norm{\lVert}{\rVert}%
\DeclareMathOperator{\id}{id}
\theoremstyle{plain}
\newtheorem{thm}{Theorem} %Hier und im Folgendem theorem -->thm
\newtheorem{prop}[thm]{Proposition}
\newtheorem{corollary}[thm]{Corollary}
\newtheorem{lemma}[thm]{Lemma}
\theoremstyle{definition}
\newtheorem{dfn}[thm]{Definition}
\newtheorem{example}[thm]{Example}
\theoremstyle{remark}
\newtheorem{remark}[thm]{Remark}
\newcommand{\bigslant}[2]{{\raisebox{.2em}{$#1$}\left/\raisebox{-.2em}{$#2$}\right.}}
\newcommand{\be}{\begin{equation}}
\newcommand{\ee}{\end{equation}}
\newcommand{\bdm}{\begin{displaymath}}
\newcommand{\edm}{\end{displaymath}}
\newcommand{\ba}[1]{\begin{array}{#1}}
\newcommand{\ea}{\end{array}}
\newcommand{\bcen}{\begin{center}}
\newcommand{\ecen}{\end{center}}
\newcommand{\btab}{\begin{tabular}}
\newcommand{\etab}{\end{tabular}}
\newcommand{\ra}{\rightarrow}
\newcommand{\vertiii}[1]{{\left\vert\kern-0.25ex\left\vert\kern-0.25ex\left\vert #1 
		\right\vert\kern-0.25ex\right\vert\kern-0.25ex\right\vert}}
\newcommand{\PM}{\ensuremath{\mathcal{P}}} % Menge der W'Maße
\newcommand{\B}{\ensuremath{\mathcal{B}}} % Borel-Sigma-Algebra
\newcommand{\R}{\ensuremath{\mathbb{R}}}
\newcommand{\N}{\ensuremath{\mathbb{N}}}
\title{Differentiable maps between Wasserstein spaces}
\date{October 5, 2020}	% Here you can change the date presented in the paper title
\author{
	Bernadette Lessel \\
	Max Planck Institute for the History of Science\\
	Berlin, Germany \\
	\texttt{blessel@mpiwg-berlin.mpg.de} \\
	\And
	Thomas Schick \\
	Mathematical Institute\\
	University of Göttingen\\
	\texttt{thomas.schick@math.uni-goettingen.de} \\
}
\begin{document}
\maketitle

\begin{abstract}
	A notion of differentiability for maps $F:W_2(M)\longrightarrow W_2(N)$ between Wasserstein spaces of order 2 is being proposed, where $M$ and $N$ are smooth, connected and complete Riemannian manifolds.
	Due to the nature of the tangent space construction on Wasserstein spaces, we only give a global definition of differentiability, i.e.\ without a prior notion of pointwise differentiability. With our definition, however, we recover the expected properties of a differential. Special focus is being put on differentiability properties of maps of the form $F=f_\#$, $f:M\longrightarrow N$ and on convex mixing of differentiable maps, with an explicit construction of the differential.
\end{abstract}

% keywords can be removed
%\keywords{Wasserstein spaces \and Optimal transport}

\tableofcontents

\section{Introduction}
%---------------------
Fundamental work has been done on the weak Riemannian manifold structure and second order analysis on Wasserstein spaces $W_2(M)$, most notably by Felix Otto \cite{Otto2001}, John Lott \cite{Lott_2007} and Nicola Gigli \cite{Gigli}. However, to our knowledge, no notion of differentiability for maps between Wasserstein spaces has been proposed in the literature yet. 

We begin with a reminder of Wasserstein spaces and its weak differentiable structure, to motivate the definitions we make later on. 
Our notion of differentiability for maps $F:W_2(M)\rightarrow W_2(N)$ between Wasserstein spaces is a global one, in the sense that it does not use a pointwise notion of differentiability. It seems to be the case that the latter is not possible in an immediate way due to the way tangent spaces are constructed in Wasserstein geometry: The basis for talking about tangent vectors along curves in $W_2(M)$ is constituted by the weak continuity equation $\partial_t\mu_t+\nabla\cdot(v_t\mu_t)=0$, which can be seen as a differential characterization of absolutely continuous curves in $W_2(M)$ (see Theorem \ref{them.ac}). The curve of minimal vector fields $v_t$ that solves the continuity equation for an absolutely continuous curve $\mu_t$ is then seen as being tangential along $\mu_t$. However, $v_t$ is only defined for almost every $t$, so that a pointwise evaluation is not meaningful and therefore undermines the definition of a pointwise notion of differentiability in our approach. The differential of a map is, however, defined in a pointwise manner. 

Our account on differentiable maps between Wasserstein spaces begins with the definition of \emph{absolutely continuous maps} which map absolutely continuous curves to absolutely continuous curves. This definition is made in analogy to the theorem in differential geometry that a map $f:M\rightarrow N$ is differentiable if and only if it maps differentiable curves to differentiable curves. Absolutely continuous maps serve as a pre-notion to differentiability. 
An absolutely continuous map $F:W_2(M)\rightarrow W_2(N)$ is then said to be \emph{differentiable} if every $\mu\in W_2(M)$ there exists a bounded linear map $dF_{\mu}$ between the tangent space at $\mu$ and the tangent space at $F(\mu)$ such that for every absolutely continuous curve $\mu_t$ the image curve $dF_{\mu_t}(v_t)$ of the curve of tangent vector fields $v_t$ along $\mu_t$ is a curve of tangent vector fields along $F(\mu_t)$ (Definition \ref{dfn.diff}). The collection of all these $dF_{\mu}$, in the sense of a bundle map between tangent bundles, is then called the \emph{differential} $dF$ of $F$.

We show that $dF$ unique up to a redefinition on a \emph{negligible} set. Also, the usual properties of the differential are derived, such as the expected differential of the constant and of the indentity mapping, also of the composition of two differentiable maps and of the inverse of a differentiable map. 

Special attention is payed to maps of the form $F=f_\#$, where measures are mapped to their image-measure with respect to $f:M\rightarrow N$, $f$ being smooth and proper and where $\sup_{x\in M}\norm{df_x}<\infty$. Maps of this kind are absolutely continuous, and an explicit formula is derived for a curve of vector fields satisfying the continuity equation together with $F(\mu_t)$, where $\mu_t$ is absolutely continuous.  
Unfortunately, it is not true in general that this curve of vector fields is actually tangent to $\mu_t$, i.e. minimal. To enforce that, one can, however, apply a projector onto the respective tangent spaces, for almost every $t$, which in particular guarantees the existence of a differential for $F$.

Further focus is being put on the treatment of differentiability properties of convex mixings of maps between Wasserstein spaces, as they provide a class of non-trivial maps which are not given by a pushfoward of measures.

For background knowledge on Wasserstein geometry and optimal transport we refer to \cite{Ambrosio2013} and \cite{Vilbig}.

%-------------------------------------------
\section{Wasserstein geometry}\label{ch.was}
%-------------------------------------------
%
Wasserstein geometry is a dynamical structure on Wasserstein spaces, which basically are sets of probability measures together with the Wasserstein distance. 

Let thus $(X,d)$ be a Polish space, where $d$ metrizes the topology of $X$, and $\PM(X)$ the set of all probability measures on $X$ with respect to the Borel $\sigma$-algebra $\B(X)$. Instead of $(X,d)$ we will often just write $X$. A measurable map between two Polish spaces $T:X\rightarrow Y$ induces a map between the respective spaces of probability measures via the \emph{pushforward} $T_\#$ of measures: $T_\#:\PM(X)\rightarrow\PM(Y)$, $\mu\mapsto T_\#\mu$, where $T_\#\mu(A):=\mu(T^{-1}(A))$, for $A\in\B(Y)$. The \emph{support} of a measure $\mu$ is defined by $supp(\mu):=\{x\in X\mid \text{ every open neighbourhood of $x$ has positive $\mu$-measure}\}$. The Lebesgue measure on $\R^n$ is denoted by $\lambda$. 
%
%------------------------------------
\subsection{Wasserstein spaces $W_p(X)$}
%------------------------------------
%

We denote the set of probability measures which have finite $p$-th moment by $\PM_p(X)$, where $p\in[1,\infty)$:
$$\PM_p(X):=\{\mu\in\PM(X)\ \mid \int_X d^p(x_0,x)\ d\mu(x)<\infty\}.$$

\noindent Note that $\PM_p(X)$ is independent of the choice of $x_0\in X$.
Furthermore, we define
\begin{equation}\nonumber
Adm(\mu,\nu):=\lbrace \gamma\in\PM(X\times Y)\mid\pi^X_\#\gamma=\mu, \pi^Y_\#\gamma=\nu\rbrace,
\end{equation}
the so called \emph{admissible transport plans} between $\mu$ and $\nu$. Here, $\pi^X:X\times Y\rightarrow X$, $\pi^X(x,y)=x$, similarly $\pi^Y$.

\begin{dfn}[\emph{\textbf{Wasserstein distances and Wasserstein spaces}}]
	Let $(X,d)$ be a Polish space and $p\in[0,\infty)$, then
	\begin{eqnarray*}\nonumber
		W_p: \PM_p(X)\times\PM_p(X)&\rightarrow& X\\\nonumber
		(\mu,\nu) &\mapsto& \left(\inf_{\gamma\in Adm(\mu,\nu)}\int_{X\times X}d^p(x,y)\ d\gamma(x,y)\right)^{1/p}
	\end{eqnarray*}
	is called the \emph{{$p$-th Wasserstein distance}}, or Wasserstein distance of order $p$. The tuple $(\PM_p(X),W_p)$ is called \emph{Wasserstein space} and is denoted by the symbol $W_p(X)$. 
\end{dfn}
The fact that $W_p$ is indeed a metric distance is a problem treated in optimal transport, where it is established that a minimizer for $$\inf_{\gamma\in Adm(\mu,\nu)}\int_{X\times X}d^p(x,y)\ d\gamma(x,y)$$ actually exists. Such a minimizer is called \emph{optimal transport plan}. In case a plan $\gamma\in Adm(\mu,\nu)$ is induced by a measurable map $T:X\rightarrow Y$, i.e. in case $\gamma=(Id,T)_\#\mu$, $T$ is called \emph{transport map}. Then, $T_\#\mu=\nu$.

One can show that $W_p(X)$ is complete and separable. Furthermore, $W_p$ metrizes the weak convergence in $\PM_p(X)$.

\begin{dfn}[\emph{\textbf{Weak convergence in $\PM_p(X)$}}]
	A sequence $(\mu_k)_{k\in\N}\subset\PM(X)$ is said to \emph{converge weakly} to $\mu\in\PM_p(X)$ if and only if $\int\varphi d\mu_k\ra\int\varphi d\mu$ for any bounded continuous function $\varphi$ on $X$. This is denoted by $\mu_k\rightharpoondown\mu$.
	A sequence $(\mu_k)_{k\in\N}\subset\PM_p(X)$ is said to converge weakly to $\mu\in\PM_p(X)$ if and only if for $x_0\in X$ it is:
	\begin{itemize}
		\item[1)] $\mu_k\rightharpoondown\mu$ and
		\item[2)] $\int d^p(x_0,x) d\mu_k(x)\ra\int d^p(x_0,x) d\mu(x)$.
	\end{itemize}
	This is denoted by $\mu_k\rightharpoonup\mu$.
\end{dfn}
An important class of curves in Wasserstein space that we will need later on are constant speed geodesics.
\begin{dfn}[\emph{\textbf{Constant speed geodesic}}]\label{df.geod}
	A curve $(\gamma_t)_{t\in [0,1]}$, $\gamma_0\neq\gamma_1$, in a metric space $(X,d)$ is called a \emph{constant speed geodesic} or \emph{metric geodesic} in case that
	\begin{equation}\label{eq.geod}
	d(\gamma_t,\gamma_s)=|t-s|d(\gamma_0,\gamma_1)\ \ \forall t,s\in [0,1]. 
	\end{equation}
\end{dfn}

\noindent We will often abbreviate curves $(\gamma_t)_{t\in [0,1]}$ by writing $\gamma_t$ instead.
\begin{dfn}[\emph{\textbf{Geodesic space}}]
	A metric space $(X,d)$ is called \emph{geodesic} if for every $x,y\in X$ with $x\neq y$, there exists a constant speed geodesic $\gamma_t$ with $\gamma_0=x$ and $\gamma_1=y$.
\end{dfn}
If $(X,d)$ is geodesic, then $W_2(X)$ is geodesic as well (\cite{Ambrosio2013}).
%
%------------------------------------------------------------------
\subsection{The continuity equation on $W_2(M)$}\label{sec.dyn}
%------------------------------------------------------------------
%
In the upcoming section, we will only be concerned with $W_2(M)$, where $M$ is a smooth, connected and complete Riemannian manifold with Riemannian metric tensor $h$ and associated Riemannian measure $\mu$. We will often write $W(M)$ instead of $W_2(M)$.
Furthermore, we equip the set of measurable sections of $TM$, which we will denote by $\Gamma(TM)$, with an $L^2$-topology. That means, for $v\in \Gamma(TM)$ we define
\begin{equation*}
	\|v\|_{L^2(\mu)}:=\sqrt{\int_M h(v,v)\ d\mu}
\end{equation*}
and  \begin{equation*}
	L^2(TM,\mu):=\bigslant{\{v\in \Gamma(TM)\mid \|v\|_{L^2(\mu)} <\infty\}}{\sim}.
\end{equation*}
Here, two vector fields are considered to be equivalent in case they differ only on a set of $\mu$-measure zero. $L^2(TM,\mu)$ is a Hilbert space with the canonical scalar product. We will often write $L^2(\mu)$ if it is clear to which manifold $M$ it is referred to.

The (infinite dimensional) manifold structure that is commonly used on $W(M)$ is not a smooth structure in the sense of e.g. \cite{Kriegl_1997} where infinite dimensional manifolds are modeled on convenient vector spaces. The differentiable structure on $W(M)$, that will be introduced below, rather consists of ad hoc definitions accurately tailored to optimal transport and the Wasserstein metric structure which only mimic conventional differentiable and Riemannian behavior.

Instead of starting with a smooth manifold structure, on Wasserstein spaces one starts with the notion of a tangent space. 
Traditionally, the basic idea of a tangent vector at a given point is that it indicates the direction a (smooth) curve will be going infinitesimally from that point. Then, the set of all such vectors which can be found to be tangent to some curve at a given fixed point are collected in the tangent space at that point. On $W(M)$, however, there is no notion of smooth curves. But there is a notion of metric geodesics. In case the transport plan for the optimal transport between two measures is induced by a map $T$, the interpolating geodesic on Hilbert spaces can be written as $\mu_t=((1-t)Id+tT)_\#\mu_0$, thus being of the form $\mu_t=F_{t\#}\mu_0$. More generally, on Riemannian manifolds optimal transport between $\mu_0$ and $\mu_t$ can be achieved by $\mu_t=F_{t\#}\mu_0$, $F_t=\exp(t\nabla\varphi)$( see e.g \cite{Vilbig}, Chapter 12). In these cases, $F_t$ is injective and locally Lipschitz for $0<t<1$ (\cite{Vilsmall}, Subsubsection 5.4.1).
It is known from the theory of characteristics for partial differential equations that curves of this kind solve the weak continuity equation, together with the vector field to which integral lines $F_t$ corresponds. 
\begin{dfn}[\emph{\textbf{Continuity equation}}]\label{dfn.conti}
	Given a family of vector fields $(v_t)_{t\in [0,T]}$, a curve $\mu_t:[0,T]\rightarrow W_2(M)$ is said to solve the \emph{weak continuity equation}
	\begin{equation}\label{eq.cont}
	\partial_t\mu_t+\nabla\cdot(v_t\mu_t)=0,
	\end{equation}
	if
	\begin{equation}\label{eq.wcont}
	\int_0^T\int_M\left(\frac{\partial}{\partial t}\varphi(x,t)+h(\nabla\varphi(x,t),v_t(x))\right)\ d\mu_t(x)dt =0
	\end{equation}
	holds true for all $\varphi\in C_c^\infty\left((0,T)\times M\right)$.
\end{dfn}
\begin{thm}[\emph{\textbf{\cite{Vilsmall}, Theorem 5.34}}]\label{thm.conti} % p.167
	Let $(F_t)_{t\in[0,T)}$ be a family of maps on $M$ such that $F_{t}:M\rightarrow M$ is a bijection for every $t\in[0,T)$, $F_0=Id$ and both $(t,x)\mapsto F_t(x)$ and $(t,x)\mapsto F_t^{-1}(x)$ are locally Lipschitz on $[0,T)\times M$. Let further $v_t(x)$ be a family of velocity fields on $M$ such that its integral lines correspond to the trajectories $F_t$, and $\mu$ be a probability measure. Then $\mu_t=F_{t\#}\mu$ is the unique weak solution in $\mathcal{C}\left([0,T),\PM(M)\right)$ of $\frac{d}{dt}\mu_t+\nabla\cdot(v_t\mu_t)=0$ with initial condition $\mu_0=\mu$. Here, $\PM(M)$ is equipped with the weak topology.
\end{thm}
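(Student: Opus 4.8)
The plan is to prove existence and uniqueness separately, in both cases exploiting that the trajectories $F_t$ are precisely the characteristic curves of $v_t$: the hypothesis that the integral lines of $v_t$ correspond to the $F_t$ means that $\frac{d}{dt}F_t(y)=v_t(F_t(y))$ for every $y$ and almost every $t$ (the exceptional $t$-set is harmless since $t\mapsto F_t(y)$ is locally Lipschitz, hence differentiable a.e.).

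For existence I would insert $\mu_t=F_{t\#}\mu$ into the weak formulation \eqref{eq.wcont} and change variables through the pushforward, turning every spatial integral $\int_M g\,d\mu_t$ into $\int_M g\circ F_t\,d\mu$. The integrand then reads
\[
\partial_t\varphi(F_t(y),t)+h\big(\nabla\varphi(F_t(y),t),v_t(F_t(y))\big),
\]
and the key observation is that, by the chain rule together with the characteristic ODE, this is exactly $\frac{d}{dt}\big[\varphi(F_t(y),t)\big]$. Since $\varphi$ is smooth and $t\mapsto F_t(y)$ is locally Lipschitz, the composition $t\mapsto\varphi(F_t(y),t)$ is absolutely continuous, so the chain rule is valid a.e.; applying Fubini's theorem (legitimate because $\varphi$ has compact support, so the integrand is bounded and measurable) and then the fundamental theorem of calculus in $t$, the double integral collapses to boundary terms at $t=0$ and $t=T$ which vanish, as $\varphi$ is compactly supported in the open interval $(0,T)$. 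This shows $\mu_t$ is a weak solution, and continuity of $t\mapsto F_{t\#}\mu$ in $\PM(M)$ follows from the local Lipschitz continuity of $F_t$ together with dominated convergence, giving membership in $\mathcal{C}([0,T),\PM(M))$ with $\mu_0=\mu$.

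For uniqueness I would use a duality (pull-back) argument. Let $\nu_t$ be any weak solution with $\nu_0=\mu$ and define $\tilde\nu_t:=(F_t^{-1})_\#\nu_t$; the goal is $\tilde\nu_t\equiv\mu$. Fixing $g\in C_c^\infty(M)$ and setting $h_t(x):=g(F_t^{-1}(x))$, the same computation as above shows $h_t(F_t(y))=g(y)$ is constant in $t$, so $h_t$ solves the dual transport equation $\partial_t h_t+h(\nabla h_t,v_t)=0$ almost everywhere. Testing the continuity equation against a temporal cutoff of $h_t$, the spatial term cancels against $\partial_t h_t$, leaving only a total time derivative; this yields $\frac{d}{dt}\int_M g\,d\tilde\nu_t=0$ in the distributional sense on $(0,T)$. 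Since $t\mapsto\nu_t$ is continuous and $F_0=\Id$ gives $\tilde\nu_0=\nu_0=\mu$, the map $t\mapsto\int_M g\,d\tilde\nu_t$ is constant and equal to $\int_M g\,d\mu$; as $g$ is arbitrary this forces $\tilde\nu_t\equiv\mu$, i.e.\ $\nu_t=F_{t\#}\mu$.

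The main obstacle lies entirely in this last step: because $F_t$ is only bi-Lipschitz rather than $C^1$, the function $h_t(x)=g(F_t^{-1}(x))$ is merely Lipschitz in $x$ and $t$, hence not an admissible test function in \eqref{eq.wcont}, and the identity $\frac{d}{dt}\int_M h_t\,d\nu_t=\int_M(\partial_t h_t+h(\nabla h_t,v_t))\,d\nu_t$ is not immediately justified. To make it rigorous I would mollify $h_t$ (and the temporal cutoff) to obtain genuine $C_c^\infty$ test functions and control the resulting commutator error as the mollification parameter tends to zero; the Lipschitz regularity guarantees uniform bounds on $\nabla h_t$ and $\partial_t h_t$, so the error vanishes in the limit. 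Alternatively one may simply invoke the classical well-posedness of the continuity equation for (locally) Lipschitz velocity fields. This approximation is the only genuinely technical point; everything else is a direct change of variables driven by the characteristic identity $\frac{d}{dt}F_t=v_t\circ F_t$.
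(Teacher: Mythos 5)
The paper itself contains no proof of this statement: Theorem \ref{thm.conti} is quoted verbatim from \cite{Vilsmall}, so your attempt can only be judged against the statement, not against an internal argument. Your existence half is correct and standard: changing variables through the pushforward, recognizing the integrand as $\frac{d}{dt}\bigl[\varphi(F_t(y),t)\bigr]$ via the characteristic identity, and concluding by absolute continuity of $t\mapsto\varphi(F_t(y),t)$, Fubini, and compact support is exactly the method of characteristics, and weak continuity of $t\mapsto F_{t\#}\mu$ by dominated convergence is fine.

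The uniqueness half, however, has a genuine gap, located precisely at the step you yourself flag as ``the only genuinely technical point'', and your proposed fix does not close it. After mollifying $h_t=g\circ F_t^{-1}$, the error is the commutator
\begin{equation*}
\int\bigl(v_t(x)-v_t(x-y)\bigr)\cdot\nabla h_t(x-y)\,\rho_\epsilon(y)\,dy ,
\end{equation*}
integrated against $d\nu_t\,dt$. Uniform bounds on $\nabla h_t$ and $\partial_t h_t$ make this \emph{bounded}, not vanishing: it tends to $0$ only where $v_t$ is continuous, hence only Lebesgue-a.e., while the competitor solution $\nu_t$ is an arbitrary measure, possibly singular, which may charge exactly the bad set. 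The hypotheses give no spatial regularity of $v_t$ at all (only the flow maps are Lipschitz; $v_t$ is merely locally bounded and measurable), so this cannot be repaired by a finer estimate. Indeed the gap is structural, not technical: your reading of the hypothesis --- ``$\frac{d}{dt}F_t(y)=v_t(F_t(y))$ for every $y$ and a.e.\ $t$'' --- is too weak for the theorem to be true. Take $M=\R^2$, $F_t=\Id$, and $v_t(x)=e_1$ if $x_1=t$, $v_t(x)=0$ otherwise. Every constant curve is an integral curve (the exceptional time set is a single point), so your hypothesis holds; yet $\nu_t=\delta_{(t,0)}$ is a weakly continuous weak solution with $\nu_0=\delta_0=F_{0\#}\delta_0$, distinct from $F_{t\#}\delta_0=\delta_0$. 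Hence uniqueness fails under your reading, and any correct proof must use the converse direction --- \emph{every} integral curve of $v_t$ is a flow trajectory --- which your argument never invokes; the standard way to exploit it is Ambrosio's superposition principle, representing an arbitrary nonnegative weak solution as a superposition of integral curves and then identifying them all with flow lines. Your fallback of ``invoking classical well-posedness for (locally) Lipschitz velocity fields'' is doubly unavailable: $v_t$ is not assumed Lipschitz here, and that well-posedness assertion is essentially the theorem being proved.
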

%
	% A uniformly Lipschitz vector field generates a flow which is (globally) Lipschitz.
	
It is possible to characterize the class of curves on $W(M)$ that admit a velocity in the manner of Definition \ref{dfn.conti} (\cite{Ambrosio2013}) in the following way. 
\begin{dfn}[\emph{\textbf{Absolutely continuous curve}}]
Let $(E,d)$ be an arbitrary metric space and $I$ an interval in $\R$. A function $\gamma:I\rightarrow E$ is called \emph{absolutely continuous (a.c.),} if there exists a function $f\in L^1(I)$ such that 
\begin{equation}\label{eq.ac}
d(\gamma(t),\gamma(s))\leq\int_t^s f(r)dr,\ \ \ \forall s,t\in I, t\leq s.
\end{equation}
\end{dfn}
\begin{dfn}[\emph{\textbf{Metric derivative}}] \label{df.md}
The \emph{metric derivative} $|\dot{\gamma}|(t)$ of a curve $\gamma:[0,1]\rightarrow E$ at $t\in (0,1) $ is given as the limit
\begin{equation}
|\dot{\gamma}|(t)=\lim_{h\rightarrow 0}\frac{d(\gamma(t+h),\gamma(t))}{|h|}.
\end{equation}
\end{dfn}
Every constant speed geodesic is absolutely continuous and $|\dot{\gamma}|(t)=d(\gamma(0),\gamma(1))$.

\noindent It is known that for absolutely continuous curves $\gamma$, the metric derivative exists for a.e. $t$. It is an element of $L^1(0,1)$ and, up to sets of zero Lebesgue-measure, the minimal function satisfying equation \eqref{eq.ac} for $\gamma$. In this sense absolutely continuous functions enable a generalization of the fundamental theorem of calculus to arbitrary metric spaces.

\begin{thm}[\emph{\textbf{Differential characterization of a.c. curves}}]\label{them.ac}
Let $\mu_t:[0,1]\rightarrow W_2(M)$ be an a.c.~curve. Then there exists a Borel family of vector fields $(v_t)_{t\in[0,1]}$ on $M$ such that the continuity equation \eqref{eq.wcont} holds and 
\begin{equation*}
\|v_t\|_{L^2(\mu_t)}\leq |\dot{\mu_t}| \text{  for a.e. } t\in(0,1).
\end{equation*}
Conversely, if a curve $\mu_t:[0,1]\rightarrow W_2(M)$ is such that there exists a Borel family of vector fields $(v_t)_{t\in[0,1]}$ with $\|v_t\|_{L^2(\mu_t)}\in L^1(0,1)$, together with which it satisfies \eqref{eq.wcont}, then there exists an a.c. curve $\tilde{\mu}_t$ being equal to $\mu_t$ for a.e. $t$ and satisfying
\begin{equation*}
|\dot{\tilde{\mu}}_t|\leq\|v_t\|_{L^2(\tilde{\mu}_t)} \text{  for a.e. } t\in(0,1).
\end{equation*} 
\qed
\end{thm}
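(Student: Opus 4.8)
The plan is to treat the two implications separately, following the duality strategy of the reference \cite{Ambrosio2013}.

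For the forward direction I would construct the velocity field by duality on gradients. Since $\mu_t$ is a.c., for a time-independent test function $\psi\in C_c^\infty(M)$ the map $t\mapsto\int_M\psi\,d\mu_t$ is absolutely continuous (its distributional derivative, read off from \eqref{eq.wcont} with $\varphi(x,t)=\psi(x)\eta(t)$, is controlled as below), so it is differentiable for a.e.\ $t$. For such $t$ I would define a linear functional on the space of gradients $\{\nabla\psi\}$ sitting inside $L^2(TM,\mu_t)$ by $L_t(\nabla\psi):=\frac{d}{dt}\int_M\psi\,d\mu_t$. The crucial estimate is that $L_t$ is bounded by the metric derivative,
\begin{equation*}
\left|\frac{d}{dt}\int_M\psi\,d\mu_t\right|\leq|\dot\mu_t|\left(\int_M|\nabla\psi|^2\,d\mu_t\right)^{1/2},
\end{equation*}
which I would obtain by writing $\int\psi\,d\mu_{t+h}-\int\psi\,d\mu_t=\int(\psi(y)-\psi(x))\,d\gamma_h(x,y)$ for an optimal plan $\gamma_h$ between $\mu_t$ and $\mu_{t+h}$, bounding $\psi(y)-\psi(x)$ by $\int_0^1 h(\nabla\psi,\dot c)$ along connecting geodesics, applying Cauchy--Schwarz, dividing by $h$, and using $W_2(\mu_t,\mu_{t+h})/|h|\to|\dot\mu_t|$. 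A Riesz/Hahn--Banach representation then yields a unique $v_t$ in the closure of gradients with $\|v_t\|_{L^2(\mu_t)}=\|L_t\|\leq|\dot\mu_t|$, and by construction $v_t$ satisfies \eqref{eq.wcont}. The one remaining point of care is a measurable selection to guarantee that $t\mapsto v_t$ is Borel.

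For the converse I would note first that, as above, \eqref{eq.wcont} forces $t\mapsto\int_M\psi\,d\mu_t$ to be absolutely continuous for every $\psi\in C_c^\infty(M)$, since its derivative $\int_M h(\nabla\psi,v_t)\,d\mu_t$ is bounded by $\|\nabla\psi\|_\infty\,\|v_t\|_{L^2(\mu_t)}\in L^1(0,1)$; this produces a weakly continuous representative $\tilde\mu_t$. The heart of the argument is the estimate
\begin{equation*}
W_2(\tilde\mu_s,\tilde\mu_t)\leq\int_s^t\|v_r\|_{L^2(\tilde\mu_r)}\,dr,
\end{equation*}
which simultaneously gives absolute continuity (taking $f=\|v_r\|_{L^2(\tilde\mu_r)}$ in \eqref{eq.ac}) and the desired bound $|\dot{\tilde\mu}_t|\leq\|v_t\|_{L^2(\tilde\mu_t)}$. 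To prove it I would mollify $(\mu_t,v_t)$ into smooth data $(\mu_t^\epsilon,v_t^\epsilon)$ still solving a continuity equation, integrate the smooth flow $X_t^\epsilon$ of $v_t^\epsilon$ so that $\mu_t^\epsilon=(X_t^\epsilon)_\#\mu_s^\epsilon$, and use this transport map as an admissible plan to obtain $W_2(\mu_s^\epsilon,\mu_t^\epsilon)\leq\int_s^t\|v_r^\epsilon\|\,dr$, then pass to the limit via lower semicontinuity of $W_2$ under weak convergence together with $\limsup_{\epsilon\to0}\|v_r^\epsilon\|_{L^2(\mu_r^\epsilon)}\leq\|v_r\|_{L^2(\mu_r)}$ (convexity of the kinetic action).

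The main obstacle is the mollification step in the converse direction: on a general Riemannian manifold $M$ there is no translation-invariant convolution, so the smoothing must be localized through charts and a partition of unity (or carried out via the heat semigroup), the extra error terms introduced into the continuity equation must be controlled uniformly, and the lower-semicontinuity passage to the limit must be justified. By comparison, the differentiation estimate and the measurable selection in the forward direction are comparatively routine.
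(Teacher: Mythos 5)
The paper itself gives no proof of this theorem: it is imported verbatim from \cite{Ambrosio2013} and stated with the proof omitted, so the only meaningful comparison is with that reference's argument. Your sketch reproduces exactly that standard argument --- the duality-on-gradients construction with the estimate $\bigl|\tfrac{d}{dt}\int\psi\,d\mu_t\bigr|\le|\dot\mu_t|\,\bigl(\int|\nabla\psi|^2\,d\mu_t\bigr)^{1/2}$ and Riesz representation in the forward direction, and the regularization/flow estimate $W_2(\tilde\mu_s,\tilde\mu_t)\le\int_s^t\|v_r\|_{L^2(\tilde\mu_r)}\,dr$ for the converse --- and you correctly identify the one genuine technical subtlety (smoothing the pair $(\mu_t,v_t)$ on a manifold rather than in $\R^n$, where one can alternatively invoke an isometric embedding or the superposition principle), so the proposal is consistent with the proof the paper relies on.
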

%
%Starting from geodesics, one arrives this way at the larger class of absolutely continuous curves on $W(M)$ and we would like to think about the vector fields $v_t$ as being tangential to $\mu_t$. To relate this to the standard notion of tangent vectors, one can show that if  $\mu_t$ is such that there is a smooth curve $\gamma:I\rightarrow \R^n$ and $\mu_t=\delta_{\gamma(t)}$, then $\mu_t$ solves the continuity equation in the weak sense with $v_t$ being such that $v_t(x)=\dot{\gamma}(t)$ in case $x=\gamma(t)$.
%
%---------------------------------------------------
\subsection{The tangent space $T_\mu W(M)$}
%---------------------------------------------------
As seen in Theorem \ref{them.ac}, every absolutely continuous curve in $W(M)$ admits an $L^1(dt)$-family of $L^2(\mu_t)$-vector fields $v_t$, i.e. $\|v_t\|_{L^2(\mu_t)}\in L^1(0,1)$, together with which the continuity equation is satisfied. In the following, we will call every such pair $(\mu_t,v_t)$ an \emph{a.c. couple}. We further want to call $v_t$ an \emph{accompanying vector field} for $\mu_t$.

Vector fields $v_t$ satisfying the continuity equation with a given $\mu_t$ are, however, not unique: there are many vector fields which allow for the same motion of the density: Adding another family $w_t$ with the ($t$-independent) property $\nabla(w_t \mu_t)=0$ to $v_t$  does not alter the equation. Theorem \ref{them.ac} provides a natural criterion to choose a unique element among the $v_t's$. According to this theorem, there is at least one $L^1(dt)$- family $v_t$ such that $|\dot{\mu}_t|=\|v_t\|_{L^2(\mu_t)}$ for almost all $t$, i.e. that is of minimal norm for almost all $t$. Linearity of \eqref{eq.ac} with respect to $v_t$ and the strict convexity of the $L^2$-norms ensure the uniqueness of this choice, up to sets of zero measure with respect to $t$. We want to call such a couple $(\mu_t,v_t)$, where $v_t$ is the unique minimal accompanying vector field for an a.c. curve $\mu_t$, a \emph{tangent couple}. 

It then seems reasonable to define the tangent space at point $\mu$ as the set of $v\in L^2(TM,\mu)$ with $\|v\|_\mu\leq\|v+w\|_\mu$ for all $w\in L^2(TM,\mu)$ such that $\nabla(w\mu)=0$. This condition for $v\in L^2(TM,\mu)$, however, is equivalent to saying that $\int_M h(v,w)\ d\mu=0$ for all $w\in L^2(TM,\mu)$ with $\nabla(w\mu)=0$. This in turn is equivalent to the following, which we will take as the definition of the tangent space.

\begin{dfn}[\emph{\textbf{Tangent space $T_\mu W(M)$}}] \label{dfn.tspace}
	The \emph{tangent space} $T_\mu W(M)$ at point $\mu\in W(M)$ is defined as
\begin{equation}\label{eq.tangentspace}
T_\mu W(M):=\overline{\{\nabla\varphi\mid \varphi\in \mathcal{C}^\infty_c(M)\}}^{L^2(TM,\mu)}\subset L^2(TM,\mu).
\end{equation}	
\end{dfn}

We also give the definition of the normal space:
\begin{eqnarray}\nonumber
T^\perp_\mu W(M) & := & \{w\in L^2(TM,\mu)\mid \int h(w,v)\ d\mu=0,\ \forall v\in T_\mu W(M)\}	\\\nonumber
& = & \{w\in L^2(TM,\mu)\mid \nabla(w\mu)=0\}.
\end{eqnarray}

\begin{remark}\label{cor.tan}
	If $(\mu_t,v_t)$ is an a.c. couple, then $(\mu_t, v_t)$ is a tangent couple if and only if $v_t\in T_{\mu_t}W(M)$ for almost every $t\in(0,1)$ (\cite{Gigli}, Proposition 1.30).
\end{remark}
It is not difficult to see that $\text{dim }T_{\delta} W(M)= \text{dim }M$, for a Dirac measure $\delta$, whereas in most of the cases $\text{dim }T_\mu W(M)=\infty$.
In general, it can be shown that as long as $\mu$ is supported on an at most countable set, $T_{\mu} W(M)=L^2(TM,\mu)$ (see \cite{Gigli}, Remark 1.33). Morally, the more points are contained in the support of the measure, the bigger gets the dimension. On the other hand, every probability measure can be approximated by a sequence of measures with finite support (see \cite{Vilbig} Thm 6.18), so that in each neighborhood of every measure there is an element $\mu$ with $\text{dim }T_\mu W(M)<\infty$. 

\noindent We call the disjoint union of all tangent spaces, 
\begin{eqnarray*}
TW(M):=\bigsqcup_{\mu\in W(M)}T_\mu W(M)=\bigcup_{\mu\in W(M)}\{(\mu,v)\mid v\in T_\mu W(M)\},
\end{eqnarray*}
 the \emph{tangent bundle of $W(M)$}. Since we are not treating $W(M)$ as a traditional manifold with charts, $TW(M)$ cannot be equipped with a traditional tangent bundle topology. Also, due to the denseness of the probability measures with finite support, local triviality cannot be achieved. However, since there is a natural projection map $\pi:TW(M)\rightarrow W(M);\ (\mu,v)\mapsto \mu$, we can in principle still talk about \emph{sections} and \emph{bundle maps} on the pointwise level. Whereas the notion of a vector field - in this context it would effectively be a field of (equivalence classes of) vector fields - has not turned out to be useful so far, we will use the concept of a bundle map later. In this spirit, a \emph{bundle map} between tangent bundles of Wasserstein spaces $W(M)$ and $W(N)$ is a fiber preserving map $B:TW(M)\rightarrow TW(N)$ in the sense that together with a continuous map $F:W(M)\rightarrow W(N)$ the commutativity of the following diagram is satisfied:
 \begin{center}
\begin{tikzpicture}
 \matrix(m)[matrix of math nodes,row sep=3em, 
 column sep=2.5em, text height=1.5ex, text depth=0.25ex]
 {TW(M) & & TW(N)\\
   W(M) & &  W(N)\\};
 \path[->, font=\scriptsize]
 (m-1-1) edge node[auto] {$B$} (m-1-3)
         edge node[left] {$\pi^M$} (m-2-1)
 (m-1-3) edge node[auto] {$\pi^N$} (m-2-3)
 (m-2-1) edge node[auto] {$F$} (m-2-3);
\end{tikzpicture}
\end{center}
One could ask about the meaningfulness of the condition that $F$ should be
continuous since for $B$ the concept of contiuity does not makes sense. It is just that we require the preservation of as much structure as possible. In any case, we are mainly going to use this idea of a bundle map to make clear how we want to see our notion of a differential of a differentiable function $F:W(M)\rightarrow W(N)$.

On $W(M)$ one can furthermore define a (formal) Riemannian structure. Intuition comes from the following formula which is due to J.-D. Benamou and Y. Brenier (\cite{Benamou1999}). It shows that the Wasserstein distance $W_2$, having been defined through the, static, optimal transport problem, can be recovered by a dynamic formula, being reminiscent of the length functional on Riemannian manifolds, defining the Riemannian metric distance.
\begin{thm}[\emph{\textbf{Benamou-Brenier formula}}]\label{thm.bb}
Let $\mu,\ \nu \in\PM_2(M)$, then
\begin{equation}\label{eq.bb}
W(\mu,\nu)=\inf_{(\mu_t,v_t)}\int_0^1\|v_t\|_{L^2(\mu_t)}\ dt,
\end{equation}
where the infimum is taken among all a.c. couples $(\mu_t,v_t)$ such that $\mu_0=\mu$ and $\mu_1=\nu$.
\end{thm}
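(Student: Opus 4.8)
The plan is to establish the claimed identity by proving the two inequalities separately, in each case reducing the statement to the differential characterization of absolutely continuous curves in Theorem \ref{them.ac} together with the elementary properties of the metric derivative recorded after Definition \ref{df.md}.

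For the inequality $W(\mu,\nu)\le\inf_{(\mu_t,v_t)}\int_0^1\|v_t\|_{L^2(\mu_t)}\,dt$, I would start from an arbitrary a.c.\ couple $(\mu_t,v_t)$ with $\mu_0=\mu$ and $\mu_1=\nu$. Since $\mu_t$ is absolutely continuous, its metric derivative $|\dot\mu_t|$ exists for a.e.\ $t$, lies in $L^1(0,1)$ and is the minimal function satisfying \eqref{eq.ac}; in particular $W(\mu_0,\mu_1)\le\int_0^1|\dot\mu_t|\,dt$. The converse part of Theorem \ref{them.ac} supplies an absolutely continuous curve $\tilde\mu_t$ agreeing with $\mu_t$ for a.e.\ $t$ and satisfying $|\dot{\tilde\mu}_t|\le\|v_t\|_{L^2(\tilde\mu_t)}$ for a.e.\ $t$. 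Because two absolutely continuous (hence continuous) curves that coincide almost everywhere coincide everywhere, we have $\tilde\mu_t=\mu_t$ for all $t$, so $\tilde\mu_0=\mu$, $\tilde\mu_1=\nu$, and chaining the two bounds gives $W(\mu,\nu)\le\int_0^1\|v_t\|_{L^2(\mu_t)}\,dt$; taking the infimum over all admissible couples yields the inequality.

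For the reverse inequality $\inf_{(\mu_t,v_t)}\int_0^1\|v_t\|_{L^2(\mu_t)}\,dt\le W(\mu,\nu)$, I would exhibit a single competitor realizing the distance. Since $M$ is complete and connected it is a geodesic space, and hence $W_2(M)$ is geodesic as well, so there exists a constant speed geodesic $\mu_t$ joining $\mu$ to $\nu$. Such a geodesic is absolutely continuous with $|\dot\mu_t|=W(\mu,\nu)$ for every $t$, so the forward part of Theorem \ref{them.ac} furnishes a Borel family of vector fields $v_t$ solving the continuity equation with $\|v_t\|_{L^2(\mu_t)}\le|\dot\mu_t|=W(\mu,\nu)$ for a.e.\ $t$. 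This norm is bounded, hence in $L^1(0,1)$, so $(\mu_t,v_t)$ is a genuine a.c.\ couple joining $\mu$ to $\nu$, and $\int_0^1\|v_t\|_{L^2(\mu_t)}\,dt\le W(\mu,\nu)$. Combining the two inequalities proves the formula.

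The main point to watch is that, with Theorem \ref{them.ac} available as a black box, the formula is essentially a repackaging of that theorem together with the geodesic property of $W_2(M)$: no explicit construction of the optimal velocity field is required, and one does not even have to verify that the geodesic's accompanying field is minimal (tangent), since the opposite inequality forces equality. The only genuine subtlety is bookkeeping at the endpoints, namely ensuring that the a.e.\ redefinition of the curve in the converse direction of Theorem \ref{them.ac} does not move $\mu_0$ or $\mu_1$, which is handled by continuity. Were one to prove this without Theorem \ref{them.ac}, the reverse inequality would become the real obstacle, requiring one to produce the velocity field of the displacement interpolation (e.g.\ via $\mu_t=(\exp(t\nabla\varphi))_\#\mu$ and Theorem \ref{thm.conti}) and to check directly that its $L^2(\mu_t)$-norm equals the constant speed $W(\mu,\nu)$.
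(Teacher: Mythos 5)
You cannot really be compared against the paper's own argument, because there is none: Theorem \ref{thm.bb} is stated as an imported result, attributed to Benamou and Brenier \cite{Benamou1999}, and the paper never proves it. Judged on its own, your proof is correct, and it is the standard modern derivation (Ambrosio--Gigli--Savar\'e style) rather than the original Benamou--Brenier argument, which works on $\R^n$ via a convex-duality/fluid-mechanical reformulation and constructs the optimal velocity field explicitly. Your route reduces everything to Theorem \ref{them.ac}: for ``$\leq$'' you invoke the converse half, and the point you flag --- that the a.e.\ redefinition $\tilde\mu_t$ coincides with $\mu_t$ everywhere because both curves are continuous and agree almost everywhere --- is exactly the bookkeeping needed to keep the endpoints fixed; for ``$\geq$'' you use that $M$ (complete, connected) is geodesic by Hopf--Rinow, hence $W_2(M)$ is geodesic as recorded in the paper, and apply the forward half of Theorem \ref{them.ac} to a constant speed geodesic, rightly noting that tangency/minimality of the resulting field is irrelevant because the infimum ranges over all a.c.\ couples. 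What your approach buys is a short, chart-free proof valid on any complete connected Riemannian manifold, at the price of treating Theorem \ref{them.ac} as a black box, which is where the genuine analytic work (and, in the alternative route you sketch, Theorem \ref{thm.conti} and the structure of displacement interpolation) is hidden. One pedantic point: the paper's Definition \ref{df.geod} and its notion of geodesic space only guarantee constant speed geodesics between \emph{distinct} endpoints, so the case $\mu=\nu$ needs a one-line separate treatment (take the constant curve with $v_t\equiv 0$, so both sides vanish).
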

%
%Because of formula (\ref{eq.bb}), we want to interpret the expression $\int_0^1\|v_t\|_{L^2(\mu_t)}\ dt$ as the length of a curve $\mu_t$. It can be shown (as stated in \cite{Vilbig}, Chapter 13) that the infimum is achieved if and only if $\mu$ and $\nu$ allow for an optimal plan. The minimizing curve will then be a tangent couple where $\mu_t$ is a geodesic.
%
This resemblance of formulas thus inspires the following definition.
\begin{dfn}[\emph{\textbf{Formal Riemannian tensor on $W_2(M)$}}]\label{df.Riem}
	The \emph{formal Riemannian metric tensor} $H_\mu$ on $W(M)$ at point $\mu\in W(M)$ is defined as
	\begin{eqnarray} \nonumber
	H_\mu:T_\mu W(M)\times T_\mu W(M) &\rightarrow & \R \\ \nonumber
	(v,w)                    &\mapsto & \int_M h_x(v,w)\ d\mu(x).
	\end{eqnarray}
\end{dfn}
Indeed, since $\|v_t\|_{L^2(\mu_t)}=\sqrt{\int_M h(v,v)\ d\mu}=\sqrt{H_\mu(v,v)}$, we now have $W(\mu,\nu)=\inf_{(\mu_t,v_t)}\int_0^1 \sqrt{H_\mu(v_t,v_t)}\ dt.$
The tuple $(T_\mu W(M)),H_\mu)$ constitutes a Hilbert Space. 

Gigli \cite{Gigithesis} emphasizes that Definition \ref{dfn.tspace} does not allow for a traditional Riemannian structure on $W_2(M)$ since the natural exponential map $v\mapsto \exp_\mu(v):=(Id+v)_\#\mu$ has injectivity radius $0$ for every $\mu$.

%-------------------------------------------------------
\section{Differentiable maps between Wasserstein spaces}\label{sec.diff}
%-------------------------------------------------------
%
Since $W_2(M)$ and $W_2(N)$ are not manifolds in a traditional sense, to be able to talk about differentiability of maps $F:W_2(M)\rightarrow W_2(N)$ we cannot compose $F$ with charts and apply Euclidean calculus. Recall, therefore, that a map $f:M\rightarrow N$ is differentiable if and only if it maps differentiable curves to differentiable curves. 
%
%-----------------------------------
\subsection{Absolutely continuous maps}
%-----------------------------------
%
Having only a notion of absolutely continuous curves, which are metrically differentiable almost everywhere and which are at the foundation of the construction of tangent spaces at Wasserstein spaces, we start with the following definition.
\begin{dfn}[\emph{\textbf{Absolutely continuous map}}]\label{def.ac}
A map $F:W(M)\rightarrow W(N)$ is called \emph{absolutely continuous}, or, \emph{a.c.}, if the curve $F(\mu_t)\subset W(N)$ is absolutely continuous up to redefining $t\mapsto\mu_t$ on a zero set, whenever $\mu_t\subset W(M)$ is absolutely continuous.
\end{dfn}
 We want to build our notion of differentiable maps between Wasserstein spaces on this idea of absolutely continuous maps. Before we continue to do so, we first find some conditions under which maps are absolutely continuous. For this, we want to recall the notion of \emph{proper} maps.
\begin{dfn}[\emph{\textbf{Proper map}}]\label{df.propermap}
	A continuous map $f:X\rightarrow Y$ between a Hausdorff space $X$ and a locally compact Hausdorff space $Y$ is called \emph{proper}, if for all compact subsets $K\subset Y$, the preimage $f^{-1}(K)\subset X$ is compact in $X$. 
\end{dfn}
In the following we denote the operator norm of a linear map by $\norm{\cdot}$.
\begin{thm}\label{thm.ac}
Let $F:W(M)\rightarrow W(N)$ be given as $F(\mu)=f_\#\mu$, $f:M\rightarrow N$ being smooth and proper and such that $\sup_{x\in M}\norm{df_x}<\infty$.
Then $F$ is absolutely continuous and for every tangent couple $(\mu_t,v_t)$, the tuple $(F(\mu_t),dF_{\mu_t}(v_t))$ is an a.c. couple, where

\begin{equation}\label{eq.vf}
dF_{\mu_t}(v_t)_y:= \int_{f^{-1}({y})}df_x(v_{t,x})\ d\mu_t^y(x) 
\end{equation}

for almost every $t$ and for $y\in f(M)$. Here, $df_x:T_xM\rightarrow T_{f(x)}N$ denotes the differential of $f$ at the point $x$, $v_{t,x}$ means the vector field $v_t$ at the point $x\in M$ and the probability measures $\mu_t^y(x)$ are defined through the disintegration theorem, $d\mu_t(x)=d\mu_t^y(x)df_\#\mu_t(y)$ (see Appendix \ref{A.dis}).\footnote{Note that what in \ref{A.dis} appears as lower index $y$, now appears as upper index $y$ since here were are additionally dealing with the $t$-dependence of $\mu_t$.} For all $y\notin f(M)$, we set $dF_{\mu_t}(v_t)_y=0$.

\end{thm}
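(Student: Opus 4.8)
The plan is to prove the central claim---that for every tangent couple $(\mu_t,v_t)$ the tuple $(F(\mu_t),dF_{\mu_t}(v_t))$ is an a.c.\ couple---and to deduce absolute continuity of $F$ from it. First I would record that $F$ is well defined as a map $W(M)\ra W(N)$: the bound $C:=\sup_{x\in M}\norm{df_x}<\infty$ together with connectedness and completeness of $M$ makes $f$ globally $C$-Lipschitz, so $\int_N d_N^2(y_0,y)\,d(f_\#\mu)(y)=\int_M d_N^2(y_0,f(x))\,d\mu(x)<\infty$ whenever $\mu\in\PM_2(M)$. The reduction is then: if I can show that $(\nu_t,w_t):=(f_\#\mu_t,\,dF_{\mu_t}(v_t))$ satisfies the continuity equation \eqref{eq.wcont} on $N$ and that $\norm{w_t}_{L^2(\nu_t)}\in L^1(0,1)$, the converse half of Theorem \ref{them.ac} provides an a.c.\ curve agreeing with $F(\mu_t)$ for a.e.\ $t$, which simultaneously yields that $(F(\mu_t),w_t)$ is an a.c.\ couple and, since every a.c.\ curve admits a tangent couple, that $F$ is absolutely continuous in the sense of Definition \ref{def.ac}.

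For the continuity equation I would test against pulled-back functions. Given $\psi\in C_c^\infty((0,T)\times N)$, set $\varphi(x,t):=\psi(f(x),t)$; since $f$ is smooth and proper, $\varphi\in C_c^\infty((0,T)\times M)$ (the spatial support lies in $f^{-1}(\supp\psi)$, which is compact by properness). The time term transforms by the pushforward identity, $\int_M\partial_t\varphi\,d\mu_t=\int_N\partial_t\psi\,d\nu_t$. For the gradient term the chain rule gives $d\varphi_x=d\psi_{f(x)}\circ df_x$, hence $h^M(\nabla\varphi(x,t),v_{t,x})=h^N(\nabla\psi(f(x),t),df_x(v_{t,x}))$. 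Disintegrating $d\mu_t(x)=d\mu_t^y(x)\,d\nu_t(y)$ and using that $\nabla\psi(\cdot,t)$ is constant, equal to $\nabla\psi(y,t)$, along the fibre $f^{-1}(y)$, I can pull it out of the inner integral; the remaining fibre integral is exactly $w_t(y)$ from \eqref{eq.vf}. This shows $\int_M h^M(\nabla\varphi,v_t)\,d\mu_t=\int_N h^N(\nabla\psi,w_t)\,d\nu_t$, so the $N$-equation for $(\nu_t,w_t)$ reduces termwise to the $M$-equation for $(\mu_t,v_t)$, which holds.

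For the norm bound I would apply Jensen's inequality on each fibre (where $\mu_t^y$ is a probability measure and $v\mapsto\abs{v}^2$ is convex in the single vector space $T_yN$), giving $\abs{w_t(y)}^2\leq\int_{f^{-1}(y)}\abs{df_x(v_{t,x})}^2\,d\mu_t^y(x)$. Reassembling the disintegration and estimating $\abs{df_x(v_{t,x})}\leq\norm{df_x}\,\abs{v_{t,x}}\leq C\abs{v_{t,x}}$ yields $\norm{w_t}_{L^2(\nu_t)}\leq C\norm{v_t}_{L^2(\mu_t)}$, which lies in $L^1(0,1)$ because $(\mu_t,v_t)$ is an a.c.\ couple. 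I note that minimality of $v_t$ is never used, so the argument in fact works for any accompanying field $v_t$; the restriction to tangent couples is only for later convenience.

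The step I expect to be the main obstacle is the measure-theoretic bookkeeping around \eqref{eq.vf}. One must check that the vector-valued fibre integral defining $w_t(y)$ converges for $\nu_t$-a.e.\ $y$ (this follows from Fubini together with the $L^2$-bound, which forces $\int_{f^{-1}(y)}\abs{df_x(v_{t,x})}^2\,d\mu_t^y(x)<\infty$ for a.e.\ $y$), and, more delicately, that $(t,y)\mapsto w_t(y)$ is a genuine Borel family of vector fields so that the space-time integral in \eqref{eq.wcont} is meaningful. The latter requires the disintegration $\mu_t^y$ to depend jointly measurably on $(t,y)$, which I would obtain from the canonical (hence Borel) nature of disintegration combined with the measurable---indeed continuous---dependence of $\mu_t$ on $t$. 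The geometric content is otherwise clean, precisely because $\nabla\psi$ is fibrewise constant; the real work is in confirming that every integral written down is legitimate.
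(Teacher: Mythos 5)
Your proposal follows essentially the same route as the paper's proof: the reduction via the converse half of Theorem \ref{them.ac}, the $L^2$ bound via Jensen's inequality on the fibres together with $\sup_x\norm{df_x}<\infty$, and the verification of the continuity equation by testing against pulled-back functions $\varphi\circ f$ (compactly supported by properness) with the disintegration theorem converting the fibre integral back into an integral against $\mu_t$. The only additions are your remarks on well-definedness of $F$ on $\PM_2$ and on joint measurability of $(t,y)\mapsto w_t(y)$, which the paper passes over silently but which do not change the argument.
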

Although $df_x:T_xM\rightarrow T_{f(x)}M$ is well defined for every $x$ as a mapping between tangent spaces, it is not well defined as a mapping between vector fields as long as $f$ is not injective. We thus take the mean value over all the vectors $df_x(v_{t,x})$ as the image vector $dF_{\mu_t}(v_t)_y$ of the vector field $v_t$ at point $y$, where $x$ stands for the elements of the fiber $f^{-1}(y)$. In case $f$ is injective, $dF_\mu(v)$ reduces to $df(v)$ for every $\mu$, which then can be regarded as full-fledged vector field.

Our naming of the vector field along $F(\mu_t)$, $dF_{\mu_t}(v_t)$ is, of course, very suggestive. Indeed, since the map $(v,\mu)\mapsto dF_{\mu}(v)$ is linear in $v$, Theorem $\ref{thm.ac}$ supports a natural definition for a notion of differentiability for absolutely continuous maps $F$. However, before we give such a definition, we need to make some further preparatory observations. 
Let us first continue with proving Theorem \ref{thm.ac}.

\begin{proof}
Let $\mu_t$ be an a.c. curve. Using Theorem \ref{them.ac}, we want to prove that there exists a family of vector fields $(\tilde{v}_t)_{t\in[0,1]}$ with $\int_0^1\|\tilde{v}_t\|_{L^2(F(\mu_t))}\ dt<\infty$, such that $(F(\mu_t),\tilde{v}_t)$ is an a.c. couple.

Let $(v_t)_{t\in[0,1]}$ be the tangent vector field of $\mu_t$. For each $t$ for which $v_t\in T_{\mu_t}W(M)$ (i.e. almost everywhere) we define $dF_{\mu_t}(v_t)$ as in equation $\eqref{eq.vf}$. We will prove that $dF_{\mu_t}(v_t)$ is an example of such vector fields $\tilde{v}_t$ we are looking for. 

Let us first see that $\int_0^1\|dF_{\mu_t}(v_t)\|_{L^2(F(\mu_t))}\ dt<\infty$. Using the triangle inequality for Bochner integrals, Jensen's inequality, the disintegration theorem and H\"older's inequality (in this order), we have:

\begin{eqnarray}\nonumber
& & \int_0^1\|dF_{\mu_t}(v_t)\|_{L^2(F(\mu_t))}\ dt  =  \int_0^1\sqrt{\int_N\|dF_{\mu_t}(v_t)\|^2_{T_yN}\ dF(\mu_t)(y)}\ dt%\\\nonumber
\end{eqnarray}
\begin{eqnarray}\nonumber
                          &=& \int_0^1\sqrt{\int_N \|\int_{f^{-1}(y)}df_x(v_{t,x})\ d\mu^y_t(x)\|^2_{T_yN}\ df_\#\mu_t(y)}\ dt\\\nonumber
                          &\leq& \int_0^1\sqrt{\int_N\left(\int_{f^{-1}(y)}\|df_x(v_x)\|_{T_yN}\ d\mu_t^y(x)\right)^2 df_\#\mu_t(y)}\ dt\\\nonumber
                          &\leq& \int_0^1\sqrt{\int_N\int_{f^{-1}(y)}\|df_x(v_{t,x})\|^2_{T_yN}\ d\mu^y_t(x)\ df_\#\mu_t(y)}\ dt\\\nonumber
                          &=& \int_0^1\sqrt{\int_M \|df_x(v_{t,x})\|^2_{T_{f(x)}M}\ d\mu_t(x)}\ dt\\\nonumber
                          &\leq& \int_0^1\sqrt{\int_M \norm{df_x}^2\cdot \|v_{t,x}\|^2_{T_xM}\ d\mu_t(x)}\ dt \\\nonumber
                          &\leq& \int_0^1\sqrt{\int_M \|v_{t,x}\|^2_{T_xM}\ d\mu_t\ \cdot\  \mathrm {ess} \sup\nolimits^{\mu_t} _{x\in M}\norm{df_x}^2}\ dt\\\nonumber
                          &=& \int_0^1\sqrt{\|v_t\|^2_{L^2(\mu_t)} \cdot\  \mathrm {ess} \sup\nolimits^{\mu_t} _{x\in M}\norm{df_x}^2}\ dt\\\nonumber
                          &\leq& C\int_0^1\|v_t\|_{L^2(\mu_t)}\ dt\ < \infty.
\end{eqnarray}

\noindent With $\mathrm {ess} \sup^{\mu_t} _{x\in M}$ we mean the essential supremum with respect to the measure $\mu_t$ and $C:=\mathrm {ess} \sup^{\mu_t} _{x\in M}\norm{df_x}^2$. The last expression is finite, since we know that $\|v_t\|_{L^2(\mu_t)}\leq|\dot{\mu_t}|$ for almost every $t$ and that the metric derivative of an a.c. map is integrable.
(The calculation above shows in particular that $dF_{\mu_t}(v_t)\in L^2(\mu_t)$ for almost every $t$, as we will point out again below.)
The disintegration theorem now allows the following calculation, with $g$ being the Riemannian tensor on $N$ and $h$ the one on $M$, $\varphi\in\mathcal{C}^{\infty}_c\left(N \times (0,1)\right)$ and $\nabla$ the gradient with respect to the first coordinate:

\begin{eqnarray}\nonumber
&   & \int_N{g_y\left(\nabla\varphi(y,t),dF_{\mu_t}(v_t)_y\right)\ df_\#\mu_t(y)}\\\nonumber 
& = &  \int_N{g_y\left(\nabla\varphi(y,t),\int_{f^{-1}({y})}df_x(v_{t,x})d\mu_t^y(x)\right)\ df_\#\mu_t(y)} \\\nonumber
& = & \int_N\int_{f^{-1}({y})}{g_y\left(\nabla\varphi(y,t),df_x(v_{t,x})\right)\ d\mu_t^y(x)df_\#\mu_t(y)}\\\nonumber
& = & \int_N\int_{f^{-1}({y})}{g_{f(x)}\left(\nabla\varphi(f(x),t),df_x(v_{t,x})\right)\ d\mu_t^y(x)df_\#\mu_t(y)}\\\nonumber
& = & \int_M {g_{f(x)}\left(\nabla\varphi(f(x),t),df_x(v_{t,x})\right)d\mu_t(x)}\\\nonumber
%& = & \int_M {g_{x}\left((df_x)^{ad}(\nabla\varphi)(f(x),t),v_{t,x}\right)d\mu_t(x)}\\\nonumber
& = & \int_M {h_x\left(\nabla(\varphi\circ f)(x,t),v_{t,x}\right)d\mu_t(x)}.
\end{eqnarray}

\noindent By $(\varphi\circ f)(x,t)$ we mean $(\varphi\circ (f\times id))(x,t)$.
For the second equality we used the continuity of the Riemannian tensor at every point $y\in N$. The last step is true because for every vector $X\in T_xM$,

\begin{eqnarray}\nonumber
h_x(\nabla(\varphi\circ f)(x),X) & = & X(\varphi\circ f)(x) =  df(X)(\varphi)(f(x))\\\nonumber
                                 & = & g_{f(x)}\left(\nabla\varphi(f(x)),d_xf(X)\right).
\end{eqnarray}

With this, we can now prove our claim that $\frac{d}{dt}F(\mu_t)+\nabla(dF_{\mu_t}(v_t) F(\mu_t))=0$ in the weak sense: For every $\varphi\in\mathcal{C}^{\infty}_c\left(N \times (0,1)\right)$ it is
\begin{eqnarray}\nonumber
&   & \int_{0}^{1}\int_N{\left(\frac{\partial}{\partial t}\varphi\right)(y,t)+g_y\left(\nabla\varphi(y,t),dF_{\mu_t}(v_t)_y\right)\ df_\#\mu_t(y)} dt \\\nonumber
& = & \int_{0}^{1}\int_M{\left(\frac{\partial}{\partial t}\varphi\right)(f(x),t)+ h_x\left(\nabla(\varphi\circ f)(x,t),v_{t,x}\right)}\ d\mu_t(x) dt\\\nonumber
& = & \int_{0}^{1}\int_M{\left(\frac{\partial}{\partial t}(\varphi\circ f)\right)(x,t)+ h_x\left(\nabla(\varphi\circ f)(x,t),v_{t,x}\right)}\ d\mu_t(x) dt.\\\nonumber
& = & 0.
\end{eqnarray}
Since $f$ is smooth and proper, $\varphi\circ f\in\mathcal{C}^{\infty}_c\left(M \times (0,1)\right)$ and we can apply our assumption on $(\mu_t,v_t)$ to be an a.c. couple.
\end{proof}
%
%---------------------------------------------------------
\subsection{About the image of $dF_\mu$}\label{sec.image}
%---------------------------------------------------------
%
For Theorem $\ref{thm.ac}$ we did not need to test whether $dF_{\mu}(v)\in T_{F(\mu)}W(N)$ for all $v\in T_\mu W(M)$, since we only needed $(F(\mu_t),dF_{\mu_t}(v_t))$ to be an a.c. couple. But is it still true, given that $(\mu_t,v_t)$ is a tangent couple? 

To begin with, the proof of Theorem \ref{thm.ac} also guarantees that for every $\mu\in W(M)$ and $v\in T_\mu W(M)$, $dF_{\mu}(v)\in L^2(F(\mu))$.
Knowing this, we can consider formula \eqref{eq.vf} as the prescription for a map between $T_\mu W(M)$ and $L^2(F(\mu))$. 

It is also useful to know that this map is always bounded, which we will see in the next proposition. For the rest of this section, let $F:W(M)\rightarrow W(N)$ be as in Theorem \ref{thm.ac} and $dF_{\mu}(v)$ as in formula \eqref{eq.vf}.

\begin{prop}[\emph{\textbf{Boundedness of $dF$}}]\label{thm.bd}
	For each $\mu\in W(M)$, $dF_\mu:T_\mu W(M)\rightarrow L^2(F(\mu))$ is bounded with 
	\begin{equation}\label{eq.bound}
	\norm{dF_\mu} \leq \mathrm {ess} \sup\nolimits _{x\in M}^\mu\norm{df_x}.
	\end{equation} Here, $\norm{\cdot}$ denotes the operator norm of the respective linear map and $\mathrm {ess} \sup _{x\in M}^\mu$ the essential supremum with respect to $\mu$. 
\end{prop}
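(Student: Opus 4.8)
The plan is to bound the $L^2(F(\mu))$-norm of $dF_\mu(v)$ by $\left(\mathrm{ess}\sup_{x\in M}^\mu\norm{df_x}\right)\cdot\norm{v}_{L^2(\mu)}$ for every $v\in T_\mu W(M)$, since this inequality is exactly the statement $\norm{dF_\mu}\leq\mathrm{ess}\sup_{x\in M}^\mu\norm{df_x}$ once we recall that $dF_\mu$ is linear in $v$ (noted just after Theorem \ref{thm.ac}). In fact, the entire computation is already present inside the proof of Theorem \ref{thm.ac}: the long chain of inequalities there estimates $\int_0^1\norm{dF_{\mu_t}(v_t)}_{L^2(F(\mu_t))}\,dt$, but every line of that chain is obtained by first establishing the \emph{pointwise-in-$t$} inequality $\norm{dF_{\mu_t}(v_t)}_{L^2(F(\mu_t))}\leq C^{1/2}\norm{v_t}_{L^2(\mu_t)}$ and only then integrating over $t$. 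So I would simply extract that $t$-wise estimate for a single fixed measure $\mu$ and vector field $v$.

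Concretely, I would start from $\norm{dF_\mu(v)}_{L^2(F(\mu))}=\sqrt{\int_N\norm{dF_\mu(v)_y}_{T_yN}^2\,df_\#\mu(y)}$, substitute the defining formula \eqref{eq.vf}, and then apply in turn: Jensen's inequality to pull the norm inside the fiber-integral $\int_{f^{-1}(y)}df_x(v_x)\,d\mu^y(x)$ (legitimate because $\mu^y$ is a probability measure), the disintegration identity $d\mu(x)=d\mu^y(x)\,df_\#\mu(y)$ to collapse the iterated integral over $N$ and the fibers into a single integral over $M$, the pointwise operator-norm bound $\norm{df_x(v_x)}_{T_{f(x)}N}\leq\norm{df_x}\cdot\norm{v_x}_{T_xM}$, and finally the essential-supremum estimate $\int_M\norm{df_x}^2\norm{v_x}^2\,d\mu\leq\left(\mathrm{ess}\sup_{x\in M}^\mu\norm{df_x}^2\right)\int_M\norm{v_x}^2\,d\mu$. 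The right-hand side is $\left(\mathrm{ess}\sup_{x\in M}^\mu\norm{df_x}\right)^2\norm{v}_{L^2(\mu)}^2$, which gives exactly \eqref{eq.bound}.

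The one genuine subtlety — and the only point I would flag as requiring care rather than routine bookkeeping — is that $dF_\mu(v)$ lands in $L^2(F(\mu))$ in the first place, so that its operator norm is well defined; but this is precisely what was remarked at the opening of Section \ref{sec.image}, namely that the finiteness computation in the proof of Theorem \ref{thm.ac} already shows $dF_\mu(v)\in L^2(F(\mu))$ for each fixed $\mu$ and $v$. Given that, the boundedness is immediate from the same chain of inequalities with the outer $\int_0^1(\cdot)\,dt$ deleted, and linearity of $v\mapsto dF_\mu(v)$ then turns the norm bound into the operator-norm bound \eqref{eq.bound}. In short, this proposition is the pointwise-in-$t$ core of Theorem \ref{thm.ac} isolated and repackaged, so I expect no new obstacle beyond re-deriving that single inequality cleanly.
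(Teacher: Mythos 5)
Your proposal is correct and is essentially the paper's own argument: the paper proves Proposition \ref{thm.bd} precisely by remarking that inequality \eqref{eq.bound} follows from ``taking similar steps as in the proof of Theorem \ref{thm.ac}'', i.e.\ the pointwise-in-$t$ chain (Jensen/Bochner triangle inequality, disintegration, operator-norm bound, essential supremum) with the outer $\int_0^1(\cdot)\,dt$ removed, exactly as you describe, and it likewise records beforehand that $dF_\mu(v)\in L^2(F(\mu))$ so the operator norm is well defined. The only detail worth adding is that the right-hand side of \eqref{eq.bound} is finite because $\mathrm{ess}\sup^\mu_{x\in M}\norm{df_x}\leq\sup_{x\in M}\norm{df_x}<\infty$ by the standing hypothesis of Theorem \ref{thm.ac}.
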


Inequality \eqref{eq.bound} can be attained by taking similar steps as in the proof of Theorem \ref{thm.ac}. The right-hand side of equation \eqref{eq.bound} is finite since we demanded $\sup_{x\in M}\norm{dg_x}$ to be finite.

Let us give an example for a function $F$ for which equality is attained for every $\mu$ in inequality \eqref{eq.bound}.

\begin{example}\label{lem.preisom}
	Let $g:M\rightarrow M$ be a Riemannian isometry, i.e. $g^*h=h$, where $h$ is the Riemannian metric tensor on $M$. Then, for $F=g_\#$ and for all $\mu\in W(M)$, $\norm{dF_\mu} = \mathrm {ess} \sup _{x\in M}^\mu\norm{dg_x}=1$. 
    This is, because on the one hand, for all $x\in M$, $\norm{dg_x}=1$, since $dg$ is an isometry between the tangent spaces $T_xM$ and $T_{g(x)}M$. 
	On the other hand, 
	\begin{equation}\nonumber
	\norm{dg_\#} = \sup_{\|v\|_{T_\mu W(M)}=1}\|dg(v)\|_{T_{g_\#\mu}W(M)}=\sup_{\|v\|_{T_\mu W(M)}=1}\|v\|_{T_{\mu}W(M)}=1.
	\end{equation} 
\end{example}

To come back to our question, whether $dF_{\mu}(v)$ is always an element of $T_{F(\mu)}W(M)$, we first want to study the following simple cases.

\begin{lemma}\label{lem.extreme}
	Let $\mu=\delta_x$, for $x\in M$. Then $dF_{\mu}(v)\in T_{F(\mu)}W(N)$ for all $v\in T_\mu W(M)$.
\end{lemma}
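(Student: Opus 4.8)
The plan is to exploit that a Dirac measure is concentrated at a single point, so that its entire $L^2$-space of vector fields already qualifies as the tangent space, leaving essentially nothing to verify beyond membership in $L^2$. As a first step I would identify the image measure: since $F=f_\#$, we have $F(\delta_x)=f_\#\delta_x=\delta_{f(x)}$, again a Dirac measure, now concentrated at the single point $f(x)\in N$.

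Next I would invoke the structural fact recalled after Definition \ref{dfn.tspace} (see \cite{Gigli}, Remark 1.33): whenever a measure $\nu$ is supported on an at most countable set, its tangent space is the full Hilbert space, i.e.\ $T_\nu W=L^2(\cdot,\nu)$. Applying this to $\nu=\delta_{f(x)}$, whose support is the single point $\{f(x)\}$, yields $T_{\delta_{f(x)}}W(N)=L^2(TN,\delta_{f(x)})$. The whole target condition thereby degenerates: any element of $L^2(F(\mu))$ automatically lies in $T_{F(\mu)}W(N)$.

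Finally I would combine this with the observation opening Section \ref{sec.image}, namely that the proof of Theorem \ref{thm.ac} already guarantees $dF_\mu(v)\in L^2(F(\mu))$ for every $\mu\in W(M)$ and every $v\in T_\mu W(M)$. Specialising to $\mu=\delta_x$, so that $F(\mu)=\delta_{f(x)}$, this reads $dF_{\delta_x}(v)\in L^2(TN,\delta_{f(x)})=T_{F(\delta_x)}W(N)$, which is exactly the assertion.

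I do not expect a genuine obstacle here; the entire content is the recognition that the tangent space at a Dirac measure is all of $L^2$. If one prefers an explicit check, the disintegration of $\delta_x$ along $f$ forces the conditional measure $\delta_x^{\,y}$ at $y=f(x)$ to equal $\delta_x$ itself, so formula \eqref{eq.vf} collapses to the single vector $dF_{\delta_x}(v)_{f(x)}=df_x(v_x)\in T_{f(x)}N$ (and zero elsewhere). This makes $dF_{\delta_x}(v)$ manifestly an element of $L^2(TN,\delta_{f(x)})$, though the explicit value is not even needed for the inclusion.
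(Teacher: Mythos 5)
Your proof is correct and takes essentially the same route as the paper: both rest on the identification $F(\delta_x)=\delta_{f(x)}$ together with the fact that the tangent space at a Dirac measure is the whole $L^2$-space (the paper phrases this as $L^2(\delta_y)\cong\R^n\cong T_{\delta_y}W(N)$, while you cite the countable-support remark, but it is the same observation). Your additional explicit computation via disintegration is a nice but unnecessary bonus.
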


\begin{proof}
	This is true because $F(\delta_x)=\delta_{f(x)}$ and for every $y\in N$, $L^2(\delta_y)\cong\R^n\cong T_{\delta_y}W(N)$, $n=dim\ N$. 
\end{proof}

\begin{lemma}\label{lem.isograd}
	Let $g:M\rightarrow M$ be a Riemannian isometry, i.e. $g^*h=h$, and $v=\nabla\varphi\in T_\mu W(M)$, $\varphi\in\mathcal{C}_c^\infty(M)$. Then for every $\mu\in W(M)$, $dF_\mu(v)=dg(v)=\nabla(\varphi\circ g^{-1})\in T_{F(\mu)}W(M)$.
\end{lemma}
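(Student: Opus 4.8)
### Proof Plan for Lemma~\ref{lem.isograd}

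The plan is to verify the three asserted equalities by direct computation, using that $g$ is a Riemannian isometry so that $dg_x : T_xM \to T_{g(x)}M$ is a linear isometry for every $x$, and in particular $g$ is a bijection (being a surjective local isometry of a connected complete manifold). First I would observe that since $g$ is injective, the fiber $f^{-1}(y)$ in formula \eqref{eq.vf} — here with $f=g$ — is the single point $g^{-1}(y)$, so the disintegration measure $\mu^y$ is the Dirac mass at $g^{-1}(y)$ and the integral in \eqref{eq.vf} collapses. Concretely, for $v \in T_\mu W(M)$,
\begin{equation*}
dF_\mu(v)_y = \int_{g^{-1}(y)} dg_x(v_x)\, d\mu^y(x) = dg_{g^{-1}(y)}\bigl(v_{g^{-1}(y)}\bigr),
\end{equation*}
which is exactly $(dg(v))_y$. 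This establishes $dF_\mu(v) = dg(v)$, reusing the remark made just after Theorem~\ref{thm.ac} that $dF_\mu$ reduces to $df$ in the injective case.

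Next I would prove the middle equality $dg(v) = \nabla(\varphi \circ g^{-1})$ when $v = \nabla\varphi$ with $\varphi \in \mathcal{C}_c^\infty(M)$. This is a pointwise identity of vector fields, verified against an arbitrary test vector. For $y \in M$ and $X \in T_yM$, writing $x = g^{-1}(y)$, the isometry property $g^*h = h$ gives
\begin{equation*}
h_y\bigl((dg(\nabla\varphi))_y, X\bigr) = h_y\bigl(dg_x(\nabla\varphi|_x), X\bigr) = h_x\bigl(\nabla\varphi|_x, dg^{-1}_y(X)\bigr),
\end{equation*}
and the right-hand side equals $dg^{-1}_y(X)(\varphi) = X(\varphi \circ g^{-1})|_y = h_y(\nabla(\varphi\circ g^{-1})|_y, X)$ by the chain rule. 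Since $X$ is arbitrary and $h_y$ is nondegenerate, $(dg(\nabla\varphi))_y = \nabla(\varphi\circ g^{-1})|_y$, so $dg(v) = \nabla(\varphi\circ g^{-1})$. Note $\varphi \circ g^{-1} \in \mathcal{C}_c^\infty(M)$ because $g^{-1}$ is a smooth diffeomorphism, so its support is the compact image $g(\supp\varphi)$.

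Finally, the membership $\nabla(\varphi\circ g^{-1}) \in T_{F(\mu)}W(M)$ is essentially immediate from Definition~\ref{dfn.tspace}: the tangent space is the $L^2(F(\mu))$-closure of gradients of compactly supported smooth functions, and $\varphi\circ g^{-1}$ is itself such a function. The vector field $\nabla(\varphi\circ g^{-1})$ therefore lies in the generating set before taking closure, a fortiori in $T_{F(\mu)}W(M)$. I do not anticipate a serious obstacle here: the whole lemma is a concrete unwinding of \eqref{eq.vf} in the injective isometric case, and the only point requiring care is the bookkeeping that $f^{-1}(y) = \{g^{-1}(y)\}$ makes the disintegration trivial and that $\varphi \circ g^{-1}$ retains compact support. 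The mildly subtle step — if any — is confirming that $g$ is genuinely a diffeomorphism (not merely an isometric immersion) so that $g^{-1}$ is smooth with compact-support-preserving pullback; this follows from completeness and connectedness of $M$ together with $g$ being a Riemannian isometry as defined.
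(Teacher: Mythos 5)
Your proof is correct and follows essentially the same route as the paper: the core step is the identical duality computation $h(\nabla(\varphi\circ g^{-1}),X)=d\varphi(dg^{-1}(X))=h(dg(\nabla\varphi),X)$ against an arbitrary test vector, using $g^*h=h$. The extra bookkeeping you supply (collapse of the disintegration in \eqref{eq.vf} when $g$ is injective, and membership in $T_{F(\mu)}W(M)$ because $\varphi\circ g^{-1}\in\mathcal{C}_c^\infty(M)$) is exactly what the paper delegates to the remark following Theorem~\ref{thm.ac} and to Definition~\ref{dfn.tspace}.
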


\begin{proof}
	For the Riemannian metric $h$ on $M$ and for every vector field $X$
	\begin{eqnarray}\nonumber
	h(\nabla (\varphi\circ g^{-1}),X) &=& d(\varphi\circ g^{-1})(X)=d\varphi(dg^{-1}(X))=h(\nabla\varphi, dg^{-1}(X))\\\nonumber
	&=& h(dg(\nabla\varphi),X).
	\end{eqnarray}
\end{proof}

Since we know from Proposition \ref{thm.bd} that $dg_{\#\mu}$ is bounded and therefore continuous for every $\mu\in W(M)$, we can infer the following more general statement.

\begin{corollary}\label{cor.tang}
	Let $g:M\rightarrow M$ be a Riemannian isometry and $T_\mu W(M)\ni v=\lim_{n \rightarrow \infty}\nabla\varphi_n$. Then $dg(v)=\lim_{n \rightarrow \infty}\nabla(\varphi_n\circ g^{-1})\in T_{F(\mu)}W(M)$.
\end{corollary}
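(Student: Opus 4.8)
The plan is to combine three facts already established in the excerpt: the continuity of $dF_\mu = dg_\#$ coming from Proposition \ref{thm.bd} (and sharpened to an isometry in Example \ref{lem.preisom}), the explicit formula of Lemma \ref{lem.isograd} describing the action of $dg_\#$ on gradients of compactly supported functions, and the fact that $T_{F(\mu)}W(M)$ is, by Definition \ref{dfn.tspace}, a closed subspace of $L^2(F(\mu))$. The corollary then reduces to passing the limit $v=\lim_n\nabla\varphi_n$ through the continuous map $dg_\#$ and checking that the resulting limit still lies in the closed set $T_{F(\mu)}W(M)$.

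First I would record that, by hypothesis, $v=\lim_{n\to\infty}\nabla\varphi_n$ with $\varphi_n\in\mathcal{C}_c^\infty(M)$, the convergence taking place in $L^2(TM,\mu)$. Since $g$ is a Riemannian isometry, Proposition \ref{thm.bd} shows that $dg_\#:T_\mu W(M)\to L^2(F(\mu))$ is bounded, hence continuous; applying it to the convergent sequence gives
\[
dg(v)=dg_\#\left(\lim_{n\to\infty}\nabla\varphi_n\right)=\lim_{n\to\infty}dg_\#(\nabla\varphi_n),
\]
with the right-hand limit taken in $L^2(F(\mu))$. By Lemma \ref{lem.isograd}, each term equals $dg_\#(\nabla\varphi_n)=\nabla(\varphi_n\circ g^{-1})$, which yields the asserted identity $dg(v)=\lim_{n\to\infty}\nabla(\varphi_n\circ g^{-1})$.

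It then remains to show the limit lies in $T_{F(\mu)}W(M)$. Since $g$ is a Riemannian isometry of $M$, it is in particular a diffeomorphism, so $g^{-1}$ is smooth and carries compact sets to compact sets; hence $\varphi_n\circ g^{-1}\in\mathcal{C}_c^\infty(M)$, with $\mathrm{supp}(\varphi_n\circ g^{-1})=g(\mathrm{supp}\,\varphi_n)$ compact. Therefore every $\nabla(\varphi_n\circ g^{-1})$ belongs to the set $\{\nabla\psi\mid\psi\in\mathcal{C}_c^\infty(M)\}$, whose $L^2(F(\mu))$-closure is by Definition \ref{dfn.tspace} precisely $T_{F(\mu)}W(M)$. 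As $dg(v)$ is an $L^2(F(\mu))$-limit of such elements and $T_{F(\mu)}W(M)$ is closed, we conclude $dg(v)\in T_{F(\mu)}W(M)$.

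The argument is essentially formal, so the only points requiring care are bookkeeping ones rather than a genuine difficulty. One must make sure the limit is interpreted in $L^2(F(\mu))=L^2(TM,g_\#\mu)$ and not in $L^2(TM,\mu)$ — this is exactly what continuity of $dg_\#$ supplies — and one must verify that precomposition with $g^{-1}$ preserves both smoothness and compactness of support, which relies on $g$ being a genuine diffeomorphism and not merely a local isometry. Closedness of the target tangent space is immediate from its definition as a closure.
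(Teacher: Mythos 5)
Your proof is correct and follows exactly the route the paper intends: the paper derives this corollary in one line from the boundedness (hence continuity) of $dg_\#$ in Proposition \ref{thm.bd} combined with Lemma \ref{lem.isograd}, which is precisely your argument. Your additional bookkeeping — that $\varphi_n\circ g^{-1}\in\mathcal{C}_c^\infty(M)$ because $g$ is a diffeomorphism, and that the limit is taken in $L^2(F(\mu))$ where $T_{F(\mu)}W(M)$ is closed by Definition \ref{dfn.tspace} — simply makes explicit what the paper leaves implicit.
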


%\begin{example}
%	A simple example for an isometry on $\R^3$ is $f(x_1,x_2,x_3)=(x_1,x_3,x_2)$. For a vector field $v(x)=(v^1(x),v^2(x),v^3(x))$ it is then, for every $\mu\in W(\R^3)$, $dF_\mu(v)=dg(v)=(v^1(g(x)),v^3(g(x)),v^2(g(x)))$. Furthermore, we can compute directly that 
%	$\nabla(\varphi\circ g^{-1})=(\partial_{x_1}\varphi, \partial_{x_3}\varphi, \partial_{x_2}\varphi)=dg(\nabla\varphi)$.
%\end{example}

However, the case in Lemma \ref{lem.extreme} is extreme and the choice of functions in Lemma \ref{lem.isograd} specific.
We will now see that it can well be that $dF_\mu$ does not always hit the tangent space at $F(\mu)$.

\begin{thm}\label{thm.notin}
	Let $M$ be a compact manifold without boundary and $f=\id_M:(M,h_1)\rightarrow (M,h_2)$ the identity map on $M$, where $h_2=\nu^2 h_1$ and $\nu:M\rightarrow (0,\infty)$ nonconstant.
	Then for $F=\id_\#:W(M,h_1)\rightarrow W(M,h_2)$ there exists a $\nabla\varphi\in T_\mu W(M,h_1)$ so that $dF_\mu(\nabla\varphi)\notin T_\mu W(M,h_2)$, where $\mu=C\cdot\mu_{h_1}$, $\mu_{h_1}$ the volume measure on $M$ with respect to $h_1$ and $C=1/\mu_{h_1}(M)$.
\end{thm}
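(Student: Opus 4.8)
The plan is to reduce the statement to a property of the two gradient operators attached to $h_1$ and $h_2$. Since $f=\id_M$ is injective, the integral in \eqref{eq.vf} collapses ($f^{-1}(y)=\{y\}$, $\mu^y=\delta_y$, $df_x=\id_{T_xM}$), so $dF_\mu(\nabla\varphi)=\nabla_{h_1}\varphi$ as an abstract vector field, and the content of the theorem is that a genuine $h_1$-gradient need not lie in $T_\mu W(M,h_2)$. The structural point I would exploit is that the normal space $T^\perp_\mu W$ does not see the metric: the weak divergence condition $\int_M h(\nabla\psi,w)\,d\mu=\int_M d\psi(w)\,d\mu=0$ (for all $\psi$) is independent of $h$, so $T^\perp_\mu W(M,h_1)=T^\perp_\mu W(M,h_2)=:\mathcal N$, the space of $\mu$-divergence-free fields; only the inner product used to form the complement changes. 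Writing $\nu^2\mathcal N:=\{\nu^2 w\mid w\in\mathcal N\}$ and using $h_2=\nu^2 h_1$, one has $T_\mu W(M,h_2)=\mathcal N^{\perp_{h_2}}=(\nu^2\mathcal N)^{\perp_{h_1}}$, whereas $T_\mu W(M,h_1)=\mathcal N^{\perp_{h_1}}$. Hence it suffices to produce a single $w\in\mathcal N$ with $\nu^2 w\notin\mathcal N$.

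To detect when $\nu^2 w$ fails to be divergence-free I would integrate by parts: for $\psi\in\mathcal C^\infty_c(M)$ and $w\in\mathcal N$ one has $\int_M d\psi(\nu^2 w)\,d\mu=\int_M\nu^2\,d\psi(w)\,d\mu=\int_M d(\nu^2\psi)(w)\,d\mu-\int_M\psi\,w(\nu^2)\,d\mu=-\int_M\psi\,w(\nu^2)\,d\mu$, the first term vanishing because $\nu^2\psi$ is again an admissible test function and $w\in\mathcal N$. Thus $\nu^2 w\in\mathcal N$ if and only if $w(\nu^2)=2\nu\,w(\nu)=0$ holds $\mu$-a.e., and since $\nu>0$ the task reduces to finding a $\mu$-divergence-free field $w$ with $w(\nu)\not\equiv 0$, i.e.\ one that is not everywhere tangent to the level sets of $\nu$.

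The technical heart, which I expect to be the main obstacle, is the explicit construction of such a $w$. Because $\nu$ is nonconstant and smooth on the connected manifold $M$, there is a point $x_0$ and a vector $v_0\in T_{x_0}M$ with $d\nu_{x_0}(v_0)\neq 0$. In a chart around $x_0$, writing $\mu=\rho\,dx$ with $\rho>0$, a field $w=\sum_i(U^i/\rho)\,\partial_i$ is $\mu$-divergence-free exactly when $U=(U^i)$ is Euclidean divergence-free; for $\dim M\geq 2$ one can take $U$ of stream-function type (e.g.\ $U=(\partial_2 g,-\partial_1 g,0,\dots)$ with $g\in\mathcal C^\infty_c$), prescribing $U(x_0)=\rho(x_0)v_0$ and extending by zero, while for $\dim M=1$ (so $M=S^1$) the arc-length field already works. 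By continuity $w(\nu)\neq 0$ on a neighbourhood of $x_0$ of positive $\mu$-measure, so choosing a suitable bump $\psi$ there gives $\int_M d\psi(\nu^2 w)\,d\mu\neq 0$ and hence $\nu^2 w\notin\mathcal N$. Finally, the Helmholtz splitting $L^2(TM,\mu)=T_\mu W(M,h_1)\oplus_{h_1}\mathcal N$ shows that $\nu^2 w$ then has nonzero $h_1$-component in $T_\mu W(M,h_1)=\overline{\{\nabla_{h_1}\varphi\}}$, so some $\varphi$ satisfies $\int_M h_1(\nabla_{h_1}\varphi,\nu^2 w)\,d\mu=\int_M h_2(\nabla_{h_1}\varphi,w)\,d\mu\neq 0$; by the setup of the first paragraph this is precisely the statement $dF_\mu(\nabla_{h_1}\varphi)=\nabla_{h_1}\varphi\notin T_\mu W(M,h_2)$, completing the proof.
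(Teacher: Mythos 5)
Your proof is correct, and it takes a genuinely different route from the paper's. The paper passes to $1$-forms via the musical isomorphism of $h_2$: under it, $T_\mu W(M,h_2)$ becomes the $L^2$-closure of exact forms, $\nabla^{h_1}\varphi$ becomes $\nu^2 d\varphi$ (Lemma \ref{lem.2}), and one chooses $\varphi$ with $d(\nu^2)\wedge d\varphi\neq 0$ and invokes the Hodge-type Lemma \ref{lem.1} (a smooth $1$-form with $d\omega\neq 0$ is not an $L^2$-limit of exact forms, proved by pairing against $d^*d\omega$ on the compact manifold). You instead stay on the vector-field side and exploit a structural fact the paper never isolates: the normal space $\mathcal{N}$ of weakly $\mu$-divergence-free fields is metric-independent, so the two tangent spaces are $\mathcal{N}^{\perp_{h_1}}$ and $(\nu^2\mathcal{N})^{\perp_{h_1}}$, and the whole problem reduces, via your integration by parts, to exhibiting one $w\in\mathcal{N}$ with $w(\nu)\not\equiv 0$, which you build by hand (stream function in a chart for $\dim M\geq 2$, arc-length field on $S^1$ for $\dim M=1$); the final passage from $w$ back to an explicit $\nabla_{h_1}\varphi$ via the splitting $L^2=T_\mu W(M,h_1)\oplus_{h_1}\mathcal{N}$ and density of gradients is clean and does not even need $\nu^2\mathcal{N}$ to be closed. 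What each approach buys: the paper's is shorter if one has Hodge theory at hand, but it silently requires $\dim M\geq 2$ --- on a compact connected $1$-manifold ($M=S^1$) every $2$-form vanishes, so $d(\nu^2)\wedge d\varphi\equiv 0$ and Lemma \ref{lem.1} detects nothing, whereas the theorem as stated is still true there and your argument covers that case; yours is also more elementary (only integration by parts and the orthogonal decomposition) and makes the failure mechanism concrete: a divergence-free field transverse to the level sets of $\nu$. Two small points you should tighten, both routine: prescribing $U(x_0)=\rho(x_0)v_0$ with a stream function forces $v_0$ to lie in the coordinate $2$-plane carrying $g$, so fix the chart (e.g.\ with first coordinate axis along $v_0$) before constructing $g$; and the implication ``$\int_M\psi\, w(\nu^2)\,d\mu=0$ for all $\psi$ implies $w(\nu^2)=0$ $\mu$-a.e.'' deserves a one-line justification since $w(\nu^2)$ is only in $L^1(\mu)$.
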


\begin{proof}
	It is clear that $F=\id_{W(M)}$ and $dF_\mu(v)=v\ \forall v\in T_\mu W(M,h_1)$. However, $v$ is not automatically a member of $T_\mu W(M,h_2)$. We will show that if $\varphi$ is chosen appropriately, $v=\nabla^{h_1}\varphi$ is not a limit of gradients with respect to $h_2$. 
	
	For this, recall that on a general Riemannian manifold $(M,h)$, there is a duality between vector fields $v$ and 1-forms $v^\flat$ by the formula $v_h^\flat(\cdot):=h(v,\cdot)$, which maps the vector field $\nabla^h\varphi$ to the $1$-form $d\varphi$. This identification gives an isomorphism between
	$\overline{\{\nabla^h\varphi\}}^{L^2(TM,h,\mu)}$ and
	$\overline{\{d\phi\}}^{L^2(T^*M,h^*,\mu)}$. Since this isomorphism depends on the chosen metric, it is in general $v^{\flat}_{h_1}\neq v^{\flat}_{h_2}$, but rather $v^{\flat}_{h_2}=\nu^2 v^{\flat}_{h_2}$, as Lemma \ref{lem.2} below shows. And thus $\nabla^{h_1}\varphi^{\flat}_{h_2}=\nu^2d\varphi$.
	
	Now $d(\nu^2d\varphi)=d(\nu^2) \wedge d\varphi$ which one can easily arrange to be
	non-zero. From Lemma \ref{lem.1} below we can thus infer that $\nabla^{h_1}\varphi^{\flat}_{h_2}\notin \overline{\{d\varphi\}}^{L^2(T^*M,h_2^*,\mu_{h_2})}$.
	As $C\mu_{h_1}=C\nu^n\mu_{h_2}$, with $n=\dim(M)$, the topology on $L^2(T^*M,h_2,C\mu_{h_1})$ and $L^2(T^*M,h_2,\mu_{h_2})$ coincide, so one can conclude that $\nu^2d\varphi$ is not an element of $T_{\mu}W(M,h_2)$.
\end{proof}

\begin{lemma}\label{lem.1}
	If $\omega$ is a smooth $1$-form on $M$ with $d\omega\ne 0$ then
	$\omega\notin \overline{\{d\varphi\}}^{L^2(T^*M,g^*,\mu_h)}$, where $\mu_h$ is the volume measure on $M$ with respect to h.
\end{lemma}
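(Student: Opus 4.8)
The plan is to prove the contrapositive using the $L^2$-adjointness of the exterior derivative $d$ and the codifferential $\delta$. Recall that in the ambient setting of Theorem \ref{thm.notin} the manifold $M$ is compact and without boundary, and $\mu_h$ is the Riemannian volume measure of the metric (written $g$ in the statement, $h$ elsewhere). These two facts are exactly what make $d$ and its formal adjoint $\delta$ satisfy, for every smooth $k$-form $\alpha$ and smooth $(k+1)$-form $\eta$, the integration-by-parts identity
\[
\int_M \langle d\alpha,\eta\rangle\, d\mu_h \;=\; \int_M \langle \alpha,\delta\eta\rangle\, d\mu_h
\]
with no boundary contribution; it is here, and essentially only here, that the choice of the Riemannian volume measure is used.

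First I would fix a smooth $1$-form $\omega$ with $d\omega\neq 0$ and assume, for contradiction, that $\omega\in\overline{\{d\varphi\}}^{L^2(T^*M,h^*,\mu_h)}$. Next, I observe that for an \emph{arbitrary} smooth $2$-form $\beta$ the smooth $1$-form $\delta\beta$ is $L^2$-orthogonal to every exact form: using the identity above together with $\delta\circ\delta=0$ (which follows from $d\circ d=0$ by duality),
\[
\int_M \langle d\varphi,\delta\beta\rangle\, d\mu_h \;=\; \int_M \langle \varphi,\delta(\delta\beta)\rangle\, d\mu_h \;=\; 0 \qquad \text{for all } \varphi\in\mathcal{C}^\infty_c(M).
\]
Since $\{d\varphi\mid\varphi\in\mathcal{C}^\infty_c(M)\}$ is a linear subspace and $\delta\beta$ is orthogonal to it, $\delta\beta$ is orthogonal to its $L^2$-closure as well, and hence orthogonal to $\omega$.

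Then I would use adjointness in the other direction, now with $\omega$ as the $1$-form and $\beta$ as the $2$-form, to get $\int_M\langle\omega,\delta\beta\rangle\, d\mu_h=\int_M\langle d\omega,\beta\rangle\, d\mu_h$. Combined with the previous step this yields $\int_M\langle d\omega,\beta\rangle\, d\mu_h=0$ for every smooth $2$-form $\beta$. Because the pointwise inner product on $2$-forms induced by $h$ is non-degenerate and $d\omega$ is smooth, the fundamental lemma of the calculus of variations forces $d\omega=0$, contradicting the hypothesis. Therefore $\omega\notin\overline{\{d\varphi\}}^{L^2}$, as claimed.

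I do not anticipate a genuine obstacle: the argument is elementary Hodge theory on a closed manifold, and I deliberately avoid invoking the full Hodge decomposition, needing only adjointness, $\delta^2=0$, and non-degeneracy of $\langle\cdot,\cdot\rangle$ on forms. The one point requiring care is the bookkeeping of metric and measure — one must ensure the $L^2$-pairing is taken with respect to the Riemannian volume $\mu_h$, so that $d$ and $\delta$ are true adjoints (Stokes' theorem with empty boundary), since this adjointness is the engine driving both orthogonality computations.
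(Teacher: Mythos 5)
Your proof is correct and takes essentially the same route as the paper's: both rest on the $L^2$-adjointness of $d$ and $\delta$ on the closed manifold together with $d\circ d=0$ (equivalently $\delta\circ\delta=0$), concluding that $\omega$ lying in the closure of the exact forms forces $d\omega$ to pair to zero and hence vanish. The paper merely specializes your arbitrary $\beta$ to $\beta=d\omega$ from the outset, computing $(d\omega,d\omega)=(\omega,d^*d\omega)=\lim(d\varphi_n,d^*d\omega)=\lim(dd\varphi_n,d\omega)=0$ along an approximating sequence, which collapses your final non-degeneracy step into a single line.
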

\begin{proof}
	Assuming the opposite and using the standard inner products, one gets the following contradiction:
	\begin{equation}
	0\ne    (d\omega,d\omega) = (\omega,d^*d\omega)	= \lim( d\varphi_n,d^*d\omega)=
	\lim (dd\varphi_n,d\omega)=\lim 0 =0
	\end{equation}
\end{proof}

\begin{lemma}\label{lem.2}
	In the situation of Theorem \ref{thm.notin} and interpreting $dF$ as a map of $L^2$-one forms, we have $dF_\mu(\omega) = \nu^2\omega$.
\end{lemma}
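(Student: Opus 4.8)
The plan is to reduce the statement to two ingredients: the fact that $dF_\mu$ acts as the identity on the underlying vector fields, and the metric dependence of the musical isomorphism. First I would recall from the proof of Theorem \ref{thm.notin} that, since $f=\id_M$ is injective with $df_x=\id_{T_xM}$, formula \eqref{eq.vf} collapses to $dF_\mu(v)=v$ for every vector field $v$: the fiber $f^{-1}(y)$ is the single point $y$, the conditional measure $\mu^y$ from the disintegration is the Dirac mass there (because $f_\#\mu=\mu$ makes the disintegration trivial), and $df_y$ is the identity on $T_yM$. Thus, at the level of vector fields, $dF_\mu$ is literally the identity, and all of the content of the lemma lies in how this identity is transported through the two different metric identifications of vector fields with $1$-forms.

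Next I would make precise what ``interpreting $dF$ as a map of $L^2$-one-forms'' means. On the source we use $h_1$ to identify a vector field $v$ with the $1$-form $v^\flat_{h_1}(\cdot)=h_1(v,\cdot)$, giving the isometry $L^2(TM,h_1,\mu)\cong L^2(T^*M,h_1^*,\mu)$; on the target we use $h_2$ in the same way. The induced map on $1$-forms is therefore the composite $\omega\mapsto (\omega^{\sharp_{h_1}})^\flat_{h_2}$, where $\omega^{\sharp_{h_1}}$ denotes the vector field $v$ determined by $v^\flat_{h_1}=\omega$ and the middle step is the identity $dF_\mu$ on vector fields.

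The computation is then a single line using the hypothesis $h_2=\nu^2 h_1$. For $\omega=v^\flat_{h_1}$ and any vector field $X$,
\[
(\omega^{\sharp_{h_1}})^\flat_{h_2}(X) = h_2(v,X) = \nu^2\, h_1(v,X) = \nu^2\, v^\flat_{h_1}(X) = \nu^2\, \omega(X),
\]
so the map on $1$-forms is multiplication by $\nu^2$, as claimed.

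There is essentially no hard analytic obstacle here; the only point that requires care is the bookkeeping of which metric is used on which side. The whole subtlety is that the flat isomorphism is metric-dependent, so although $dF_\mu$ is the identity on the underlying vector fields, its expression in the two different $1$-form pictures differs by exactly the conformal factor $\nu^2$. I would also note that this is precisely the identity $v^\flat_{h_2}=\nu^2 v^\flat_{h_1}$ invoked in the proof of Theorem \ref{thm.notin}, so the present lemma serves to isolate and justify that step.
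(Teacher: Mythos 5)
Your proposal is correct and follows essentially the same route as the paper: both arguments use that $dF_\mu$ is the identity on the underlying vector fields and then observe that the musical isomorphism $v\mapsto v^\flat$ is metric-dependent, so the conformal change $h_2=\nu^2 h_1$ turns the identity on vector fields into multiplication by $\nu^2$ on $1$-forms, via the same one-line computation $h_2(v,X)=\nu^2 h_1(v,X)=\nu^2\omega(X)$. Your version merely adds explicit scaffolding (the collapse of formula \eqref{eq.vf} for the injective map $f=\id_M$, and the composite $\omega\mapsto(\omega^{\sharp_{h_1}})^{\flat_{h_2}}$) that the paper leaves implicit.
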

\begin{proof}
	Every vector field $v\in TM$ corresponds to the covector field $\omega\in T^*M$ by $\omega(w)=h_1(v,w)$. A change of the Riemannian metric $h_1$ to $h_2=\nu^2h_1$ yields $h_2(v,w)=\nu^2 h_1(v,w) =h_1(v,\nu^2w)=\omega(\nu^2 w)=\nu^2\omega(w)$, so with respect to $h_2$, $v$ corresponds to $\nu^2 \omega$.
\end{proof}
%
%-------------------------------------------------------
\subsection{Differentiable maps between Wasserstein\- spaces}
%-------------------------------------------------------
%
As we have seen in Subsection \ref{sec.image}, the conditions of Theorem \ref{thm.ac} do not guarantee $dF_{\mu}(v)\in T_{F(\mu)}W(N)$, even though this property is neccessary for a meaningful definition of the differential of $F$. To help us here, we use the fact that $L^2(\nu)=T_\nu W(N)\oplus T_\nu^\perp W(N)$ for every $\nu\in W(N)$ and compose $dF$ with a projection onto $T_{F(\mu)} W(N)$, so that at least $P^{F(\mu)}\circ dF_\mu: T_\mu W(M)\rightarrow T_{F(M)}W(M)$ is a linear and bounded map between $T_\mu W(M)$ and $T_{F(M)}W(M)$.

\begin{dfn}
	We call $P^{\mu}$ the orthogonal linear projection 
	\begin{eqnarray}\nonumber
	P^\mu:L^2(\mu) & \longrightarrow &  T_\mu W(M)\\\nonumber
	v\ \ \ \ & \longmapsto & \ \ \ \ v^\top,	
	\end{eqnarray}
	where $v=v^\top+v^\perp$, with $v^\top\in T_\mu W(M)$ and $v^\perp\in T_\mu^\perp W(M)$.
\end{dfn}

\begin{prop}\label{prop.proj}
 For every a.c. couple $(\mu_t,v_t)$, $(\mu_t,P^{\mu_t}(v_t))$ is a tangent couple.
\end{prop}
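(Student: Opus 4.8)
The plan is to first show that $(\mu_t, P^{\mu_t}(v_t))$ is an a.c.\ couple and then invoke Remark \ref{cor.tan}, which reduces being a tangent couple to having the accompanying field lie in $T_{\mu_t}W(M)$ for almost every $t$ --- a property that holds by construction of the projection $P^{\mu_t}$. The integrability half is immediate: since $P^{\mu_t}$ is an orthogonal projection on the Hilbert space $L^2(\mu_t)$, one has $\|P^{\mu_t}(v_t)\|_{L^2(\mu_t)} \le \|v_t\|_{L^2(\mu_t)}$, and the right-hand side lies in $L^1(0,1)$ because $(\mu_t,v_t)$ is an a.c.\ couple; hence $\|P^{\mu_t}(v_t)\|_{L^2(\mu_t)} \in L^1(0,1)$ as well.

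The substantive point is to verify the continuity equation \eqref{eq.wcont} for $(\mu_t, P^{\mu_t}(v_t))$ while keeping the family Borel. Rather than differentiating $P^{\mu_t}$ in $t$ directly, I would compare $v_t$ with the minimal field. Since $\mu_t$ is a.c., Theorem \ref{them.ac} supplies a Borel family $(\tilde{v}_t)$ satisfying \eqref{eq.wcont} with $\tilde{v}_t \in T_{\mu_t}W(M)$ for almost every $t$ (the tangent field of $\mu_t$). Subtracting the two weak continuity equations for $v_t$ and $\tilde{v}_t$ gives $\int_0^1\int_M h(\nabla\varphi, v_t - \tilde{v}_t)\ d\mu_t\ dt = 0$ for all $\varphi \in \mathcal{C}_c^\infty((0,1)\times M)$. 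Testing against products $\varphi(x,t)=\psi(x)\chi(t)$ and applying the fundamental lemma of the calculus of variations in the $t$-variable, I obtain, for each fixed $\psi$, that $\int_M h(\nabla\psi, v_t-\tilde{v}_t)\ d\mu_t = 0$ for almost every $t$; running over a countable $\mathcal{C}^\infty_c$-dense family of $\psi$ and using density then yields $v_t - \tilde{v}_t \in T_{\mu_t}^\perp W(M)$ for almost every $t$.

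With this in hand the conclusion is purely algebraic. For almost every $t$ the splitting $v_t = \tilde{v}_t + (v_t - \tilde{v}_t)$ has $\tilde{v}_t \in T_{\mu_t}W(M)$ and $v_t - \tilde{v}_t \in T_{\mu_t}^\perp W(M)$, so by uniqueness of the orthogonal decomposition $P^{\mu_t}(v_t) = \tilde{v}_t$ for almost every $t$. Hence $(\mu_t, P^{\mu_t}(v_t))$ agrees almost everywhere with the tangent couple $(\mu_t, \tilde{v}_t)$ and is itself a tangent couple.

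I expect the main obstacle to be \emph{measurability}: a priori it is not clear that $t \mapsto P^{\mu_t}(v_t)$ is a Borel family, since the projection depends on $t$ through the moving subspace $T_{\mu_t}W(M)$. The comparison argument above circumvents this by identifying $P^{\mu_t}(v_t)$ with the already-Borel field $\tilde{v}_t$ from Theorem \ref{them.ac}, so that no direct regularity analysis of the projection in $t$ is required; the only care needed is the standard passage from the time-integrated orthogonality relation to its almost-everywhere pointwise-in-$t$ version.
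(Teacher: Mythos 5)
Your proof is correct, but it takes a genuinely different route from the paper's. The paper argues directly: writing $v_t = v_t^\top + v_t^\perp$, the perpendicular part lies in $T^\perp_{\mu_t}W(M) = \{w \mid \nabla\cdot(w\mu_t)=0\}$, so discarding it leaves the continuity equation untouched; combined with $\|P^{\mu_t}(v_t)\|_{L^2(\mu_t)}\le\|v_t\|_{L^2(\mu_t)}$ this makes $(\mu_t,P^{\mu_t}(v_t))$ an a.c.\ couple, and Remark~\ref{cor.tan} upgrades it to a tangent couple. You instead run a uniqueness-style comparison: you import the minimal field $\tilde v_t$ from Theorem~\ref{them.ac}, subtract the two weak continuity equations, localize in time via product test functions and a countable dense family, conclude $v_t-\tilde v_t\in T^\perp_{\mu_t}W(M)$ for a.e.\ $t$, and identify $P^{\mu_t}(v_t)=\tilde v_t$ a.e.\ by uniqueness of the orthogonal splitting. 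Both arguments pivot on the same equivalence (normal space $=$ weakly divergence-free fields $=$ orthogonal complement of gradients), just used in opposite directions. The paper's version is two lines; yours costs the time-localization step but buys something real: the a.e.\ identification with the Borel family $\tilde v_t$ settles the measurability of $t\mapsto P^{\mu_t}(v_t)$, which the paper tacitly assumes when it writes the continuity equation for the projected field and invokes Remark~\ref{cor.tan} (whose hypotheses concern Borel families), and it effectively reproves the relevant direction of Remark~\ref{cor.tan}, making the argument more self-contained. Two small points to keep honest: the countable family of test functions must be chosen independently of $t$, dense in a topology strong enough (e.g.\ $C^1$-convergence with supports in a common compact set) that gradients approximate in $L^2(\mu)$ for \emph{every} measure $\mu$ simultaneously; and the claim that Theorem~\ref{them.ac}'s field satisfies $\tilde v_t\in T_{\mu_t}W(M)$ a.e.\ is not literally in that theorem's statement --- it follows from minimality together with Remark~\ref{cor.tan}, which is how the paper's own framework justifies calling it the tangent field.
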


\begin{proof}
	Let $(\mu_t,v_t)$ be an a.c. couple, then, for $v_t=v_t^\top+v_t^\perp$ we have
	\begin{equation}\nonumber
	\frac{d}{dt}\mu_t+\nabla\cdot(v_t^\top\mu_t)=\frac{d}{dt}\mu_t+\nabla\cdot((v_t^\top+v_t^\perp)\mu_t)=0.
	\end{equation}
	And since $\|P^{\mu_t}(v_t)\|_{L^2(\mu_t)} \leq \|v_t\|_{L^2(\mu_t)}$ we have also $\|P^{\mu_t}(v_t)\|_{L^2(\mu_t)} \in L^1(0,1)$. Thus, $(\mu_t,P^{\mu_t}(v_t))$ is an a.c. couple and with Remark \ref{cor.tan} a tangent couple.
\end{proof}

%This means that, even if $dF_\mu$ does not hit $T_{F(\mu)}W(M)$, at least $P^{F(\mu)}\circ dF_\mu: T_\mu W(M)\rightarrow T_{F(M)}W(M)$ is a linear and bounded map between $T_\mu W(M)$ and $T_{F(M)}W(M)$ and for every tangent couple $(\mu_t,v_t)$, $\left(F(\mu_t),(P^{F(\mu_t)}\circ dF_{\mu_t})(v_t)\right)$ is a tangent couple, too.

With the observations we have collected so far, we can finally give our definition of a differentiable map between Wasserstein spaces.

\begin{dfn}[\emph{\textbf{Differentiable map between Wasserstein spaces}}]\label{dfn.diff}
An absolutely continuous map $F:W(M)\rightarrow W(N)$ is called \emph{differentiable} in case for every $\mu\in W(M)$ there exists a bounded linear map $dF_{\mu}:T_\mu W(M) \rightarrow T_{F(\mu)}W(N)$ such that for every tangent couple $(\mu_t,v_t)$ the image curve $dF_{\mu_t}(v_t)$ is a tangent vector field of $F(\mu_t)$. In this way a bundle map\footnote{In our sense of the word ``bundle''.} $dF:TW(M) \rightarrow TW(N)$ is defined which we want to call the \emph{differential} of F.
\end{dfn}
 When we say a map $F:W(M)\rightarrow W(N)$ is differentiable we automatically mean that it is absolutely continuous in the first place. 
\begin{remark}\label{rem.dfn}
	 The reader might be surprised that we only give a global definition of differentiability, without having started with a pointwise definition. The latter is difficult, if at all possible, since the tangent vector fields $v_t$ are only defined for a.e. $t\in[0,1]$, so a pointwise evaluation of these is not well-defined. The situation would change if one would be able to speak about \emph{continuous} curves of tangent vector fields, but it doesn't seem to be so easy to make this notion precise: For differing $t,t'$ the vector fields $v_t$ and $v_{t'}$ are elements of different tangent spaces, potentially even of different dimension, which is why the usual notion of continuity cannot be trivially applied.
\end{remark}
 Note again that $dF_{\mu_t}(v_t)$ is only well-defined almost everywhere, since $v_t$ is. But this is not harmful to our definition since in particular also the tangent vectors of $F(\mu_t)$ are only well-defined almost everywhere. But in this same manner, Definition \ref{dfn.diff} does not guarantee uniqueness of $dF$ in a strict sense. (Here we mean that $dF=\widetilde{dF}$ whenever $dF_\mu(v)=\widetilde{dF}_\mu(v)$ for all $(\mu,v) \in TW(M)$.) But, after all, one can say that $dF$ is unique up to a ``negligible'' set. 
\begin{dfn}[\emph{\textbf{Negligible set}}]\label{dfn.neg}
	A subset $Z\subset TW(M)$ is called \emph{negligible} whenever for every tangent couple $(\mu_t,v_t)$ the set $\{t\in (0,1)\mid (\mu_t,v_t)\in Z\}$ is of Lebesgue measure zero.
\end{dfn}
	\noindent This definition respects the $L^1(dt)$-nature of the $v_t$'s in the sense that changing any $v_t$ on a set of measure zero does not change the measure of the set $\{t\in (0,1)\mid (\mu_t,v_t)\in Z\}$.
\begin{prop}[\emph{\textbf{Uniqueness of the differential}}]
The differential $dF$ of a differentiable map $F:W(M)\rightarrow W(N)$ is unique up to a redefinition on a negligible set $Z\subset TW(M)$.
\end{prop}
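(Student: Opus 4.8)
The plan is to suppose that $dF$ and $\widetilde{dF}$ are two bundle maps, each a differential of $F$ in the sense of Definition~\ref{dfn.diff}, and to prove that the set
\[
Z := \{(\mu,v)\in TW(M)\mid dF_\mu(v)\neq \widetilde{dF}_\mu(v)\}
\]
on which they disagree is negligible in the sense of Definition~\ref{dfn.neg}. By that definition, it suffices to show that for every tangent couple $(\mu_t,v_t)$ the set $\{t\in(0,1)\mid dF_{\mu_t}(v_t)\neq\widetilde{dF}_{\mu_t}(v_t)\}$ has Lebesgue measure zero.

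First I would fix an arbitrary tangent couple $(\mu_t,v_t)$. Because $F$ is absolutely continuous and $\mu_t$ is a.c., the image curve $F(\mu_t)$ agrees, off a $t$-null set, with an absolutely continuous curve in $W(N)$. By Theorem~\ref{them.ac} together with Remark~\ref{cor.tan}, such a curve admits an accompanying vector field $w_t$ with $w_t\in T_{F(\mu_t)}W(N)$ for a.e.\ $t$; moreover $w_t$ is the \emph{minimal} such field, and, as recalled in the discussion of tangent couples following Theorem~\ref{them.ac}, it is unique up to a set of $t$-measure zero. This last uniqueness is the only nontrivial ingredient.

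The core of the argument is then immediate: by Definition~\ref{dfn.diff}, both $dF_{\mu_t}(v_t)$ and $\widetilde{dF}_{\mu_t}(v_t)$ are tangent vector fields of one and the same curve $F(\mu_t)$. Hence each coincides with the minimal accompanying field $w_t$ for almost every $t$, so that $dF_{\mu_t}(v_t)=\widetilde{dF}_{\mu_t}(v_t)$ for a.e.\ $t$. Equivalently, $\{t\mid(\mu_t,v_t)\in Z\}$ is Lebesgue-null. As the tangent couple was arbitrary, $Z$ is negligible, which is exactly the assertion.

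The step I expect to be the main obstacle --- or at least the one demanding care --- is the uniqueness of $w_t$ up to $t$-null sets, i.e.\ that ``the tangent vector field of $F(\mu_t)$'' is a well-defined object. Since the $w_t$ exist only for a.e.\ $t$, the whole argument must be conducted at the level of $L^1(dt)$-families and lean on the strict-convexity uniqueness statement rather than on any pointwise evaluation; the formalism of negligible sets in Definition~\ref{dfn.neg} is designed precisely so that this almost-everywhere-in-$t$ coincidence yields the global uniqueness claimed for $dF$.
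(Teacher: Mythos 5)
Your argument is correct as far as it goes, and for the direction you treat it is essentially the paper's own proof: by Definition~\ref{dfn.diff}, both $dF_{\mu_t}(v_t)$ and $\widetilde{dF}_{\mu_t}(v_t)$ are tangent vector fields along the same curve $F(\mu_t)$, and since the tangent (i.e.\ minimal accompanying) vector field of an a.c.\ curve is unique up to a $t$-null set --- the strict-convexity uniqueness you correctly single out as the key ingredient --- they agree for a.e.\ $t$; as the tangent couple was arbitrary, the disagreement set $Z$ is negligible. The difference is one of scope: the paper reads the proposition as an equivalence and proves \emph{both} implications. Besides the direction you establish (any two differentials agree off a negligible set), it also proves the converse: if $\widetilde{dF}$ is a pointwise linear and bounded bundle map differing from a differential $dF$ only on a negligible set, then $\widetilde{dF}$ is itself a differential of $F$ --- indeed, for each tangent couple $(\mu_t,v_t)$ the field $\widetilde{dF}_{\mu_t}(v_t)$ differs from $dF_{\mu_t}(v_t)$ only for a null set of $t$'s and hence is still a.e.\ equal to the tangent field along $F(\mu_t)$. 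This converse is what substantiates the phrase ``up to a redefinition'': it shows the negligible-set ambiguity is genuinely realized, so that (after restricting to pointwise linear and bounded bundle maps, as the remark following Proposition~\ref{prop.prop} stresses) the set of differentials of $F$ is precisely one equivalence class. Your proof establishes uniqueness but not that redefinitions remain differentials; adding this short converse argument would make it match the paper's proof in full.
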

\begin{proof}
	Let $dF$ and $\widetilde{dF}$ be two pointwise linear bundle maps, $dF$ being the differential of an a.c. map $F$. It is to show that $dF$ and $\widetilde{dF}$ are both a differential of $F$ if and only if $\{(\mu,v)\in TW(M)\mid dF_\mu(v)\neq\widetilde{dF}_\mu(v)\}$ is negligible.\\
	Let $dF$ and $\widetilde{dF}$ be different only on a negligible set. In this case, for each tangent couple $(\mu_t,v_t)$ the image velocities $\widetilde{dF}_{\mu_t}(v_t)$ are different from the ones of $dF_{\mu_t}(v_t)$ only on a null set and thus still equal the tangent vector fields along $F(\mu_t)$ almost everywhere.
	Let on the other hand $dF$ and $\widetilde{dF}$ both fulfill the conditions of Definition \ref{dfn.diff}. By definition, for each tangent couple $(\mu_t,v_t)$ both $dF_{\mu_t}(v_t)$ and $\widetilde{dF}_{\mu_t}(v_t)$ are equal almost everywhere to the tangent vectors along $F(\mu_t)$. Thus, for every tangent couple $(\mu_t,v_t)$, $\{t\in(0,1)\mid dF_{\mu_t}(v_t)\neq\widetilde{dF}_{\mu_t}(v_t)\}$ has Lebesgue measure zero.
\end{proof} 	
%
%family of linear maps $dF_{\mu}:T_\mu W(M) \rightarrow T_{F(\mu)}W(N)$ such that for every absolutely continuous couple $(\mu_t,v_t)$ with $\mu_0=\mu$ and $v_0=v$ the tangent vector field $\tilde{v}_t$ of the curve $F(\mu_t)$ is such that $\tilde{v}_0=dF(\tilde{v}_0)$ whenever $\tilde{v}_0$ is defined.
%formed in the following way. Let $v\in T_{\mu}W(M)$ and $(\mu_t, v_t), t\in (-\varepsilon,\varepsilon)$, any [pair] such that $\mu_0=\mu$, $v_0=v$. Then we define $dF(v)=w_0$, where $w_t\in L^2(F(\mu_t),N)$ is the unique minimal vector field satisfying the continuity equation together with $(F(\mu_t))_{t\in(-\varepsilon,\varepsilon)}$.
%
Let us now analyse some properties of negligible sets.
\begin{prop}\label{prop.prop}
	\begin{itemize}
		\item[1.)] $T_\mu(W(M))\setminus\{0\}$ is negligible, for every $\mu\in W(M)$. But $T_\mu(W(M))$ isn't. 
		\item[2.)] The countable union of negligible sets is negligible.
		\item[3.)] Every subset of a negligible set is negligible.
		\item[4.)] The following is an equivalence relation on the set of mappings between tangent bundles on Wasserstein spaces:\\  $F\sim G:\Leftrightarrow \{(\mu,v)\in TW(M)\mid F(\mu,v)\neq G(\mu,v)\}$ is negligible.
	\end{itemize}
\end{prop}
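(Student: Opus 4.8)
The plan is to notice that items 2., 3., and the equivalence relation in 4.\ are purely measure-theoretic consequences of Definition \ref{dfn.neg}, while all the genuine content sits in the first assertion of 1. I would therefore isolate that one analytic lemma and treat the remaining bookkeeping quickly.

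For the first claim of 1., fix $\mu$ and put $Z=T_\mu W(M)\setminus\{0\}$. For a tangent couple $(\mu_t,v_t)$ one has $(\mu_t,v_t)\in Z$ exactly when $\mu_t=\mu$ and $v_t\neq 0$, so it suffices to show $v_t=0$ in $L^2(\mu_t)$ for almost every $t$ in the level set $A:=\{t\in(0,1)\mid \mu_t=\mu\}$. Since $(\mu_t,v_t)$ is a tangent couple, its defining minimality gives $\|v_t\|_{L^2(\mu_t)}=|\dot\mu_t|$ for a.e.\ $t$ (Theorem \ref{them.ac} together with Remark \ref{cor.tan}), so the claim reduces to the purely metric statement that the metric derivative of an absolutely continuous curve vanishes almost everywhere on any set where the curve is constant. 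I would prove this by reparametrising by arc length: writing $s(t)=\int_0^t|\dot\mu_r|\,dr$, there is a $1$-Lipschitz curve $\tilde\mu$ with $\mu_t=\tilde\mu_{s(t)}$ and $|\dot{\tilde\mu}|=1$ a.e. A density-point argument applied to the Lipschitz curve $\tilde\mu$ shows its metric derivative vanishes a.e.\ on $\{\sigma\mid\tilde\mu_\sigma=\mu\}$; as that derivative equals $1$ a.e., this level set is $\lambda$-null, and the change-of-variables inequality $\int_{s^{-1}(E)}s'\,dt\le\lambda(s(E))$ for the monotone absolutely continuous $s$ then forces $|\dot\mu_t|=s'(t)=0$ a.e.\ on $A=s^{-1}(\{\sigma\mid\tilde\mu_\sigma=\mu\})$. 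The second claim of 1.\ is immediate: the constant curve $\mu_t\equiv\mu$ with $v_t\equiv 0$ is a tangent couple with $(\mu_t,v_t)=(\mu,0)\in T_\mu W(M)$ for every $t$, so $\{t\mid(\mu_t,v_t)\in T_\mu W(M)\}=(0,1)$ has full measure and $T_\mu W(M)$ fails to be negligible.

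Items 2.\ and 3.\ are then direct. For 2., for every tangent couple $\{t\mid(\mu_t,v_t)\in\bigcup_n Z_n\}=\bigcup_n\{t\mid(\mu_t,v_t)\in Z_n\}$ is a countable union of Lebesgue-null sets, hence null; for 3., any subset of a negligible set produces, for each tangent couple, a preimage contained in a null set, which is null by completeness of Lebesgue measure. For the equivalence relation in 4., reflexivity holds because $\{(\mu,v)\mid F(\mu,v)\neq F(\mu,v)\}=\emptyset$ is (trivially) negligible, symmetry because the defining set is unchanged under swapping $F$ and $G$, and transitivity follows from the inclusion $\{F\neq H\}\subseteq\{F\neq G\}\cup\{G\neq H\}$ combined with items 2.\ and 3.

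The main obstacle is the single analytic input behind the first claim of 1.: that the metric derivative of an absolutely continuous curve vanishes almost everywhere on a level set. Everything else is formal manipulation of null sets. If one wishes to avoid the arc-length reparametrisation, one can argue directly at density points $t_0$ of $A$, using absolute continuity to produce $s\in A$ with $|s-(t_0+h)|\le\delta|h|$ and estimating $d(\mu_{t_0+h},\mu_{t_0})=d(\mu_{t_0+h},\mu_s)\le\int|\dot\mu_r|\,dr$ over a short interval; but making this rigorous for a merely $L^1$ metric speed is precisely where the reparametrisation streamlines the argument, which is why I would adopt it.
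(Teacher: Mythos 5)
Your proof is correct, and for items 2.), 3.), 4.) and the non-negligibility claim in 1.) it essentially coincides with the paper's (your explicit inclusion $\{F\neq H\}\subseteq\{F\neq G\}\cup\{G\neq H\}$ combined with 2.) and 3.) is in fact a cleaner justification of transitivity than the paper's terse ``follows from 1.) and 2.)''). The genuine difference is in the first claim of 1.). The paper does not reparametrise at all: it observes that if $a$ is a non-isolated point of the level set $T_\mu=\{t\mid\mu_t=\mu\}$, then there are $t_n\to a$ in $T_\mu$, so the difference quotients $W(\mu_{t_n},\mu_a)/|t_n-a|$ vanish along this sequence and the metric derivative at $a$ either fails to exist or equals $0$; since for a tangent couple $|\dot\mu_t|$ exists and equals $\|v_t\|_{L^2(\mu_t)}$ for a.e.\ $t$, the times in $T_\mu$ with $v_t\neq 0$ are (up to a null set) isolated points of $T_\mu$, hence countably many, hence Lebesgue-null. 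This is shorter and more elementary than your chain (arc-length reparametrisation, density points of the Lipschitz curve, monotone change of variables), whereas your route is the more robust and careful one: the paper's write-up is loose about ``every $t$'' versus ``a.e.\ $t$'', and both arguments ultimately prove the same underlying fact that the metric speed of an a.c.\ curve vanishes a.e.\ on a level set. One slip in your write-up: the displayed inequality $\int_{s^{-1}(E)}s'\,dt\le\lambda(s(E))$ does not typecheck, since $E$ lives in the image of $s$; what you need (and what is true for a nondecreasing absolutely continuous $s$) is $\int_{s^{-1}(E)}s'\,dt=\int_E\#\bigl(s^{-1}(y)\bigr)\,dy$, which vanishes when $\lambda(E)=0$ because the multiplicity $\#\bigl(s^{-1}(y)\bigr)$ exceeds $1$ only for the countably many plateau values of $s$. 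With that correction your argument is complete.
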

\begin{remark}
Let $dF$ be a differential of a map $F:W(M)\rightarrow W(N)$. Then there are members of its equivalence class $[dF]$ which are not a differential of $F$ since not every member has to be pointwise linear and bounded. Restricting, however, the equivalence relation onto the subset of pointwise linear and bounded maps between tangent bundles of Wasserstein spaces solves this issue. In this case $[dF]$ contains precisely all the possible differentials of $F$. Whenever we refer to a representative of $dF$, we mean an element of the latter equivalence class.
\end{remark}
\begin{proof}
	\begin{itemize}
		\item [1.)] Let $(\mu_t,v_t)$ be a tangent couple, $v_t$ a fixed representative of $v_t\in L^1(dt)$ and $T_\mu:=\{t\in(0,1)\mid \mu_t=\mu,\ v_t\in T_\mu W(M)\}$ for some $\mu\in W(M)$. Let us further assume that $v_t\neq0$ for every $t\in T_\mu$ which in particular means that $|\dot{\mu}_t|\neq0$ for every $t\in T_\mu$. From this we can also infer that for no $t_0\in T_\mu$ there exists a neighborhood on which $\mu_t$ is constant. Let $a\in T_\mu$ be a point which is not isolated. This means that in every neighborhood of $a$ is another point of $T_\mu$. The consequence of this would be that the metric derivative would not exist at that point which we excluded in the definition of $T_\mu$.
		%This can only be the case for a subset of $T_\mu$ of zero measure. All the other points of $T_\mu$ must consequently be isolated of which there can only be countably many. \\
		So $T_\mu$ must consist of only isolated points and thus must be countable. Choosing another representative of $v_t\in L^1(\mu)$ only changes the amount of $t$'s in $T_\mu$ by a null set.\\
		$T_\mu(W(M))$  is not negligible since $\mu_t=\mu$ is absolutely continuous with metric derivative $0$.
		\item [2.)] This follows from the fact that any countable union of sets of measure zero again is of measure zero.
		\item [3.)] Let $N$ be a subset of a negligible set and $(\mu_t,v_t)$ an a.c. curve with a fixed representative $v_t$. The amount of times where $(\mu_t,v_t)\in N$ can only be a subset of a set of zero measure. Since the Lebesgue measure is a complete measure this subset itself is measurable and in particular of measure zero.		
		\item[4.)] This follows from 1.) and 2.)
	\end{itemize}
\end{proof}
The following corollary finally recovers the properties expected of a differential.
\begin{corollary}\label{cor.prop}
\begin{itemize}
\item[1.) ] In case $F=f_\#$ and $f$ is as in Theorem \ref{thm.ac}, $F$ is differentiable with $dF_\mu=P^{F(\mu)}\circ \widehat{dF}_\mu$, where 
$P^{F(\mu)}$ is the orthogonal projection onto $T_{F(\mu)}N$ from Proposition \ref{prop.proj} and 
\begin{equation}\nonumber
\widehat{dF}_{\mu}(v)_y:= \int_{f^{-1}({y})}df(v_{x})d\mu^y(x), 
\end{equation}
as in formula \eqref{eq.vf}. In case $f$ is a Riemannian isometry, the additional projection $P$ is not necessary, as we have seen in Corollary \ref{cor.tang}. Then, $dF_\mu=df$ for all $\mu\in W(M)$.
\item[2.) ] In particular, the identity mapping $F(\mu)=\mu$ is differentiable with $dF_\mu(v)=v$ up to a negligible map.
\item[3.) ] Let $F:W(M)\rightarrow W(N)$ and $G:W(N)\rightarrow W(O)$ be two differentiable maps. Then also $G\circ F:W(M)\rightarrow W(O)$ is differentiable with $d(G\circ F)_\mu(v)=\left(dG_{F(\mu)}\circ dF_\mu\right)(v)$ up to a negligible set.
\item[4.) ] Whenever $F$ is differentiable, bijective with differentiable inverse $F^{-1}$, then $dF$ is also invertible with inverse $d(F^{-1})$, up to a negligible set. 
\end{itemize}
\end{corollary}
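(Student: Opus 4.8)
The plan is to handle the four parts in sequence, noting that parts 1 and 2 are essentially assembled from results already proved, while parts 3 and 4 carry the (modest) genuine content. For part 1 I would combine Theorem \ref{thm.ac} with Proposition \ref{prop.proj}: Theorem \ref{thm.ac} already shows that $F=f_\#$ is absolutely continuous and that, for every tangent couple $(\mu_t,v_t)$, the pair $(F(\mu_t),\widehat{dF}_{\mu_t}(v_t))$ is an a.c. couple. Feeding this a.c. couple into Proposition \ref{prop.proj} shows that $(F(\mu_t),P^{F(\mu_t)}\circ\widehat{dF}_{\mu_t}(v_t))$ is a tangent couple, i.e. that the projected field is tangent along $F(\mu_t)$. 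It then remains to observe that $dF_\mu:=P^{F(\mu)}\circ\widehat{dF}_\mu$ is bounded and linear, being the composition of the bounded map $\widehat{dF}_\mu$ from Proposition \ref{thm.bd} with the norm-nonincreasing orthogonal projection $P^{F(\mu)}$, and that it lands in $T_{F(\mu)}W(N)$ by construction; this is exactly what Definition \ref{dfn.diff} requires. The isometry refinement follows because Corollary \ref{cor.tang} guarantees $\widehat{dF}_\mu(v)=df(v)$ already lies in $T_{F(\mu)}W(M)$, so the projection acts trivially and $dF_\mu=df$. Part 2 is then immediate: the identity is $F=\id_\#$, and $\id$ is a Riemannian isometry, so the isometry case of part 1 yields $dF_\mu(v)=v$, the negligible-set qualifier being forced only by the non-uniqueness already established.

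For part 3 I would first note that $G\circ F$ is absolutely continuous, since applying Definition \ref{def.ac} twice shows a composition of a.c. maps sends a.c. curves to a.c. curves, up to redefinition on a Lebesgue-null set of $t$. For the differential, fix a tangent couple $(\mu_t,v_t)$. Differentiability of $F$ says that $(F(\mu_t),dF_{\mu_t}(v_t))$ is a tangent couple; feeding this tangent couple into differentiability of $G$ says that $dG_{F(\mu_t)}(dF_{\mu_t}(v_t))$ is a tangent vector field along $G(F(\mu_t))$. Since $dG_{F(\mu)}\circ dF_\mu$ is a composition of bounded linear maps $T_\mu W(M)\to T_{F(\mu)}W(N)\to T_{G(F(\mu))}W(O)$, it is itself bounded and linear with the correct source and target for every $\mu$, so it satisfies Definition \ref{dfn.diff} and is a legitimate differential of $G\circ F$. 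By the uniqueness proposition any differential of $G\circ F$ agrees with it off a negligible set.

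For part 4 I would apply parts 2 and 3 to the identities $F^{-1}\circ F=\id_{W(M)}$ and $F\circ F^{-1}=\id_{W(N)}$. Part 3 gives $d(F^{-1}\circ F)_\mu=dF^{-1}_{F(\mu)}\circ dF_\mu$ up to a negligible set, while part 2 gives $d(\id_{W(M)})_\mu=\id$ up to a negligible set; since both expressions compute the differential of one and the same map, uniqueness forces $dF^{-1}_{F(\mu)}\circ dF_\mu=\id$ off a negligible set, and symmetrically $dF_{F^{-1}(\nu)}\circ dF^{-1}_\nu=\id$. Hence $dF$ is invertible with inverse $d(F^{-1})$ up to a negligible set.

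The main obstacle is the careful bookkeeping of the negligible-set qualifiers in part 3, and its propagation into part 4: one must verify that the exceptional $t$-sets arising separately from the a.c.-redefinition of $F(\mu_t)$, from the almost-everywhere definition of the tangent fields $dF_{\mu_t}(v_t)$, and from the defining property of $dG$ are each Lebesgue-null, so that their union is still null and the chained identity $dG_{F(\mu_t)}(dF_{\mu_t}(v_t))$ genuinely holds almost everywhere along every tangent couple. Once this is organized through Definition \ref{dfn.neg} and the closure properties in Proposition \ref{prop.prop}, the remaining algebraic content of all four parts is routine.
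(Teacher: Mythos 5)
Your proposal is correct and follows essentially the same route as the paper: part 1 from Theorem \ref{thm.ac} plus Proposition \ref{prop.proj} (with boundedness via Proposition \ref{thm.bd}), part 3 by chaining tangent couples through the definitions of differentiability of $F$ and $G$, and part 4 by applying parts 2 and 3 to $F^{-1}\circ F$ and $F\circ F^{-1}$ together with uniqueness of the differential. The paper's proof is merely terser (parts 1, 2 and 4 are dispatched in one line each), so your version simply supplies the bookkeeping the authors leave implicit.
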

\begin{proof}
\begin{itemize}
\item[1.)] This follows from Theorem \ref{thm.ac} and Proposition \ref{prop.proj}.
\item[2.)] This is immediate.
\item[3.)] First we observe that the composition of two absolutely continuous maps between Wasserstein spaces is again absolutely continuous. Also, the composition of two bounded linear maps is again a bounded linear map. To show differentiability, we will check that $dG_{F(\mu)}\circ dF_\mu:T_\mu W(M)\rightarrow T_{(G\circ F)(\mu)}W(O)$ is such that for every tangent couple $(\mu_t,v_t)$, also $((G\circ F)(\mu_t),(dG_{F(\mu)}\circ dF_\mu)(v_t))$ is a tangent couple. So let $(\mu_t,v_t)$ be a tangent couple. Since $F$ is differentiable, we know that $(F(\mu_t),dF_{\mu_t}(v_t))$ is a tangent couple. Similarly, also $\left(G(F(\mu_t)),dG_{F(\mu_t)}(dF_{\mu_t}(v_t))\right)$ is a tangent couple. Since $G(F(\mu_t))=(G\circ F)(\mu_t)$ and $dG_{F(\mu_t)}(dF_{\mu_t}(v_t))=(dG_{F(\mu_t)}\circ dF_{\mu_t})(v_t)$, we have proven the claim.
\item[4.)] This is an immediate consequence of 2.) and 3.).
\end{itemize}
\end{proof}
\begin{remark}
		Let us again emphasize that this type of differentiability is highly tailored to the structure given by optimal transport. It knowingly does not fit into the framework of, e.g., \cite{Kriegl_1997}. Nevertheless, let us mention that also in this reference, the notion of differentiable maps between infinite dimensional manifolds is established via the property that differentiable curves should be mapped to differentiable curves. 
\end{remark}

\subsection{Pullbacks and formal Riemannian isometries}

As an application of the previous section, we propose a definition for the pullback of the formal Riemannian tensor on $W_2(M)$ and furthermore a definition for formal Riemannian isometries. As the formal Riemannian metric was defined by comparison of formulae to actual Riemannian structures (see Definition \ref{df.Riem}), the performance of pullbacks now gives rise to definitions of further possible formal Riemannian metrics on $W_2$-spaces, in cases where $dF_\mu$ is injective for every $\mu$, i.e. in case $F$ can be considered to be an immersion. 
\begin{dfn}[\emph{\textbf{Pullback of the formal Riemannian tensor}}]\label{df.pb}
	Let $F:W(N)\rightarrow W(M)$ be differentiable, $dF$ be a fixed differential of $F$, $\mu\in W(N)$ and $H_{F(\mu)}$ the formal Riemannian metric tensor on $W(M)$ at point $F(\mu)\in W(M)$. Then, for $v,w\in T_\mu W(M)$, the \emph{pullback} $(F^*H)_\mu$ of $H_{F(\mu)}$ is defined as $$(F^*H)_\mu(v,w):=H_{F(\mu)}(dF_\mu(v),dF_\mu(w)).$$
\end{dfn}
Unfortunately, this definition depends on the choice of the differential of $F$, which is, as we have seen, only unique up to a negligible set. 
\begin{dfn}[\emph{\textbf{Formal Riemannian isometry}}]	\label{df.riso}
	Analogously to the finite dimensional case, we call a bijective differentiable map $F:W(M)\rightarrow W(M)$ with differentiable inverse a \emph{formal Riemannian isometry}, in case there is a representative of $dF$ such that for all $\mu\in W(M)$ $(F^*H)_\mu(v,w)=H_\mu(v,w)$ for all $(v,w)\in T_\mu W(M)\times T_\mu W(M)$.	
\end{dfn}
%We want to call such a representative as in Definition \ref{df.riso} \emph{suitable}.
It is straightforward to see that $F$ is a formal Riemannian isometry iff there is a representative of $dF$ such that for every $\mu\in W(M)$ $dF_\mu:T_\mu W(M)\rightarrow T_{F(\mu)} W(M)$ is a metric isometry with respect to the metrics induced by the $L^2$-norms.

Important formal Riemannian isometries are generated by the isometry group of the underlying metric space. By means of the pushforward, $ISO(M)$ acts isometrically also on $\PM_p$ and the map
	\begin{eqnarray*}
	G\times TW(M) &\rightarrow& TW(M)\\
	\left(g,(\mu,v)\right) &\mapsto& (g_\#\mu,dg(v))
    \end{eqnarray*}
defines an induced action of every subgroup $G$ of $ISO(M)$ on the tangent bundle of $W(M)$, where we regard $dg$ as a differential of $g_\#$. It is quick to check that for $g\in ISO(M)$, $g_\#:W(M)\rightarrow W(M)$ is a formal Riemannian isometry.

\begin{lemma}\label{lem.giso}
	Let $g\in ISO(M)$, then $T_{g_\#\mu} W(M)=dg\left(T_\mu W(M)\right)$ for all $\mu\in W(M)$. Here, we again regard $dg$ as a, fixed, differential of $g_\#$.
\end{lemma}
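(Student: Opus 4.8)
The plan is to prove the two inclusions $dg(T_\mu W(M)) \subseteq T_{g_\#\mu} W(M)$ and $T_{g_\#\mu} W(M) \subseteq dg(T_\mu W(M))$ separately: the first is essentially Corollary \ref{cor.tang} read off directly, and the second follows by applying the same corollary to the inverse isometry $g^{-1}$ together with the observation that, for an isometry, $dg$ is an isometric isomorphism of the relevant $L^2$-spaces whose inverse is $d(g^{-1})$.

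First I would record the structural facts that drive everything. Since $g$ is bijective, formula \eqref{eq.vf} for $f=g$ has a singleton fiber, so $dg$ is just the pushforward of vector fields, $(dg\,v)_y = dg_{g^{-1}(y)}(v_{g^{-1}(y)})$. Using $g^*h=h$ and the change of variables $y=g(x)$, one gets $\|dg\,v\|_{L^2(g_\#\mu)} = \|v\|_{L^2(\mu)}$, so $dg\colon L^2(TM,\mu)\to L^2(TM,g_\#\mu)$ is a linear isometry. Because $g^{-1}$ is again a Riemannian isometry and $d(g^{-1})\circ dg = d(g^{-1}\circ g)=\id$ pointwise (and symmetrically $dg\circ d(g^{-1})=\id$), the map $d(g^{-1})\colon L^2(TM,g_\#\mu)\to L^2(TM,\mu)$ is a two-sided inverse of $dg$; hence $dg$ is a unitary isomorphism of these two Hilbert spaces. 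Then Corollary \ref{cor.tang} applied to $g$ says precisely that for $v=\lim_n \nabla\varphi_n\in T_\mu W(M)$ one has $dg(v)=\lim_n \nabla(\varphi_n\circ g^{-1})\in T_{g_\#\mu}W(M)$, which is the first inclusion $dg(T_\mu W(M))\subseteq T_{g_\#\mu}W(M)$.

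For the reverse inclusion I would set $\nu := g_\#\mu$ and apply Corollary \ref{cor.tang} to the isometry $g^{-1}$ with base measure $\nu$. Since $(g^{-1})_\#\nu = (g^{-1})_\# g_\#\mu = \mu$, the corollary yields $d(g^{-1})\bigl(T_\nu W(M)\bigr)\subseteq T_\mu W(M)$. Applying the isometry $dg$ to both sides and using $dg\circ d(g^{-1})=\id$ on $L^2(TM,\nu)$ gives $T_\nu W(M) = dg\bigl(d(g^{-1})(T_\nu W(M))\bigr)\subseteq dg(T_\mu W(M))$, i.e. $T_{g_\#\mu}W(M)\subseteq dg(T_\mu W(M))$. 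Combining the two inclusions yields the claimed equality.

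I do not expect a serious obstacle here; the one point requiring care is the second paragraph, namely that $dg$ and $d(g^{-1})$ invert each other genuinely as maps between the two \emph{different} $L^2$-spaces $L^2(TM,\mu)$ and $L^2(TM,g_\#\mu)$, and not merely as formal operations on pointwise vector fields. This hinges entirely on the isometry identity $\|dg\,v\|_{L^2(g_\#\mu)}=\|v\|_{L^2(\mu)}$, which in turn is just the change-of-variables computation built on $g^*h=h$; once that is in place, the surjectivity of $dg$ onto $T_{g_\#\mu}W(M)$ is automatic.
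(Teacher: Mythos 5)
Your proof is correct. There is, in fact, nothing in the paper to compare it against: Lemma \ref{lem.giso} is stated without proof, so your argument fills a gap rather than duplicating one. Your two-inclusion structure uses exactly the machinery the paper has set up: Corollary \ref{cor.tang} applied to $g$ gives $dg(T_\mu W(M))\subseteq T_{g_\#\mu}W(M)$, and applying it to $g^{-1}$ (which is again a Riemannian isometry, with $(g^{-1})_\#g_\#\mu=\mu$) together with the pointwise inverse relation $dg\circ d(g^{-1})=\id$ on $L^2(TM,g_\#\mu)$ gives the reverse inclusion; the key supporting facts (that $dg$ acts as a pushforward of vector fields because the fibers of $g$ are singletons, and that it is an $L^2$-isometry by $g^*h=h$ and change of variables) are consistent with Lemma \ref{lem.isograd}, Proposition \ref{thm.bd} and the computation in Example \ref{lem.preisom}. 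If you want a marginally shorter packaging, note that a linear isometry defined on a complete space carries closures onto closures of images, and that $\varphi\mapsto\varphi\circ g^{-1}$ is a bijection of $\mathcal{C}_c^\infty(M)$; hence
\begin{equation*}
dg\left(T_\mu W(M)\right)=dg\left(\overline{\{\nabla\varphi\}}^{L^2(\mu)}\right)=\overline{\{\nabla(\varphi\circ g^{-1})\}}^{L^2(g_\#\mu)}=\overline{\{\nabla\psi\mid\psi\in \mathcal{C}_c^\infty(M)\}}^{L^2(g_\#\mu)}=T_{g_\#\mu}W(M)
\end{equation*}
in one stroke, with no second appeal to Corollary \ref{cor.tang}; but this is the same argument as yours, merely repackaged.
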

%
%Of course, this lemma holds true for every bijective differentiable map $F$ whose inverse map is differentiable, too.

\begin{prop}\label{prop.infriem}
	Every formal Riemannian isometry is an isometry in the metric sense of its Wasserstein space. 	
\end{prop}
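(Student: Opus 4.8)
The plan is to use the dynamical picture underlying the Benamou--Brenier formula (Theorem \ref{thm.bb}) together with the norm-preserving characterization of formal Riemannian isometries stated right after Definition \ref{df.riso}. First I would fix the representative of $dF$ for which $dF_\mu:T_\mu W(M)\to T_{F(\mu)}W(M)$ is a linear $L^2$-isometry for every $\mu$. Being pointwise linear and bounded (of operator norm $1$), this representative is itself a genuine differential in the sense of Definition \ref{dfn.diff}, so along any tangent couple it \emph{simultaneously} preserves norms and sends tangent vector fields to tangent vector fields. The goal is then to show $W(F(\mu),F(\nu))=W(\mu,\nu)$ for all $\mu,\nu$.

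For the upper bound $W(F(\mu),F(\nu))\le W(\mu,\nu)$ I would exploit that a complete Riemannian $M$ is geodesic, hence $W(M)$ is geodesic. Pick a constant speed geodesic $\mu_t$ from $\mu$ to $\nu$ and let $(\mu_t,v_t)$ be its tangent couple; then $\int_0^1\|v_t\|_{L^2(\mu_t)}\,dt=\int_0^1|\dot\mu_t|\,dt=W(\mu,\nu)$. Since $F$ is differentiable, $(F(\mu_t),dF_{\mu_t}(v_t))$ is again a tangent couple, so $t\mapsto F(\mu_t)$ is an a.c.\ curve joining $F(\mu)$ and $F(\nu)$ whose metric derivative equals $\|dF_{\mu_t}(v_t)\|_{L^2(F(\mu_t))}$ for a.e.\ $t$ (Theorem \ref{them.ac} and Remark \ref{cor.tan}). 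Using the isometry identity $\|dF_{\mu_t}(v_t)\|_{L^2(F(\mu_t))}=\|v_t\|_{L^2(\mu_t)}$ and the elementary metric fact that the distance between endpoints is bounded by the length of any joining a.c.\ curve, I obtain
$$W(F(\mu),F(\nu))\le\int_0^1\|dF_{\mu_t}(v_t)\|_{L^2(F(\mu_t))}\,dt=\int_0^1\|v_t\|_{L^2(\mu_t)}\,dt=W(\mu,\nu).$$

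For the reverse inequality I would apply the very same argument to $F^{-1}$. By Corollary \ref{cor.prop}(4) the inverse is differentiable with $d(F^{-1})_{F(\mu)}=(dF_\mu)^{-1}$ up to a negligible set, and the inverse of a linear $L^2$-isometry is again a linear $L^2$-isometry; hence $F^{-1}$ is also a formal Riemannian isometry. The upper-bound step applied to $F^{-1}$ at the points $F(\mu),F(\nu)$ gives $W(\mu,\nu)=W\big(F^{-1}(F(\mu)),F^{-1}(F(\nu))\big)\le W(F(\mu),F(\nu))$, and combining the two inequalities yields $W(F(\mu),F(\nu))=W(\mu,\nu)$, i.e.\ $F$ is a metric isometry.

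The main obstacle I anticipate is bookkeeping about representatives rather than a genuine analytic difficulty: one must ensure that the norm-preserving representative and a representative realizing Definition \ref{dfn.diff} can be used \emph{at the same time} along a single curve. This is exactly what the first paragraph resolves, since the isometric representative is itself an admissible differential, so for the chosen geodesic both properties hold for a.e.\ $t$. All remaining ingredients (existence of the geodesic, the inequality distance $\le$ length, and the identity $\|v_t\|_{L^2(\mu_t)}=|\dot\mu_t|$ for tangent couples) are standard and require no further work.
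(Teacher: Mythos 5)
Your proof is correct and follows essentially the same route as the paper's: both arguments rest on the Benamou--Brenier formula (Theorem \ref{thm.bb}), a norm-preserving representative of $dF$, and the differentiability of $F^{-1}$ together with Corollary \ref{cor.prop} to identify $d(F^{-1})$ with $(dF)^{-1}$ up to a negligible set. The only difference is organizational: the paper equates the two infima in one step by showing that $(\mu_t,v_t)\mapsto(F(\mu_t),dF(v_t))$ is an energy-preserving correspondence between the competing a.c.\ couples, whereas you split the equality into two inequalities (distance $\le$ length applied to the image of a geodesic, then symmetrized via $F^{-1}$), which is the same idea repackaged.
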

\begin{proof}
	Let $F$ be a formal Riemannian isometry. Since by definition $F$ is bijective with differentiable inverse, every a.c. couple $(\mu_t,v_t)$ can be represented as the image of another a.c. couple $(\tilde{\mu}_t,\tilde{v}_t)$. Just choose $\tilde{\mu}_t:=F^{-1}(\mu_t)$ and $\tilde{v}_t:=dF^{-1}(v_t)$. Then, $\mu_t=F(\tilde{\mu}_t)$ and, using Corollary \ref{cor.prop}, $v_t=dF(\tilde{v}_t)$ almost everywhere.
	Conversely, every image of an a.c. couple, in the above sense, is an a.c. couple. Let $dF$ be a suitable representative.
	For $\mu,\nu\in W(M)$ and $\mu_t$ a.c. connecting them, we then have according to \ref{thm.bb}:
	\begin{eqnarray}\nonumber
	W(F(\mu),F(\nu)) &=& \inf_{\left(F(\mu_t),dF(v_t)\right)}\int_0^1 \sqrt{H_{F(\mu_t)}(dF(v_t),dF(v_t))}\ dt\\ \nonumber
	&=& \inf_{(\mu_t,v_t)}\int_0^1\sqrt{H_{\mu_t}(v_t,v_t)}\ dt\ =\  W(\mu,\nu).
	\end{eqnarray}
\end{proof}
It would be interesting to find out whether the converse implication of Proposition \ref{prop.infriem} is true as well, as it is the case for finite dimensional Riemannian manifolds.
%
%%%%%%%%%%%%%%%%%%%%%%%%%%%%%%%
\subsection{Convex mixing of maps}
%%%%%%%%%%%%%%%%%%%%%%%%%%%%%%%
%
In the examples, we so far have only been concerned with maps $F:W(M)\rightarrow W(N)$ which are induced by maps $f:M\rightarrow N$. Now one could wonder how a map $F$ which is not of this type could look like and what its differentiability properties are. As a first hint, we recall that whenever there is an $f:M\rightarrow N$ such that $F=f_\#$, then for $x\in M$ it is $F(\delta_x)=\delta_{f(x)}$. Based on this, we can construct the following examples.
\begin{example}\label{ex.push}
	\begin{itemize}
		\item If $F(\mu)=\mu_0$ is a constant map such that $\mu_0\neq \delta_{y_0},\ y_0\in N$, then there exists no map $f:M\rightarrow N$ such that $F=f_\#$. 
		In case $F(\mu)=\delta_{y_0}$, it is $F=f_\#$ with $f(x)=y_0\ \forall x\in M$.
		\item Let $F_{i}:W(M)\rightarrow W(N)$, $i=1,2$, such that they do not coincide on $\{\delta_x\mid x\in M\}$. The mixing of measures $F:=(1-\lambda)F_1+\lambda F_2$ for $0<\lambda<1$, then, cannot be a pushforward of measures.
	\end{itemize}	
\end{example}
\begin{remark}
Another way to think about this issue is the following: Every map $F:W(M)\rightarrow W(N)$ has a decomposition into a map $\tilde{F}:W(M)\rightarrow \PM(M\times N)$ with $\pi^1_\#\tilde{F}(\mu)=\mu$ and the map $\pi^2_\#:\PM(M\times N)\rightarrow W(N)$, i.e. $F=\pi^2_\#\circ\tilde{F}$. Certainly, $\tilde{F}$ is not unique, but one can always choose $\tilde{F}(\mu)=\mu\otimes F(\mu)$. Thus, $F$ is a pushforward with respect to a map $f$ if and only if there exists a map $\tilde{F}$ in such a way that $\tilde{F}(\mu)=(Id,f)_\#\mu$. According to \cite{Ambrosio2013}, Lemma 1.20 this is equivalent to saying that for every $\mu$ there exists a $\tilde{F}(\mu)$-measurable set $\Gamma\subset M\times N$ on which $\tilde{F}(\mu)$ is concentrated such that for $\mu$-a.e. $x$ there exists only one $y=f(x)\in M$ with $(x,y)\in\Gamma$. And in this case, $\tilde{F}(\mu)=(Id,f)_\#\mu$. 
\end{remark}
It is easy to see that any constant map $F:W(M)\rightarrow W(N),\ \mu\mapsto \mu_0$, is differentiable with $dF=0$ up to a negligible set. In the following we will investigate whether maps of the form $F=(1-\lambda)\ F_1+\lambda\ F_2$ are also differentiable. Let us start with asserting that the convex mixing of of a.c. maps is a.c..
\begin{prop}\label{lem.Fac}
	Let $F_i:W(M)\rightarrow W(N)$, $i=1,2$, be arbitrary a.c. maps. Then, for $0\leq\lambda\leq 1$, also $F:=(1-\lambda)\ F_1+\lambda\ F_2$	 is a.c.
\end{prop}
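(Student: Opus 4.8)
The plan is to unwind Definition \ref{def.ac} and reduce the claim to a single convexity estimate for the Wasserstein distance. First I would fix an arbitrary a.c.\ curve $\mu_t$ in $W(M)$. Since $F_1$ and $F_2$ are a.c., the curves $t\mapsto F_1(\mu_t)$ and $t\mapsto F_2(\mu_t)$ agree, off a Lebesgue-null set of times, with genuine a.c.\ curves $\alpha_t$ and $\beta_t$ in $W(N)$; discarding the union of the two exceptional null sets, I may assume both identities hold simultaneously off one null set. Because $\alpha_t,\beta_t\in\PM_2(N)$, the convex combination $\kappa_t:=(1-\lambda)\alpha_t+\lambda\beta_t$ again has finite second moment, so $\kappa_t\in W(N)$, and $\kappa_t=F(\mu_t)$ off that null set. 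It therefore suffices to show that the curve $\kappa_t$ is absolutely continuous in $W(N)$.

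The key ingredient is the joint convexity of the squared Wasserstein distance under linear interpolation of marginals: for $\sigma_1,\sigma_2,\tau_1,\tau_2\in\PM_2(N)$ and $\lambda\in[0,1]$,
$$W\big((1-\lambda)\sigma_1+\lambda\sigma_2,\ (1-\lambda)\tau_1+\lambda\tau_2\big)^2 \leq (1-\lambda)\,W(\sigma_1,\tau_1)^2+\lambda\,W(\sigma_2,\tau_2)^2.$$
I would prove this by gluing optimal plans: choosing optimal $\gamma_i\in Adm(\sigma_i,\tau_i)$, the combination $(1-\lambda)\gamma_1+\lambda\gamma_2$ lies in $Adm\big((1-\lambda)\sigma_1+\lambda\sigma_2,\,(1-\lambda)\tau_1+\lambda\tau_2\big)$, since pushing forward under the coordinate projections is linear in the plan; evaluating the transport cost of this admissible (hence suboptimal) plan gives exactly the right-hand side.

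Applying the estimate with $\sigma_1,\tau_1$ the values $\alpha_s,\alpha_t$ and $\sigma_2,\tau_2$ the values $\beta_s,\beta_t$, and then using $\sqrt{a+b}\leq\sqrt{a}+\sqrt{b}$ for $a,b\geq0$, I obtain
$$W(\kappa_s,\kappa_t)\leq\sqrt{1-\lambda}\,W(\alpha_s,\alpha_t)+\sqrt{\lambda}\,W(\beta_s,\beta_t).$$
Absolute continuity of $\alpha_t$ and $\beta_t$ supplies $f_1,f_2\in L^1(0,1)$ with $W(\alpha_s,\alpha_t)\leq\int_t^s f_1(r)\,dr$ and $W(\beta_s,\beta_t)\leq\int_t^s f_2(r)\,dr$ in the sense of \eqref{eq.ac}. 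Setting $f:=\sqrt{1-\lambda}\,f_1+\sqrt{\lambda}\,f_2\in L^1(0,1)$ then yields $W(\kappa_s,\kappa_t)\leq\int_t^s f(r)\,dr$, so $\kappa_t$ is a.c.; since $F(\mu_t)=\kappa_t$ off a null set, $F$ is a.c.\ by Definition \ref{def.ac}.

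I do not anticipate a genuine obstacle: once the convexity estimate is isolated, the argument is a short computation. The only points requiring care are the bookkeeping of null sets (the two redefinitions inherited from $F_1$ and $F_2$ must be merged into one), and the conceptual observation that it is the linear-combination curve $\kappa_t$, rather than the raw pointwise values $F(\mu_t)$, that one proves to be absolutely continuous.
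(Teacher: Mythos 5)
Your proposal is correct and follows essentially the same route as the paper: the paper factors the argument through its Lemma \ref{prop.conv} (convex mixtures of a.c.\ curves are a.c.), proved by exactly your key estimate --- convex combinations of admissible plans are admissible, giving $W^2$-convexity under mixing of marginals, followed by the $\sqrt{a+b}\leq\sqrt{a}+\sqrt{b}$ trick and the combined $L^1$ majorant $\sqrt{1-\lambda}\,g_1+\sqrt{\lambda}\,g_2$. Your explicit bookkeeping of the null sets inherited from Definition \ref{def.ac} is slightly more careful than the paper's own write-up, but it is a refinement of the same argument, not a different one.
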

For the proof of Proposition \ref{lem.Fac} we will use that already the convex mixing of of a.c. curves is a.c.
\begin{lemma}\label{prop.conv}
	Let $\mu_t^1$ and $\mu_t^2$ be a.c. curves. Then also the convex mixing $\mu_t:=(1-\lambda)\mu_t^1+\lambda\mu_t^2$ with $0\leq\lambda\leq 1$ is an a.c. curve.
\end{lemma}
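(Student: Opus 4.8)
The plan is to reduce the claim for the convex mixing $\mu_t := (1-\lambda)\mu_t^1 + \lambda\mu_t^2$ to an estimate on the Wasserstein distance $W(\mu_t,\mu_s)$ directly from the definition of absolute continuity, namely producing an $L^1$ control function. Since $\mu_t^1$ and $\mu_t^2$ are a.c., there exist $f_1,f_2\in L^1(0,1)$ with $W(\mu_t^i,\mu_s^i)\le\int_t^s f_i(r)\,dr$ for all $t\le s$ and $i=1,2$. The natural candidate control function for $\mu_t$ is $f:=(1-\lambda)f_1+\lambda f_2$, which is again in $L^1(0,1)$, so the whole argument hinges on showing
\begin{equation}\nonumber
W\big((1-\lambda)\mu_t^1+\lambda\mu_t^2,\ (1-\lambda)\mu_s^1+\lambda\mu_s^2\big)\ \le\ (1-\lambda)\,W(\mu_t^1,\mu_s^1)+\lambda\,W(\mu_t^2,\mu_s^2).
\end{equation}

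The key step, and the main obstacle, is therefore a convexity estimate for the Wasserstein distance under convex combinations of its two arguments. First I would fix $t\le s$ and choose optimal (or near-optimal) transport plans $\gamma_1\in Adm(\mu_t^1,\mu_s^1)$ and $\gamma_2\in Adm(\mu_t^2,\mu_s^2)$ realizing (up to $\varepsilon$) the respective Wasserstein distances. The crucial observation is that the convex combination $\gamma := (1-\lambda)\gamma_1 + \lambda\gamma_2$ is itself a probability measure on $M\times M$, and that pushing forward under the projections gives $\pi^1_\#\gamma = (1-\lambda)\mu_t^1+\lambda\mu_t^2 = \mu_t$ and $\pi^2_\#\gamma = (1-\lambda)\mu_s^1+\lambda\mu_s^2 = \mu_s$, so $\gamma\in Adm(\mu_t,\mu_s)$ is admissible for the mixed endpoints. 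Hence $\gamma$ is a competitor in the infimum defining $W(\mu_t,\mu_s)$.

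It then remains to estimate the cost of this competitor. By definition of $W=W_2$,
\begin{equation}\nonumber
W(\mu_t,\mu_s)^2 \le \int_{M\times M} d^2(x,y)\,d\gamma = (1-\lambda)\int d^2\,d\gamma_1 + \lambda\int d^2\,d\gamma_2 = (1-\lambda)W(\mu_t^1,\mu_s^1)^2 + \lambda\,W(\mu_t^2,\mu_s^2)^2,
\end{equation}
and since $\sqrt{\cdot}$ is concave while $a\mapsto a^2$ on the right can be controlled by Jensen's inequality (equivalently, by the triangle inequality $\sqrt{(1-\lambda)a^2+\lambda b^2}\le (1-\lambda)a+\lambda b$ for $a,b\ge 0$, which is just concavity of the square root applied to the convex combination), one obtains $W(\mu_t,\mu_s)\le(1-\lambda)W(\mu_t^1,\mu_s^1)+\lambda\,W(\mu_t^2,\mu_s^2)$. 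Combining this with the a.c. bounds for the two curves yields $W(\mu_t,\mu_s)\le\int_t^s\big[(1-\lambda)f_1(r)+\lambda f_2(r)\big]\,dr$, which is exactly the absolute continuity of $\mu_t$ with control function $f\in L^1(0,1)$.

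I expect the only genuinely delicate point to be the passage from the squared estimate back to the unsquared (distance) estimate, since $W_2$ is defined through the $L^2$-cost; one must invoke the elementary concavity inequality for the square root rather than naively adding distances. A minor technical remark is that one should verify $\mu_t\in\PM_2(M)$ throughout (so that the distances are finite), which follows since each $\mu_t^i$ has finite second moment and a convex combination of measures with finite second moment again has finite second moment.
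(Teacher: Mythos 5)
Your overall strategy coincides with the paper's: reduce to a static estimate, observe that $(1-\lambda)\gamma_1+\lambda\gamma_2$ is admissible for the mixed endpoints, and deduce the squared bound $W(\mu_t,\mu_s)^2\le(1-\lambda)W(\mu_t^1,\mu_s^1)^2+\lambda\,W(\mu_t^2,\mu_s^2)^2$; all of that is correct. The gap is in precisely the step you flagged as delicate. The inequality you invoke, $\sqrt{(1-\lambda)a^2+\lambda b^2}\le(1-\lambda)a+\lambda b$, is false: concavity of the square root gives exactly the \emph{reverse}, $\sqrt{(1-\lambda)a^2+\lambda b^2}\ge(1-\lambda)\sqrt{a^2}+\lambda\sqrt{b^2}=(1-\lambda)a+\lambda b$ (the quadratic mean dominates the arithmetic mean). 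Moreover, the conclusion you try to reach, namely joint convexity of $W_2$ itself, $W(\mu_t,\mu_s)\le(1-\lambda)W(\mu_t^1,\mu_s^1)+\lambda\,W(\mu_t^2,\mu_s^2)$, is genuinely false, not just unproven: take $M=\R$, $\lambda=\tfrac12$, endpoints $\mu_t^1=\delta_0$, $\mu_s^1=\delta_1$ and $\mu_t^2=\mu_s^2=\delta_0$. The mixed endpoints are $\delta_0$ and $\tfrac12\delta_0+\tfrac12\delta_1$, whose $W_2$-distance is $1/\sqrt{2}$, strictly larger than $(1-\lambda)\cdot 1+\lambda\cdot 0=\tfrac12$. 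So no refinement of the plan construction can deliver the bound you want, and your candidate control function $(1-\lambda)f_1+\lambda f_2$ does not work.

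The repair is small and is what the paper does: from the (correct) squared estimate, use subadditivity of the square root, $\sqrt{x+y}\le\sqrt{x}+\sqrt{y}$, to obtain $W(\mu_t,\mu_s)\le\sqrt{1-\lambda}\,W(\mu_t^1,\mu_s^1)+\sqrt{\lambda}\,W(\mu_t^2,\mu_s^2)$. Absolute continuity does not require the coefficients of the control function to be a convex combination, only that the control function be integrable, and $\sqrt{1-\lambda}\,f_1+\sqrt{\lambda}\,f_2\in L^1(0,1)$. With that single change your argument becomes the paper's proof.
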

\begin{proof}
	Since the $\mu^i_t$ are a.c. curves, for every $s\leq t\in(0,1)$
	there is a $g_i\in L^1(0,1)$ such that $$W\left(\mu^i_s,\mu^i_t\right)\leq \int_s^t g_i(\tau)\ d\tau.$$  
	Now let $\gamma_i\in Adm(\mu^i_s,\mu^i_t)$. Then $(1-\lambda)\gamma_1+\lambda\gamma_2\in Adm\left(\mu_s,\mu_t\right).$
	This is because for every measurable set $A$ and $\pi^i$ the projection onto the $i$-th component,
	\begin{eqnarray}\nonumber
	\pi^1_\#\left((1-\lambda)\gamma_1+\lambda\gamma_2\right)(A) & = & \left((1-\lambda)\gamma_1+\lambda\gamma_2\right)((\pi^1)^{-1}(A))\\\nonumber & = & (1-\lambda)\gamma_1((\pi^1)^{-1}(A))+\lambda\gamma_2((\pi^1)^{-1}(A))\\\nonumber
	& = & \left((1-\lambda)\mu^1_s+\lambda \mu^2_s\right)(A)\ =\ \mu_s(A).
	\end{eqnarray}
	Similarly for $\pi^2$. Then for $\widetilde{Adm}\left(\mu_s,\mu_t\right):=\{(1-\lambda)\gamma_1+\lambda\gamma_2\mid \gamma_i\in Adm(\mu^i_s,\mu^i_t)\}\subset Adm\left(\mu_s,\mu_t\right)$ we have
	\begin{eqnarray}\nonumber
	W(\mu_s,\mu_t)^2 & = & W\left((1-\lambda)\mu^1_s+\lambda \mu^2_s,(1-\lambda)\mu^1_t+\lambda \mu^2_t\right)^2\\\nonumber
	& \leq & \inf_{\pi\in\widetilde{Adm}(\mu_s,\mu_t)}\int d^2(x,y)\ d\pi(x,y) \\\nonumber
	& = & (1-\lambda)\ \inf_{\gamma_1\in Adm(\mu_s^1,\mu_t^1)}\int d^2(x,y)\ d\gamma_1 +\ \lambda\ \inf_{\gamma_2\in Adm(\mu_s^2,\mu_t^2)}\int d^2(x,y)\ d\gamma_2\\\nonumber
	& = & (1-\lambda)\ W(\mu^1_s,\mu^1_t))^2+ \lambda\ W(\mu^2_s,\mu^2_t)^2
	\end{eqnarray}
	This means that
	\begin{eqnarray}\nonumber
	W(\mu_s,\mu_t) & = & \sqrt{(1-\lambda)\ W(\mu^1_s,\mu^1_t))^2+ \lambda\ W(\mu^2_s,\mu^2_t)^2}\\\nonumber
	& \leq & \sqrt{(1-\lambda)}\ W(\mu^1_s,\mu^1_t)+ \sqrt{\lambda}\ W(\mu^2_s,\mu^2_t)\\\nonumber
	& \leq & \sqrt{(1-\lambda)} \int_s^t g_1(\tau)\ d\tau + \sqrt{\lambda}\int_s^t g_2(\tau)\ d\tau \\\nonumber
	& = & \int_s^t(\sqrt{(1-\lambda)}\ g_1+\sqrt{\lambda}\ g_2)\ d\tau.
	\end{eqnarray}
\end{proof}
Before continuing with the proof of Proposition \ref{lem.Fac} we give this immediate corollary from the proof of Lemma \ref{prop.conv}.
\begin{corollary} Let $(X,d)$ be a metric space and $\mu_{11},\mu_{12},\mu_{21},\mu_{22}$ four probability measures on $X$. Then, 
	$$W_p\left((1-\lambda) \mu_{11}+\lambda \mu_{12},(1-\lambda) \mu_{21}+\lambda \mu_{22}\right)\leq	\  \sqrt[p]{(1-\lambda)} W_p(\mu_{11},\mu_{21})+\sqrt[p]{\lambda} W_p(\mu_{12},\mu_{22}).$$
	%	\begin{eqnarray}\nonumber
	%	&  & W_p\left((1-\lambda)\ \mu^1_s+\lambda\ \mu^2_s,(1-\lambda)\ \mu^1_t+\lambda\ \mu^2_t\right)\\\nonumber
	%	& \leq & \sqrt[p]{(1-\lambda)}\ W_p(\mu^1_s,\mu^1_t)+\sqrt[p]{\lambda}\ W_p(\mu^2_s,\mu^2_t).
	%	\end{eqnarray}\qed	
\end{corollary}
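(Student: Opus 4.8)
The plan is to extract the required inequality directly from the computation already carried out in the proof of Lemma \ref{prop.conv}, noting that the four-measure statement is just the special case where the two a.c.\ curves are replaced by two fixed pairs of endpoints. First I would fix an arbitrary $p\in[1,\infty)$ and, for $i\in\{1,2\}$, choose optimal (or near-optimal) transport plans $\gamma_1\in Adm(\mu_{11},\mu_{21})$ and $\gamma_2\in Adm(\mu_{12},\mu_{22})$ realizing $W_p(\mu_{11},\mu_{21})$ and $W_p(\mu_{12},\mu_{22})$ respectively. The central observation, already established in Lemma \ref{prop.conv}, is that the convex combination $(1-\lambda)\gamma_1+\lambda\gamma_2$ is admissible between the mixed marginals $(1-\lambda)\mu_{11}+\lambda\mu_{12}$ and $(1-\lambda)\mu_{21}+\lambda\mu_{22}$; the marginal computation there goes through verbatim with the measures relabelled, since it only uses linearity of pushforward in the measure argument.

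Next I would bound the $p$-th Wasserstein distance of the mixtures by evaluating the transport cost of this particular admissible plan, which dominates the infimum:
\begin{equation}\nonumber
W_p\big((1-\lambda)\mu_{11}+\lambda\mu_{12},(1-\lambda)\mu_{21}+\lambda\mu_{22}\big)^p\leq\int d^p(x,y)\,d\big((1-\lambda)\gamma_1+\lambda\gamma_2\big).
\end{equation}
By linearity of the integral in the measure, the right-hand side splits as $(1-\lambda)\int d^p\,d\gamma_1+\lambda\int d^p\,d\gamma_2$, which equals $(1-\lambda)W_p(\mu_{11},\mu_{21})^p+\lambda W_p(\mu_{12},\mu_{22})^p$ once the $\gamma_i$ are taken to be the optimal plans. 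This reproduces exactly the squared-distance estimate of Lemma \ref{prop.conv} with $p=2$, now stated for general $p$ and for arbitrary fixed endpoint measures rather than points on a curve.

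Finally I would take $p$-th roots and apply the elementary inequality $(a+b)^{1/p}\le a^{1/p}+b^{1/p}$ for nonnegative $a,b$ (subadditivity of $t\mapsto t^{1/p}$ on $[0,\infty)$, valid for $p\ge 1$) to the two summands, with $a=(1-\lambda)W_p(\mu_{11},\mu_{21})^p$ and $b=\lambda W_p(\mu_{12},\mu_{22})^p$. This yields the stated bound $\sqrt[p]{1-\lambda}\,W_p(\mu_{11},\mu_{21})+\sqrt[p]{\lambda}\,W_p(\mu_{12},\mu_{22})$, since $a^{1/p}=\sqrt[p]{1-\lambda}\,W_p(\mu_{11},\mu_{21})$ and similarly for $b$. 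I do not anticipate a genuine obstacle here: the corollary is essentially a bookkeeping generalization of the inequality chain already displayed in Lemma \ref{prop.conv}, and the only point requiring a word of care is the replacement of the $p=2$ subadditivity step ($\sqrt{a+b}\le\sqrt a+\sqrt b$) by its general-$p$ analogue, which holds for all $p\ge 1$ by concavity of $t\mapsto t^{1/p}$ together with vanishing at the origin.
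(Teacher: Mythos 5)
Your proposal is correct and follows essentially the same route the paper intends: the corollary is extracted from the proof of Lemma \ref{prop.conv} by mixing admissible plans $(1-\lambda)\gamma_1+\lambda\gamma_2$, using linearity of the transport cost in the plan, and replacing the $p=2$ subadditivity step $\sqrt{a+b}\le\sqrt{a}+\sqrt{b}$ by the general inequality $(a+b)^{1/p}\le a^{1/p}+b^{1/p}$. Your parenthetical use of near-optimal plans is a sensible touch, since the statement is for a general metric space where the infimum need not be attained.
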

\begin{proof}[Proof of Proposition \ref{lem.Fac}]
	Let $\mu_t$ be an a.c. curve. Then by definition $F_i(\mu_t)$, $i=1,2$, are a.c. curves. From Lema \ref{prop.conv} we now know that also $F(\mu_t)$ is an a.c. curve.
\end{proof}
\begin{thm}\label{thm.convex}
	Let $F_i:W(M)\rightarrow W(N)$, $i=1,2$, be two differentiable maps. Then $F=(1-\lambda)\ F_1+ \lambda\ F_2$ is differentiable.
\end{thm}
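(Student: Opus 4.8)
The plan is to write down an explicit candidate for the differential built from $dF_1$ and $dF_2$, and then to check the conditions of Definition \ref{dfn.diff}; absolute continuity of $F$ is already provided by Proposition \ref{lem.Fac}. The cases $\lambda\in\{0,1\}$ are trivial, so assume $0<\lambda<1$. Fix $\mu\in W(M)$ and write $\nu:=F(\mu)$, $\nu_i:=F_i(\mu)$, so that $\nu=(1-\lambda)\nu_1+\lambda\nu_2$. Since $(1-\lambda)\nu_1\leq\nu$ and $\lambda\nu_2\leq\nu$, we have $\nu_1,\nu_2\ll\nu$; let $\rho_i:=\de\nu_i/\de\nu$ be the Radon--Nikodym derivatives, which satisfy $\rho_i\geq 0$, $\rho_1\leq 1/(1-\lambda)$, $\rho_2\leq 1/\lambda$, and $(1-\lambda)\rho_1+\lambda\rho_2=1$ holds $\nu$-a.e.

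The core difficulty is that $dF_{1,\mu}(v)\in T_{\nu_1}W(N)$ and $dF_{2,\mu}(v)\in T_{\nu_2}W(N)$ live in Hilbert spaces built from \emph{different} base measures and therefore cannot simply be added. The reweighting by $\rho_i$ is exactly what reconciles them: I define the ($v$-linear) map \[ \widehat{dF}_\mu(v):=(1-\lambda)\,\rho_1\,dF_{1,\mu}(v)+\lambda\,\rho_2\,dF_{2,\mu}(v), \] understood pointwise in $y\in N$, with the convention that the $i$-th summand vanishes on $\{\rho_i=0\}$ (legitimate because on $\{\rho_i>0\}$ the measures $\nu$ and $\nu_i$ are mutually absolutely continuous, so a $\nu_i$-a.e.\ defined field is $\nu$-a.e.\ defined there). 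As in Corollary \ref{cor.prop}, I then set $dF_\mu:=P^{F(\mu)}\circ\widehat{dF}_\mu$.

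Boundedness follows from the identity $(\rho_i)^2\,\de\nu=\rho_i\,\de\nu_i$ together with the bounds on $\rho_i$: for the first summand $\|\rho_1\,dF_{1,\mu}(v)\|_{L^2(\nu)}^2=\int_N\rho_1\,|dF_{1,\mu}(v)|^2\,\de\nu_1\leq\tfrac{1}{1-\lambda}\|dF_{1,\mu}(v)\|_{L^2(\nu_1)}^2$, and analogously for the second, so the triangle inequality in $L^2(\nu)$ gives \[ \|\widehat{dF}_\mu(v)\|_{L^2(\nu)}\leq\sqrt{1-\lambda}\,\|dF_{1,\mu}(v)\|_{L^2(\nu_1)}+\sqrt{\lambda}\,\|dF_{2,\mu}(v)\|_{L^2(\nu_2)}. \] Since $\|P^{F(\mu)}\|\leq 1$ and $dF_{1,\mu},dF_{2,\mu}$ are bounded, $dF_\mu$ is a bounded linear map into $T_{F(\mu)}W(N)$; in particular $\widehat{dF}_\mu(v)\in L^2(F(\mu))$, so the projection is defined. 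Applying the same inequality along a tangent couple $(\mu_t,v_t)$ and integrating in $t$ yields $\int_0^1\|\widehat{dF}_{\mu_t}(v_t)\|_{L^2(F(\mu_t))}\,\de t<\infty$, because each $(F_i(\mu_t),dF_{i,\mu_t}(v_t))$ is an a.c.\ couple with $L^1$-integrable norm.

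The decisive step is verifying that $(F(\mu_t),\widehat{dF}_{\mu_t}(v_t))$ solves the weak continuity equation. Testing against $\varphi\in\mathcal{C}^\infty_c(N\times(0,1))$, the identity $\rho_i^{\mu_t}\,\de F(\mu_t)=\de F_i(\mu_t)$ converts each weighted term back into an integral against $F_i(\mu_t)$, giving \[ \int_N g(\nabla\varphi,\widehat{dF}_{\mu_t}(v_t))\,\de F(\mu_t)=(1-\lambda)\!\int_N g(\nabla\varphi,dF_{1,\mu_t}(v_t))\,\de F_1(\mu_t)+\lambda\!\int_N g(\nabla\varphi,dF_{2,\mu_t}(v_t))\,\de F_2(\mu_t), \] while the $\partial_t\varphi$ term splits by the linearity $F(\mu_t)=(1-\lambda)F_1(\mu_t)+\lambda F_2(\mu_t)$. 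Hence the full weak continuity integral equals $(1-\lambda)$ times the (vanishing) continuity integral for $F_1$ plus $\lambda$ times that for $F_2$, so it is zero, and $(F(\mu_t),\widehat{dF}_{\mu_t}(v_t))$ is an a.c.\ couple. By Proposition \ref{prop.proj}, $(F(\mu_t),P^{F(\mu_t)}(\widehat{dF}_{\mu_t}(v_t)))=(F(\mu_t),dF_{\mu_t}(v_t))$ is then a tangent couple, which is exactly the requirement of Definition \ref{dfn.diff}. The only genuine obstacle is this reconciliation of vector fields over three distinct base measures; once the weights $\rho_i$ are introduced, both boundedness and the continuity equation reduce to combining the corresponding facts for $F_1$ and $F_2$.
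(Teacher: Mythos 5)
Your proof is correct, and in fact it constructs exactly the same accompanying vector field as the paper does, but by a genuinely different and technically lighter route. The paper's key step is Lemma \ref{lem.canonical}: it applies the Lebesgue decomposition to the two image curves, splitting $N$ into sets $A_t,B_t,C_t$ carrying the mutually singular parts and the common part $\beta_t$, defines the candidate field piecewise ($v_t$ on $A_t$, $w_t$ on $B_t$, the quotient $(\lambda v_t+(1-\lambda)\rho_t w_t)/(\lambda+(1-\lambda)\rho_t)$ on $C_t$), and verifies the $L^2$-bound and the continuity equation by a three-way case analysis; the proof of Theorem \ref{thm.convex} then checks $L^1$-integrability in $t$ and composes the resulting bounded linear map with $dF_1\oplus dF_2$ and the tangent-space projection. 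You bypass the Lebesgue decomposition entirely via the observation that the mixture dominates both components, $(1-\lambda)\nu_1\le\nu$ and $\lambda\nu_2\le\nu$, so that both image measures are automatically absolutely continuous with respect to $\nu=F(\mu)$; this packages the candidate field into the single formula $(1-\lambda)\rho_1\,dF_{1,\mu}(v)+\lambda\rho_2\,dF_{2,\mu}(v)$, and the identity $\rho_i\,\mathrm{d}\nu=\mathrm{d}\nu_i$ turns both the operator bound and the continuity-equation check into one-line computations, after which Proposition \ref{prop.proj} finishes exactly as in the paper. The two constructions coincide: where only $F_1(\mu)$ has mass one has $\rho_2=0$ and $(1-\lambda)\rho_1=1$, so your field is $dF_{1,\mu}(v)$, symmetrically on $\{\rho_1=0\}$, and on the overlap your formula reduces to the paper's quotient. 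What your route buys: no decomposition into singular parts, no case analysis, the explicit bound $\|\widehat{dF}_\mu\|\le\sqrt{1-\lambda}\,\|dF_{1,\mu}\|+\sqrt{\lambda}\,\|dF_{2,\mu}\|$, and an explicit justification of the a.e.-well-definedness of $\rho_i\,dF_{i,\mu}(v)$ (mutual absolute continuity of $\nu$ and $\nu_i$ on $\{\rho_i>0\}$), a point the paper leaves implicit. What the paper's route buys: its decomposition exhibits the geometry of where the two image measures overlap and where they do not, and its lemma is stated for an arbitrary accompanying field $w_t$ of the second curve rather than only for image fields of differentials. Both arguments share the same small omission: neither verifies Borel measurability in $t$ of the constructed family of fields (your densities $\rho_{i,t}$, the paper's $\rho_t$ and partition $A_t,B_t,C_t$ all depend on $t$), which Theorem \ref{them.ac} formally requires of an a.c. couple.
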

Since we have already seen that with the conditions of Theorem \ref{thm.convex} $F$ is a.c., as both $F_i$ are a.c., we know that $F$ maps a.c. curves to a.c. curves. We know further that along each of these a.c. image curves there has to be a tangent vector field. To find the tangent map, mapping curves of tangent vector fields along a.c. curves to the corresponding curves of tangent vector fields along the image a.c. curves, i.e. to prove the theorem, we first give a formula for a canonical image tangent vector field.
\begin{lemma}\label{lem.canonical}
	Let $F_i:W(M)\rightarrow W(N)$, $i=1,2$, be two differentiable maps. For an a.c. curve  $\gamma_t$ in $W(M)$, we define the a.c. curves $\mu_t:=F_1(\gamma_t)$, $\nu_t:=F_2(\gamma_t)$ and $\alpha_t:=\lambda \mu_t+(1-\lambda) \nu_t$ in $W(N)$. With the Lebesgue decompositiom theorem, the measures $\mu_t$ and $\nu_t$ give rise to unique	measures $\tau^\mu_t,\tau^\nu_t,\beta_t$ and Radon-Nykodym derivatives $\rho_t$ such that
	\begin{enumerate}
		\item For each $t$ the measures $\tau^\mu_t,\tau^\nu_t$ and $\beta_t$ are mutually
		singular: there exist Borel subsets $A_t,B_t,C_t$ that are pairwise disjoint with
		union $N$ such that $B_t$ and $C_t$ are nullsets for $\tau^\mu_t$, $A_t$ and
		$C_t$ are nullsets for $\tau^\nu_t$ and $A_t,B_t$ are nullsets for $\beta_t$.
		\item $\mu_t = \tau_t^\mu + \beta_t$
		\item $\nu_t=\tau_t^\nu + \rho_t \beta_t$
		\item $\rho_t$ is zero only on a nullset of $C_t$.
	\end{enumerate}
	If furthermore $v_t$ is a tangent vector field for $\mu_t$ and $w_t$ is an accompanying vector field for $\nu_t$, we can give the formula for a canonical accompanying vector field $u_t\in L^2(N,\alpha_t)$ for $\alpha_t$ as
	\begin{equation*}
	u_t(x):=
	\begin{cases}
	v_t(x); & x\in A_t\\
	w_t(x); & x\in B_t\\
	\frac{\lambda v_t(x)+ \rho_t(1-\lambda)w_t(x)}{\lambda+(1-\lambda)\rho_t}; & x\in C_t.
	\end{cases}
	\end{equation*}
\end{lemma}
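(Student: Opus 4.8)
The plan is to recognize the piecewise formula for $u_t$ as nothing but the Radon--Nikodym density of a single \emph{mixed flux} with respect to $\alpha_t$, and then to verify the two defining properties of an accompanying vector field---the weak continuity equation and $L^1$-integrability of its norm---by exploiting the linearity of \eqref{eq.wcont} and the convexity of $\abs{\cdot}^2$. Concretely, set $J_t:=\lambda\, v_t\mu_t+(1-\lambda)\, w_t\nu_t$, a vector-valued measure on $N$. Since $v_t$ is (in particular) an accompanying field for $\mu_t$ and $w_t$ one for $\nu_t$, both $(\mu_t,v_t)$ and $(\nu_t,w_t)$ solve \eqref{eq.wcont}; the claim will be that $u_t:=dJ_t/d\alpha_t$ is a well-defined element of $L^2(\alpha_t)$ coinciding $\alpha_t$-a.e.\ with the stated case distinction, and that $(\alpha_t,u_t)$ is an a.c.\ couple. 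Note that, since the conclusion only asks for an \emph{accompanying} field, no minimality of $u_t$ is required.

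First I would produce the joint decomposition (items 1--4). Applying the Lebesgue--Radon--Nikodym theorem to $\nu_t$ relative to $\mu_t$ gives $\nu_t=\rho_t\mu_t+\nu_t^\perp$ with $\nu_t^\perp\perp\mu_t$; choosing a Borel set $B_t$ carrying $\nu_t^\perp$ with $\mu_t(B_t)=0$ and splitting its complement into $A_t:=\{\rho_t=0\}$ and $C_t:=\{\rho_t>0\}$, one sets $\tau^\mu_t:=\mu_t|_{A_t}$, $\tau^\nu_t:=\nu_t|_{B_t}=\nu_t^\perp$ and $\beta_t:=\mu_t|_{C_t}$. This yields exactly the mutual singularity asserted in item 1, the identities $\mu_t=\tau^\mu_t+\beta_t$ and $\nu_t=\tau^\nu_t+\rho_t\beta_t$ of items 2--3, while item 4 holds by construction of $C_t$; uniqueness is inherited from that of the Lebesgue decomposition.

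Next I would verify that $u_t\alpha_t=J_t$ by restricting to the three pieces: on $A_t$ one has $\nu_t=0$, so $J_t=\lambda v_t\mu_t$ and $\alpha_t=\lambda\mu_t$, giving $u_t=v_t$; on $B_t$ symmetrically $u_t=w_t$; and on $C_t$, using $\mu_t=\beta_t$ and $\nu_t=\rho_t\beta_t$, one gets $J_t=(\lambda v_t+(1-\lambda)\rho_t w_t)\beta_t$ and $\alpha_t=(\lambda+(1-\lambda)\rho_t)\beta_t$, whence the quotient in the third case. The weak continuity equation for $(\alpha_t,u_t)$ is then immediate from linearity: for every $\varphi\in\mathcal{C}^\infty_c((0,1)\times N)$,
\[
\int_0^1\!\!\int_N\Big(\partial_t\varphi+g(\nabla\varphi,u_t)\Big)\,d\alpha_t\,dt
=\lambda\int_0^1\!\!\int_N\big(\partial_t\varphi+g(\nabla\varphi,v_t)\big)\,d\mu_t\,dt
+(1-\lambda)\int_0^1\!\!\int_N\big(\partial_t\varphi+g(\nabla\varphi,w_t)\big)\,d\nu_t\,dt=0,
\]
because $g(\nabla\varphi,u_t)\,d\alpha_t=g(\nabla\varphi,\cdot)\,dJ_t$ and each right-hand integral vanishes by \eqref{eq.wcont}.

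Finally I would establish $\|u_t\|_{L^2(\alpha_t)}\in L^1(0,1)$. On $A_t$ and $B_t$ the contributions are bounded by $\lambda\|v_t\|_{L^2(\mu_t)}^2$ and $(1-\lambda)\|w_t\|_{L^2(\nu_t)}^2$; on $C_t$, since $u_t$ is the convex combination of $v_t$ and $w_t$ with weights $\lambda/(\lambda+(1-\lambda)\rho_t)$ and $(1-\lambda)\rho_t/(\lambda+(1-\lambda)\rho_t)$, convexity of $\abs{\cdot}^2$ gives $\abs{u_t}^2(\lambda+(1-\lambda)\rho_t)\le \lambda\abs{v_t}^2+(1-\lambda)\rho_t\abs{w_t}^2$, so that after integrating against $\beta_t$ the $C_t$-term is again controlled by $\lambda\|v_t\|^2_{L^2(\mu_t)}+(1-\lambda)\|w_t\|^2_{L^2(\nu_t)}$. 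Altogether $\|u_t\|_{L^2(\alpha_t)}\le\sqrt{\lambda}\,\|v_t\|_{L^2(\mu_t)}+\sqrt{1-\lambda}\,\|w_t\|_{L^2(\nu_t)}$, which lies in $L^1(0,1)$ because $\mu_t,\nu_t$ are a.c.\ (Theorem \ref{them.ac}). I expect the main obstacle to be not the algebra above but measurability in $t$: one must check that $A_t,B_t,C_t,\rho_t$ can be chosen so that $(t,x)\mapsto u_t(x)$ is a Borel family, so that $(\alpha_t,u_t)$ genuinely qualifies as an a.c.\ couple; this requires a joint-measurability (measurable-selection) argument for the Lebesgue decomposition in the parameter $t$.
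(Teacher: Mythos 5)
Your proposal is correct and follows essentially the same route as the paper: both arguments reduce the continuity equation for $(\alpha_t,u_t)$, by linearity, to the flux identity $u_t\,d\alpha_t=\lambda v_t\,d\mu_t+(1-\lambda)w_t\,d\nu_t$, verify that identity separately on $A_t$, $B_t$, $C_t$, and bound $\|u_t\|_{L^2(\alpha_t)}$ piecewise (your convexity estimate is a slightly sharper version of the paper's factor-$2$ bound on $C_t$). Beyond that, you supply two things the paper's proof leaves implicit --- the explicit construction of the decomposition in items 1--4 via the Lebesgue--Radon--Nikodym theorem, and the $L^1(0,1)$-integrability of $\|u_t\|_{L^2(\alpha_t)}$, which the paper only establishes later in the proof of Theorem \ref{thm.convex} --- and your closing caveat about joint Borel measurability in $t$ flags a genuine subtlety that the paper passes over silently.
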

\begin{proof}
	Since $\frac{d}{dt}\alpha_t$ is linear in $\alpha_t$, the
	continuity equation for $(\alpha_t,u_t)$ is satisfied if and only if
	\begin{equation}\label{eq:test}
	\int_0^T \int_N h(\nabla \phi(x,t),u_t(x)) d\alpha_t\,dt = \int_0^T (\int_N
	h(\nabla\phi(x,t),v_t(x)) \lambda d\mu_t + h(\nabla\phi(x,t),w_t(x))(1-\lambda) d\nu_t)\,dt
	\end{equation}
	for all $\varphi\in C_c^\infty\left((0,T)\times N\right)$ and $u_t\in L^2(N,\alpha_t)$.
	
	Let us first check that $u_t\in L^2(TN,\alpha_t)$. Since $N=A_t\dot\cup\, B_t\dot\cup\, C_t$, the condition can be checked separately on $A_t, B_t$ and $C_t$.
	First, 
	\begin{equation*}
	\int_{A_t} |u_t(x)|^2 d\alpha_t = \int_{A_t} |v_t(x)|^2 \lambda d\mu_t <\infty,
	\end{equation*}
	and similarly for $B_t$. To check the situation on $C_t$, we start with 
	\begin{eqnarray}\label{eq:L2}
	\int_{C_t} |u_t(x)|^2 d\alpha_t & = &\int_{C_t} \frac{ |\lambda v_t(x) + (1-\lambda)\rho_t
		w_t(x)|^2}{(\lambda + (1-\lambda)\rho_t)^2} (\lambda d\beta_t + (1-\lambda)\rho_t
	d\beta_t)\\ \label{eq:L22}
	 & \le&  2\int_{C_t} \left(\frac{\lambda}{\lambda+ (1-\lambda)\rho_t} |v_t(x)|^2 \lambda +
	\frac{(1-\lambda)\rho_t}{\lambda+(1-\lambda)\rho_t} |w_t(x)|^2 (1-\lambda)\rho_t \,\right)d\beta_t.
	\end{eqnarray}
	Now it holds that $\frac{\lambda}{\lambda+(1-\lambda)\rho_t}\le 1$ and
	$\int_{C_t} |v_t(x)|^2 d\beta_t <\infty$ (as one summand in the $L^2$-norm of $v_t$
	with respect to $\mu_t$). Similarly for the second summand, so we see that the
	whole expression in Equation \eqref{eq:L22} is finite.
	
	Let us now check Equation \eqref{eq:test}. This can be done separately for (almost all) $t\in [0,T]$ and again separately for the integrals over $A_t,B_t,C_t$.
	On $A_t$, Equation \eqref{eq:test} holds because here $u_t=v_t$ and $\alpha_t=\lambda
	\mu_t=\lambda \tau^\mu_t$, whereas $\nu_t(A_t)=0$. A similar argument works on $B_t$. 
	On $C_t$, formally,
	\begin{equation*}
	u_t d\alpha_t = \frac{\lambda v_t +(1-\lambda)\rho_t w_t}{\lambda +(1-\lambda)
		\rho_t} d(\lambda \beta_t+ (1-\lambda)\rho_t \beta_t) = \left(\lambda v_t +
	(1-\lambda)\rho_t w_t\right) d\beta_t = v_t \lambda d\mu_t + w_t (1-\lambda )
	d\nu_t.
	\end{equation*}
\end{proof}
\begin{proof}[Proof of Theorem \ref{thm.convex}]
	First, we need to check that $u_t$ is indeed an accompanying vector field for $\alpha_t$, i.e. that $\|u_t\|_{L^2(\alpha_t)}\in L^1(0,1)$, so that its projection onto the tangent spaces is indeed a tangent vector field along $\alpha_t$.
	
	Since $N=A_t\dot\cup\, B_t\dot\cup\, C_t$, 
	\begin{eqnarray}\nonumber
	\|u_t\|_{L^2(\alpha_t)} &=&\|\left.u_t\right|_{A_t}+ \left.u_t\right|_{B_t}+ \left.u_t\right|_{C_t}\|_{L^2(\alpha_t)}\leq \|\left.u_t\right|_{A_t}\|_{L^2(\alpha_t)}+\|\left.u_t\right|_{B_t}\|_{L^2(\alpha_t)}+\|\left.u_t\right|_{C_t}\|_{L^2(\alpha_t)} \\ \label{eq.10}
	&\leq& \sqrt{\lambda}\ \|v_t\|_{L^2(\mu_t)} + \sqrt{(1-\lambda)}\ \|w_t\|_{L^2(\nu_t)} + \|\left.u_t\right|_{C_t}\|_{L^2(\alpha_t)}.
	\end{eqnarray}
	We know of the first two summands in Equation \eqref{eq.10} that their $L^1(0,1)$-norm is finite, as we demanded $v_t$ and $w_t$ to be accompanying vector fields. It thus suffices to show the finiteness of the $L^1(0,1)$-norm of the last summand. Here, we find with $\bar{\rho}_{t,\lambda}:=\frac{1}{\lambda+(1-\lambda)\rho_t}$,
	$$\|\left.u_t\right|_{C_t}\|_{L^2(\alpha_t)}=\|\left.(\lambda v_t+(1-\lambda)\rho_t w_t)\right|_{C_t}\|_{L^2(\bar{\rho}_{t,\lambda} d\beta_t)} \leq \|\lambda \left.v_t\right|_{C_t}\|_{L^2(\bar{\rho}_{t,\lambda} d\beta_t)} + \|(1-\lambda)\rho_t \left.w_t\right|_{C_t}\|_{L^2(\bar{\rho}_{t,\lambda} d\beta_t)}.$$
	We have encountered both of those last summands in the proof Lemma \ref{lem.canonical} and analogously to there (where we have concluded the finiteness of the $L^2$-norm), we can now conclude the finiteness of the $L^1(0,1)$-norm of these summands and thus the claim that $\|u_t\|_{L^2(\alpha_t)}\in L^1(0,1)$.
	
	Finally, observe that the construction of $u_t$ from $(v_t,w_t)$ is a linear
	and bounded map $A_\lambda\colon L^2(M,\mu_t)\oplus L^2(M,\nu_t)\to
	L^2(M,\alpha_t)$, as the formula in the proof of the $L^2$-property of $u_t$ shows. Composition of $A_\lambda$ with $dF\oplus dG$ and the projection to the tangent space then defines the derivative of $\lambda F+(1-\lambda) G$ and shows that this convex combination is differentiable.
\end{proof}
%
%\begin{remark}
%	The preceding examination is one way to see that the notion of differentiable maps that we have built on top of the formal differentiable structure on $W(M)$ is not compatible with the convex structure of $W(M)$ other than in special cases.
%	(The map $F=(1-\lambda)\ F_1+\lambda\ F_2$ is still absolutely continuous, but this is not clear for maps $F$ with the property $F(\lambda\mu+(1-\lambda)\nu)=\lambda F(\mu)+(1-\lambda)F(\nu)$.) This is different from considering $\PM(M)$ as a convex subspace of the vector space of signed measures, where one can apply convenient calculus \cite{Kriegl_1997}. So pushing the formal calculus further in the manner that we have, one can see how it deviates from what we assume calculus has to provide, even though it matches some other formulas pretty well. 
%\end{remark} 

%%% Local Variables:
%%% mode: latex
%%% TeX-master: main
%%% End:

\appendix
%ANHANG
%
%------------------------------------
\section{Disintegration theorem}\label{A.dis}
%------------------------------------
%
To be able to prove Theorem \ref{thm.ac}, we rely on the following statement (see \cite{GFlows}).
\begin{thm}
Let $X$ and $Y$ be Radon spaces. Furthermore let $\mu\in\PM(X)$ and $f:X\rightarrow Y$ be a measurable map.
Then there exists a $f_\#\mu$-almost everywhere uniquely determined family of probability measures $\{\mu_y\}_{y\in Y}$ on $X$ such that 
\begin{itemize}
	\item for every measurable set $A\subset X$ the map $y\mapsto \mu_y(A)$ is measurable,
	\item $\mu_y(X\setminus f^{-1}(y))=0$ for $f_\#\mu$-almost every $y\in Y$,
	\item for every measurable function $g:X\rightarrow [0,\infty]$ it is
	      \begin{equation*}
	      \int_X g(x)\ d\mu(x) = \int_Y\int_{f^{-1}({y})}g(x)\ d\mu_y(x)df_\#\mu(y).
	      \end{equation*}
\end{itemize}
\end{thm}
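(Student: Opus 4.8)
The plan is to manufacture the fibre measures $\mu_y$ from Radon--Nikodym densities indexed by a countable algebra, and then to upgrade these finitely additive set functions to genuine measures, the Radon hypothesis being precisely what powers this upgrade. Write $\nu:=f_\#\mu$ for the image measure on $Y$. For every Borel set $A\subset X$ form the finite measure $\mu_A$ on $Y$ by $\mu_A(B):=\mu\bigl(A\cap f^{-1}(B)\bigr)$. Since $\nu(B)=0$ forces $\mu(f^{-1}(B))=0$ and hence $\mu_A(B)=0$, we have $\mu_A\ll\nu$, and the Radon--Nikodym theorem supplies a density $g_A:=\tfrac{d\mu_A}{d\nu}\in L^1(\nu)$, determined up to a $\nu$-null set. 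The heuristic is $\mu_y(A)=g_A(y)$; the whole task is to fix versions of the $g_A$ that assemble, for $\nu$-a.e.\ $y$, into an honest probability measure concentrated on the fibre $f^{-1}(y)$.

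First I would fix a countable algebra $\mathcal{A}\subset\B(X)$ generating $\B(X)$ (available since a Radon space is separable metrizable), chosen so as to contain an exhausting sequence of compact sets $K_1\subset K_2\subset\cdots$ with $\mu(K_m)\uparrow 1$, which tightness of $\mu$ provides. The relations $g_\emptyset=0$, $g_X=1$, $g_A\ge 0$, $g_{A\cup A'}=g_A+g_{A'}$ for disjoint $A,A'\in\mathcal{A}$, and $g_{K_m}\uparrow 1$ (the last because $\int_Y g_{K_m}\,d\nu=\mu(K_m)\uparrow 1$) are each $\nu$-a.e.\ statements; as $\mathcal{A}$ is countable I delete the union $N_0$ of the corresponding $\nu$-null sets, still $\nu$-null. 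For each $y\notin N_0$ the map $\lambda_y\colon A\mapsto g_A(y)$ is then a nonnegative, finitely additive set function on $\mathcal{A}$ of total mass one.

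The crux, and the step I expect to be the main obstacle, is to promote $\lambda_y$ from finite to countable additivity on $\mathcal{A}$ for $\nu$-a.e.\ $y$, i.e.\ to continuity at $\emptyset$, so that Carath\'eodory extension yields a Borel probability measure $\mu_y$ with $\mu_y|_{\mathcal{A}}=\lambda_y$. This is exactly where the Radon hypothesis is indispensable: the compact sets $K_m$, now seen fibrewise through $\lambda_y(K_m)=g_{K_m}(y)\uparrow 1$, furnish a compact approximating class, and the standard compact-class criterion forces $\sigma$-additivity. Equivalently, one may reduce by tightness and the Borel isomorphism theorem to the case $X\subset\R$ and build $\mu_y$ from the conditional distribution functions $q\mapsto F_q(y):=g_{X\cap(-\infty,q]}(y)$, whose monotonicity, right-continuity and normalization survive off a single $\nu$-null set. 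For $y\in N_0$ I set $\mu_y$ to a fixed probability measure, which is irrelevant modulo $\nu$-null sets.

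It remains to read off the conclusions and uniqueness. Measurability of $y\mapsto\mu_y(A)$ holds by construction for $A\in\mathcal{A}$ and propagates to all Borel $A$ by a Dynkin argument. The integral identity is, for $g=\mathbf{1}_A$ with $A\in\mathcal{A}$, the defining relation $\mu_A(B)=\int_B g_A\,d\nu$; it extends to simple functions by linearity and to measurable $g\ge 0$ by monotone convergence. For concentration, note that for $C$ in a countable generating algebra of $Y$ one has $g_{f^{-1}(C)}=\mathbf{1}_C$ $\nu$-a.e.\ (directly from $\mu_{f^{-1}(C)}(B)=\nu(C\cap B)$), whence $\mu_y(f^{-1}(C))=\mathbf{1}_C(y)$ for $\nu$-a.e.\ $y$; since $\{y\}$ is the intersection of the countably many such $C$ containing it, the corresponding full-measure fibres intersect to give $\mu_y(f^{-1}(y))=1$, i.e.\ $\mu_y(X\setminus f^{-1}(y))=0$, for $\nu$-a.e.\ $y$. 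Finally, if two families $\{\mu_y\}$ and $\{\mu_y'\}$ both satisfy the conclusion then for each $A\in\mathcal{A}$ both $y\mapsto\mu_y(A)$ and $y\mapsto\mu_y'(A)$ are versions of $\tfrac{d\mu_A}{d\nu}$, hence agree $\nu$-a.e.; intersecting over the countable $\mathcal{A}$ gives $\mu_y=\mu_y'$ for $\nu$-a.e.\ $y$.
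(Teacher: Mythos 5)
First, a remark on the comparison: the paper does not prove this theorem at all --- it is stated in the appendix with a citation to the literature (\cite{GFlows}) --- so your argument can only be measured against the standard proofs, not against anything in the paper. Your overall strategy (Radon--Nikodym densities $g_A=\tfrac{d\mu_A}{d\nu}$ indexed by a countable generating algebra, deletion of countably many null sets, promotion of the finitely additive fibre functions $\lambda_y$ to genuine measures, then a Dynkin argument for measurability and the integral identity, the fibre-concentration argument via a countable base of $Y$, and a.e.\ uniqueness) is exactly the classical route, and the first, third and fourth stages of your sketch are correct as written.

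The gap is at the step you yourself single out as the crux. Compact sets $K_m\in\mathcal{A}$ with $\lambda_y(K_m)\uparrow 1$ give inner compact approximation only of the \emph{total space}, and that is not enough to invoke the compact-class criterion: that criterion requires inner approximation by members of the compact class of \emph{every} element of the algebra, $\lambda(A)=\sup\{\lambda(K)\colon \mathcal{K}\ni K\subset A\}$, not merely of $X$. Concretely, take $X=[0,1]$, so that compact exhaustion is trivially satisfied with $K_m=X\in\mathcal{A}$ and $\lambda(K_m)=1$; let $\mathcal{A}$ be the countable algebra generated by the intervals $[0,q]$ with $q$ rational, and define $\lambda$ by Stieltjes increments of the nondecreasing function $F(q)=0$ for $q\le 1/2$, $F(q)=1$ for $q>1/2$. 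This $\lambda$ is a finitely additive probability on $\mathcal{A}$ giving full mass to the compact set $X$, yet it is not $\sigma$-additive: writing $(1/2,1]=\bigsqcup_n (q_{n+1},q_n]$ with rationals $q_n\downarrow 1/2$ gives $\sum_n\lambda\left((q_{n+1},q_n]\right)=0\neq 1=\lambda\left((1/2,1]\right)$. So a finitely additive limit object of the kind you construct can ``leak mass'' even on a compact space, and your argument as written does not exclude this for the $\lambda_y$. The repair is standard and uses precisely the tools you already deploy, applied per set rather than only to $X$: since $\mu$ is Radon, for each of the countably many $A\in\mathcal{A}$ choose an increasing sequence of compacts $K_{A,m}\subset A$ with $\mu(A\setminus K_{A,m})\to 0$, adjoin all the $K_{A,m}$ to $\mathcal{A}$ (still countable), and observe that $g_{K_{A,m}}\uparrow g_A$ holds $\nu$-a.e.\ by the same monotone-convergence argument you used for $g_{K_m}\uparrow 1$. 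After removing one further countable union of null sets, each $\lambda_y$ is inner regular along a compact class contained in its domain, and only then does the compact-class criterion deliver $\sigma$-additivity. (Your alternative route via a Borel isomorphism into $\R$ and conditional distribution functions has a parallel subtlety you gloss over: the image of $X$ need not be Borel in $\R$, so concentration of the constructed measures on $X$ must again be forced through the compacts $K_m$, whose injective Borel images are Borel by Lusin--Suslin.)
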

This means in particular that any $\mu\in\PM(X\times Y)$ whose first marginal $\nu$ is given can be represented in this disintegrated way. 

On the other hand, whenever there is a measurable (in the sense of the first item above) family $\mu_x\in\PM(Y)$ given, for any $\nu\in\PM(X)$ the following formula defines a unique measure $\mu\in\PM(X\times Y)$:
\begin{equation*}
\mu(f) = \int_X\left(\int_Y f(x,y)\ d\mu_x(y)\right)\ d\nu(x),
\end{equation*}
with $f:X\times Y\rightarrow\R$ being a nonnegative measurable function. In this sense, disintegration can be seen as an opposite procedure to the construction of a product measure. 

%%%%%%%%%%%%%%%
%%%%%%%%%%%%%%%
%%%%%%%%%%%%%%%
%%%%%%%%%%%%%%%\input{Kapitel/Anhang}

\bibliographystyle{alpha}       % Set the bibliography style to AMS plain alphabetized. (Can use ``amsalpha'', ``abbrv'', "apalike", acm)
\bibliography{da}

\end{document}